\theoremstyle{plain}
\newtheorem{theorem}{Theorem}[section]
\newtheorem{lemma}[theorem]{Lemma}
\newtheorem{definition}[theorem]{Definition}
\newtheorem{proposition}[theorem]{Proposition}
\newtheorem{corollary}[theorem]{Corollary}
\newtheorem{remark}[theorem]{Remark}
\DeclareMathOperator{\pbw}{pbw}
\DeclareMathOperator{\poly}{poly}
\DeclareMathOperator{\End}{End}
\DeclareMathOperator{\id}{id}
\DeclareMathOperator{\Gr}{Gr}
\DeclareMathOperator{\sym}{sym}
\DeclareMathOperator{\un}{\mathscr{A}}
\DeclareMathOperator{\deux}{\mathscr{B}}
\DeclareMathOperator{\trois}{\mathscr{D}}
\DeclareMathOperator{\quatre}{\mathscr{C}}
\newcommand{\M}{{\mathcal{M}}}
\newcommand{\EE}{{\mathcal{E}}}
\newcommand{\px}{\partial_x}
\newcommand{\lie}[2]{\left[#1,#2\right]}
\newcommand{\contraction}[2]{\left\langle#1\middle| #2\right\rangle}
\newcommand{\dual}{^{\vee}}
\newcommand{\SM}{\hat{S}(T\dual_{\M})}
\newcommand{\Sm}{\hat{S}(T\dual_{M})}
\newcommand{\inv}{^{-1}}
\newcommand{\cntn}{\nabla^\lightning}
\newcommand{\coder}{d^\nabla}
\newcommand{\DD}{D}
\newcommand{\nP}{{\Theta^\nabla}}
\newcommand{\nA}{{\Xi^\nabla}}
\newcommand{\Anabla}{{A^\nabla}}
\newcommand{\ds}{d^{\cntn}}
\newcommand{\abs}[1]{\left|#1\right|}
\newcommand{\degree}[1]{\abs{#1}}
\newcommand{\truncation}[1]{\tau_{#1}}
\newcommand{\sections}[1]{\Gamma\left(#1\right)}
\newcommand{\enveloping}[1]{\mathcal{U}(#1)}
\newcommand{\ZZ}{\mathbb{Z}}
\newcommand{\NN}{\mathbb{N}}
\newcommand{\NO}{\NN_0}
\newcommand{\xto}{\xrightarrow}
\newcommand{\perturbation}{\partial}
\begin{document}

\title{Formal exponential map for graded manifolds}
\author{Hsuan-Yi Liao} 
\address{Department of Mathematics, Pennsylvania State University}
\email{hul170@psu.edu}

\author{Mathieu Stiénon}
\address{Department of Mathematics, Pennsylvania State University}
\email{stienon@psu.edu}

\begin{abstract}
We introduce, for every $\mathbb{Z}$-graded manifold, a formal exponential map defined in a purely algebraic way and study its properties. As an application, we give a simple new 
construction of a Fedosov type resolution of the algebra of smooth functions of $\mathbb{Z}$-graded manifolds and we extend the Emmrich--Weinstein theorem to the context of $\mathbb{Z}$-graded manifolds.
\end{abstract}

\subjclass[2010]{16S30, 16W25, 16W70, 17B35, 17B66, 18G10}
\keywords{connection, formal exponential map, differential graded manifold, Dolgushev--Fedosov resolution}
\thanks{This work was partially supported by  the National Science Foundation [DMS1101827].}

\maketitle

\section*{Introduction}

We introduce, for every $\ZZ$-graded manifold, a formal exponential map defined in a purely algebraic way and advocate for its use in applications.

Although the geodesic exponential map $\exp:T_M\to M\times M$ 
associated to an affine  connection $\nabla$ does not transpose 
in a straightforward way to the graded manifold context, 
its fiberwise infinite-order jet evaluated along the zero section of $T_M$ 
admits a purely algebraic description which does carry over to the $\ZZ$-graded context. 
We prove that the resulting map is an algebraically well-defined isomorphism 
of \emph{co}algebras $\pbw:\Gamma\big(S(T_\M)\big)\to\mathcal{D}(\M)$,
where $\mathcal{D}(\M)$ denotes the associative algebra of differential operators 
on $\M$, called ``\emph{formal exponential map}.'' 
This map may be considered as a replacement for the geodesic exponential map 
of classical differential geometry. 
For instance, the formal exponential map  was exploited recently in relation with 
the Atiyah class of dg-manifolds, which is the obstruction class of the existence 
of dg-compatible connections \cite{MR3319134}. 
Moreover, it plays an important role in
the construction of associated $L_\infty$-algebras~\cite{MR3319134,1408.2903}.
We refer the reader to Definition~\ref{Tuileries} for the precise definition 
of the formal exponential map. 

Applying the iterative technique conceived by Fedosov~\cite{MR1293654} 
for the deformation quantization~\cite{MR0496157,MR0496158} of symplectic manifolds, 
Emmrich \& Weinstein~\cite{MR1327535} 
constructed, 
for every smooth manifold $M$, a flat connection 
on the completed symmetric tensor algebra $\Sm$ of the cotangent bundle of $M$.
Furthermore, they proved, once again by the Fedosov iterative method, that
for any smooth function $f$ on $M$, there exists a unique 
flat section of $\Sm$ whose term of degree $0$ (for the natural
graduation of $\Sm$ determined by the symmetric  tensor power)
is equal to $f$. Doing so, they obtained an augmentation map 
$\tau: C^\infty(M) \to \Omega^0(M,\hat{S}(T\dual_M))$.
Moreover, Emmrich \& Weinstein~\cite{MR1327535}
proved that this map $\tau$ constructed by the Fedosov iterative method
does coincide with the fiberwise infinite-order jet 
along the zero section of $T_M$ of the classical geodesic exponential map
associated to an affine connection on the manifold $M$. Their proof
resorted to a complicated argument involving Ehresmann connections
on analytic manifolds. 
One application of our formal exponential map is 
a direct and much more transparent proof of the Emmrich--Weinstein theorem.
Indeed, we give a simple proof of an extension of the Emmrich--Weinstein theorem
to $\ZZ$-graded manifolds, of which the classical Emmrich--Weinstein theorem 
is a special case.  

First, we extend the Dolgushev construction of flat connections by Fedosov's 
iterative method to the realm of $\ZZ$-graded manifolds. 
Next, we show that, for a graded manifold $\M$, the flat connection $D$ on $\SM$ 
constructed by this iteration procedure \`a la Fedosov starting from a torsionfree 
connection $\nabla$ on $T_\M$ can be recovered in a straightforward manner 
by making use of the `formal exponential map' 
$\pbw:\Gamma\big(S(T_\M)\big)\to\mathcal{D}(\M)$ 
associated with the chosen torsionfree connection $\nabla$ on $T_\M$. 
Our construction goes as follows. 
The Lie algebra $\Gamma(T_\M)$ of smooth vector fields on $\M$ 
acts infinitesimally from the left on $\mathcal{D}(\M)$ by composition 
of differential operators. Identifying $\mathcal{D}(\M)$ with 
the symmetric algebra $\Gamma\big(S(T_\M)\big)$ via the 
formal exponential map $\pbw$ and transferring this infinitesimal action 
through $\pbw$, we obtain a flat connection $\cntn$ on the vector bundle $S(T_\M)$. 
We prove that the covariant differential of the flat connection induced by $\cntn$ on 
the dual bundle $\SM$ coincides with the coboundary operator $D$ constructed 
by iteration --- see Theorem~\ref{Bastille}.  
As a consequence, we prove an extension of the Emmrich--Weinstein theorem 
\cite{MR1327535} to $\ZZ$-graded manifolds: 
the augmentation map $\tau$ which identifies smooth functions on $\M$ 
with $\cntn$-flat sections of the bundle $\SM$ is a Taylor expansion 
twisted by the formal exponential map --- see Corollary~\ref{Bourse}.
When $\M$ is an ordinary smooth manifold, we recover the classical
Emmrich--Weinstein theorem \cite{MR1327535}.

In 2005, while globalizing Kontsevich's formality theorem
~\cite{MR2062626} from local charts to whole smooth manifolds,
Dolgushev~\cite{MR2102846} proved that the Fedosov flat connection $D$ on $\Sm$ 
and the augmentation map $\tau: C^\infty(M) \to \Omega^0(M,\hat{S}(T\dual_M))$, 
which both stem from a choice of torsionfree connection $\nabla$ on the tangent bundle 
$T_M$ (see Propositions~\ref{Concorde} and~\ref{Voltaire}), 
fit into an exact sequence  
\begin{equation}\label{Cadet} 
\begin{tikzcd}[column sep=scriptsize]
0 \arrow{r} & C^\infty(M) \arrow{r}{\tau} 
& \Omega^0(M,\hat{S}(T\dual_M)) \arrow{r}{\DD} 
& \Omega^1(M,\hat{S}(T\dual_M)) \arrow{r}{\DD} 
& \Omega^2(M,\hat{S}(T\dual_M)) \arrow{r}{\DD} 
& \cdots 
\end{tikzcd} \end{equation}
providing a resolution of the algebra $C^\infty(M)$ of smooth functions on $M$. 

An analogue of Dolgushev's result in the context of $\ZZ$-graded manifolds can be 
found in~\cite[Appendix]{MR2304327}, where it is used in connection with the 
quantization of coisotropic submanifolds. 
However, rather than resolve the entire algebra of functions on the graded 
manifold at hand, Cattaneo and Felder only resolve the functions on the support 
of this graded manifold as that is sufficient for their purpose.

In this paper, we give a simple proof of the exactness of the 
sequence~\eqref{Cadet} generalized to $\ZZ$-graded manifolds based on 
homological perturbation. 
We briefly recall the principle of homological perturbation in the Appendix. 
The resulting Dolgushev--Fedosov resolution of $C^\infty(\M)$ 
can be used to globalize Kontsevich's local formality theorem 
(see~\cite{MR2062626}) in the context of $\ZZ$-graded manifolds 
(see~\cite{MR2304327}). 

Finally, we note that since $\Omega^\bullet(\M,\SM)$ may be regarded 
as the algebra of functions on the graded manifold 
$T_\M[1]\oplus T_\M$, the (graded manifold version of) exact sequence 
\eqref{Cadet} means that the dg-manifold $\M$ 
with support $M$ and trivial homological vector field is weakly equivalent to the dg-manifold 
$T_\M[1]\oplus T_\M$ with support $M$ and the operator $D$ as homological vector field. 
Throughout this paper, the structure sheaf $\mathcal{O}_{T_\M}$ of the $\ZZ$-graded manifold $T_\M$ 
is understood to be a sheaf over the smooth manifold $M$. 
Likewise, the support of the graded manifold $T_\M[1]\oplus T_\M$ is 
the smooth manifold $M$.

\subsection*{Notations}

Some remarks concerning notations are necessary. 

By default, in this paper, ``graded'' means $\ZZ$-graded.

We use the symbol $\Bbbk$ to denote the field of either real or complex numbers. 

Given a module $M$ over a ring, 
the symbol $\hat{S}(M)$ denotes the 
$\mathfrak{m}$-adic completion of the symmetric algebra $S(M)$, where $\mathfrak{m}$ is the ideal of $S(M)$ 
generated by $M$. 

The \emph{Koszul sign} $\epsilon(\sigma;X_1,X_2,\cdots,X_n)$ 
of a permutation $\sigma$ of homogeneous elements 
$X_1,X_2,\dots,X_n$ of a graded vector space $V$ 
is determined by the equality
\[ X_{\sigma(1)} \odot X_{\sigma(2)} \odot \cdots \odot 
X_{\sigma(n)} = \epsilon(\sigma;X_1,X_2,\cdots,X_n)\ 
X_1 \odot X_2 \odot \cdots \odot 
X_n \] in the (graded) symmetric algebra $S(V)$. 

Let $\M$ be a finite-dimensional graded manifold,
let $(x_i)_{i\in\{1,\dots,n\}}$ be a set of local coordinates on $\M$ 
and let $(y_j)_{j\in\{1,\dots,n\}}$ be the induced local frame of $T_\M\dual$ 
regarded as fiberwise linear functions on $T_\M$.

We use the symbol $\NN$ to denote the set of positive integers and the symbol $\NO$ for the set of nonnegative integers. 
Given a multi-index $I=(i_1,i_2,\cdots,i_n)\in\NO^n$, 
we adopt the following multi-index notations:
\begin{align*}
I! &= i_1!i_2! \cdots i_n! & 
\truncation{\leqslant k}I &= (i_1,\cdots,i_k,0,\cdots,0) \\
\abs{I} &= i_1+i_2+\cdots+i_n & 
\truncation{< k}I &= (i_1,\cdots,i_{k-1},0,\cdots,0) \\ 
y^I &= (y_1)^{i_1}(y_2)^{i_2} \cdots (y_n)^{i_n} & 
\truncation{>k}I &= (0,\cdots,0,i_{k+1},\cdots,i_n)  \\ 
\end{align*}
\begin{gather*}
\px^I=\underset{i_1 \text{ factors}}{\underbrace{\partial_{x_1}\odot\cdots\odot\partial_{x_1}}}
\odot\underset{i_2 \text{ factors}}{\underbrace{\partial_{x_2}\odot\cdots\odot\partial_{x_2}}}
\odot\cdots\odot\underset{i_n \text{ factors}}{\underbrace{\partial_{x_n}\odot\cdots\odot\partial_{x_n}}}
\\ 
\underleftarrow{\px^I}=\underset{i_n \text{ factors}}{\underbrace{\partial_{x_n}\odot\cdots\odot\partial_{x_n}}}
\odot\underset{i_{n-1} \text{ factors}}{\underbrace{\partial_{x_{n-1}}\odot\cdots\odot\partial_{x_{n-1}}}}
\odot\cdots\odot\underset{i_1 \text{ factors}}{\underbrace{\partial_{x_1}\odot\cdots\odot\partial_{x_1}}}
\end{gather*} 
We use the symbol $e_k$ to denote the multi-index all of whose 
components are equal to $0$ except for the $k$-th which is equal to $1$.  
Thus $\partial_x^{e_k}=\partial_{x_k}$.

The de~Rham exterior differential $d$ is an operator of degree $+1$ while the interior product $i_X$ with a homogeneous 
vector field $X$ of degree $\degree{X}$ is an operator of degree $-1-\degree{X}$. 
The element 
\[ dx_{i_1}\wedge\cdots\wedge dx_{i_p}\otimes y^J\tfrac{\partial}{\partial y_q} \] 
of $\Omega^p\big(\M,S^{\degree{J}}(T\dual_{\M})\otimes T_{\M}\big)$ 
is of degree 
\[ \sum_{k=1}^p(1+\degree{x_{i_k}})+\sum_{k=1}^n J_k\degree{y_k}-\degree{y_q} ,\]
where $\degree{x_{k}}$ (resp.\ $\degree{y_{q}}$) denotes the degree 
of the coordinate function $x_{k}$ (resp.\ $y_{q}$). 

\subsection*{Acknowledgments} 
We would like to express our gratitude to Ping Xu for enlightening discussions. 

\section{Connections on graded manifolds}

A $\ZZ$-graded manifold $\M$ consists of a smooth manifold $M$ (called the support of the graded manifold) and a sheaf of $\ZZ$-graded, graded-commutative algebras $\mathcal{O}_{\M}$ over $M$ such that every point of $M$ admits an open neighborhood $U$ for which $\mathcal{O}_{\M}(U)$ is isomorphic to 
$C^\infty(U)\otimes\hat{S}(V\dual)$, where $V$ is a fixed $\ZZ$-graded vector space and $\hat{S}(V\dual)$ denotes the formal power series on $V$. We say that the graded manifold $\M$ is finite-dimensional if $\dim M$ and $\dim V$ are finite. We use the notation $C^\infty(\M)$ to denote the algebra $\mathcal{O}_{\M}(M)$. We refer the reader to~\cite[Chapter 2]{math/0605356} for a short introduction to $\ZZ$-graded manifolds. 

\begin{definition}\label{Philadelphia}
Let $\EE \to \M$ be a vector bundle in the category of graded manifolds. 
A connection on $\EE \to \M$ is a $\Bbbk$-linear map 
\[ \nabla: \Gamma(T_\M) \otimes \Gamma(\EE) \to \Gamma(\EE) \] of degree $0$ such that 
\begin{gather*}
\nabla_{fX}S = f\nabla_X S ,\\ 
\nabla_X (fS) = X(f)S+(-1)^{|X||f|} f\nabla_X S ,
\end{gather*}
for all $f \in C^\infty(\M)$, $X \in \Gamma(T_\M)$ and $S \in \Gamma(\EE)$. 

The covariant differential associated to a connection $\nabla$ is the map 
\[ \coder : \Omega^\bullet(\M,\EE) \to \Omega^{\bullet+1}(\M,\EE) \] of degree $+1$ satisfying 
\begin{gather*}
\nabla_X S = \iota_X (\coder S) ,\\ 
\coder(\alpha \wedge \beta ) = d \alpha \wedge \beta + (-1)^{|\alpha|} \alpha \wedge \coder \beta ,
\end{gather*}
for all $X\in\Gamma(T_{\M})$, $S\in\Gamma(\EE)$, $\alpha\in\Omega(\M)$ 
and $\beta \in \Omega(\M,\EE)$.

The curvature of a connection $\nabla$ is the $2$-form 
$R^\nabla \in \Omega^2(\M,\End(\EE))$ 
defined by 
\[ R^\nabla(X,Y)=(-1)^{\abs{Y}-1}
\Big\{\nabla_X\nabla_Y- (-1)^{|X||Y|} \nabla_Y\nabla_X -\nabla_{\lie{X}{Y}}\Big\} ,\] 
for all homogeneous $X,Y\in\Gamma(T_\M)$ 
so that, for all $\omega\in\Omega(\M,\EE)$, 
we have $(\coder)^2 \omega= R^\nabla \wedge \omega$.

If $\EE = T_\M$, the torsion of $\nabla$ is the (1,2)-tensor 
$T^\nabla: \sections{T_\M}\times \sections{T_\M} \to \sections{T_\M}$ defined by 
\[ T^\nabla(X,Y)=\nabla_X Y - (-1)^{|X||Y|} \nabla_Y X - [X,Y] ,\] 
for all homogeneous $X,Y\in\Gamma(T_\M)$.
\end{definition}

\section{Formal exponential map} 

The exponential maps defined in terms of geodesics of a connection for ordinary smooth manifolds does not generalize straightforwardly to graded manifolds as the latter only exist 
through their algebras of functions. 
However, it turns out that the the fiberwise 
$\infty$-order jet of the geodesic exponential map 
admits a purely algebraic description, 
which extends readily to the context of graded manifolds~\cite{1408.2903}. 

\begin{definition}\label{Tuileries}
Let $\M$ be a graded manifold and 
let $\mathcal{D}(\M)$ denote its algebra of differential operators. 
The formal exponential map associated to a connection $\nabla$ on $T_\M$
is the morphism of left 
$C^\infty(\M)$-modules 
\[ \pbw:\Gamma(S T_\M) \to \mathcal{D}(\M) ,\] 
inductively defined by the relations 
\begin{gather*}
\pbw(f)=f, \quad \forall f\in C^\infty(\M) \\
\pbw(X)=X,\quad \forall X\in \Gamma(T_\M),
\end{gather*}
and, for all $n\in\NN$ and any homogeneous elements $X_0,\dots,X_n$ of $\sections{T_\M}$, 
\begin{equation}\label{Europe} 
\pbw(X_0 \odot \cdots \odot X_n)= 
\tfrac{1}{n+1}\sum_{k=0}^{n} \epsilon_k\  
\Big\{ X_k \cdot\pbw(X^{\{k\}}) 
-\pbw\big(\nabla_{X_k}(X^{\{k\}})\big) \Big\}
,\end{equation}
where $\epsilon_k=(-1)^{|X_k|(|X_0|+\cdots+|X_{k-1}|)}$ 
and $X^{\{k\}}=X_0\odot\cdots\odot X_{k-1}\odot X_{k+1}\odot\cdots\odot X_n$. 
\end{definition}

\begin{lemma}
The formal exponential map is well-defined.
\end{lemma}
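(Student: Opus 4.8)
The plan is to argue by induction on the symmetric degree that \eqref{Europe} descends to a well-defined morphism of left $C^\infty(\M)$-modules on each $\Gamma(S^{n+1}T_\M)$. In degrees $0$ and $1$ the map is the identity, which is manifestly well defined. For the inductive step, I would first observe that the recursion is well founded: every term on the right-hand side of \eqref{Europe} involves $\pbw$ evaluated only on elements of $\Gamma(S^n T_\M)$, namely the degree-$n$ products $X^{\{k\}}$ and the elements $\nabla_{X_k}(X^{\{k\}})$, where $\nabla_{X_k}$ denotes the degree-preserving derivation of $\Gamma(S T_\M)$ extending the connection. By the induction hypothesis $\pbw$ is already a well-defined $C^\infty(\M)$-linear map in degree $n$, so the right-hand side is a genuine element of $\mathcal{D}(\M)$ once an ordered representative $(X_0,\dots,X_n)$ is chosen. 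It then remains to show that this element is independent of the representative, i.e.\ that the right-hand side of \eqref{Europe}, viewed as a function of the tuple $(X_0,\dots,X_n)$, is both $C^\infty(\M)$-multilinear and graded symmetric; these two properties are exactly what is needed for it to factor through $\Gamma(S^{n+1}T_\M)$.

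For $C^\infty(\M)$-multilinearity I would replace one factor $X_j$ by $fX_j$ and compare with the expected scalar multiple. The key mechanism is a cancellation: in those terms where the rescaled factor enters through $X^{\{k\}}$, the product rule for the composition $X_k\cdot\big(f\,\pbw(X^{\{k\}})\big)$ in $\mathcal{D}(\M)$ and the graded Leibniz rule for $\nabla_{X_k}$ each produce a spurious term $X_k(f)\,\pbw(X^{\{k\}})$, and these two cancel inside the bracket $\{\,\cdot\,\}$. This is precisely the role played by the correction term $-\pbw\big(\nabla_{X_k}(X^{\{k\}})\big)$. The residual sign $(-1)^{|X_k||f|}$ generated by the composition is then exactly compensated by the change of the Koszul sign $\epsilon_k$ caused by the shift $|X_j|\mapsto |f|+|X_j|$, so that the sum reproduces the correct module-structure scalar, as a short direct computation confirms.

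For graded symmetry it suffices, since transpositions of adjacent entries generate the symmetric group, to check that exchanging $X_i$ and $X_{i+1}$ multiplies the right-hand side by $(-1)^{|X_i||X_{i+1}|}$. I would split $\sum_{k=0}^n$ into the indices $k\notin\{i,i+1\}$ and the two indices $k\in\{i,i+1\}$. For $k\notin\{i,i+1\}$ the factor $X_k$ is untouched, $\epsilon_k$ is unchanged (the total degree to the left of $X_k$ is unchanged), and $X^{\{k\}}$ merely has $X_i,X_{i+1}$ transposed, so the degree-$n$ induction hypothesis supplies the sign $(-1)^{|X_i||X_{i+1}|}$. For the two remaining indices, the $k=i$ term after the swap coincides with the original $k=i+1$ term and vice versa, and a direct comparison of the Koszul signs $\epsilon_i,\epsilon_{i+1}$ with their swapped counterparts shows that both match after extracting the overall factor $(-1)^{|X_i||X_{i+1}|}$. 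Combining the two contributions yields graded symmetry and completes the inductive step.

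The main obstacle is the Koszul sign bookkeeping in these two compatibility checks: one must verify that in the multilinearity argument the variation of $\epsilon_k$ exactly absorbs the sign produced by moving $f$ across $X_k$, and that in the symmetry argument the signs $\epsilon_i,\epsilon_{i+1}$ reorganize correctly under the transposition. Everything else follows routinely from the connection axioms and the induction hypothesis; in particular, the presence of the correction term in \eqref{Europe} is what guarantees the crucial cancellation of the inhomogeneous terms $X_k(f)\,\pbw(X^{\{k\}})$, and hence the $C^\infty(\M)$-linearity of $\pbw$.
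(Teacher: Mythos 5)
Your proposal is correct and follows essentially the same route as the paper: the paper likewise defines the map recursively on ordered tuples of vector fields (via maps $E_n$ on $\Gamma(T_\M)^{\times n}$), proves $C^\infty(\M)$-multilinearity by induction, and then checks that the resulting map on the tensor algebra kills the graded-symmetry relations so that it descends to $\Gamma(S(T_\M))$. The cancellation you identify --- the $X_k(f)\,\pbw(X^{\{k\}})$ term from composing with $f$ in $\mathcal{D}(\M)$ against the one from the Leibniz rule for $\nabla_{X_k}$ --- is exactly the mechanism underlying the paper's multilinearity step.
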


\begin{proof}[Sketch of proof]
Let $\nabla$ be a connection on the tangent bundle $T_\M$ 
to a graded manifold $\M$. 

Consider the sequence $(E_n)_{n\in\NO}$ 
of maps \[ E_n:\underset{n\text{ factors}}{\underbrace{\Gamma(T_\M)\times\Gamma(T_\M)\times\cdots
\times\Gamma(T_\M)}}\to\mathcal{D}(\M) \] 
starting with the natural inclusions 
$E_0:C^\infty(\M)\to\mathcal{D}(\M)$ 
and $E_1:\Gamma(T_\M)\to\mathcal{D}(\M)$ 
and defined recursively by the relation 
\[ \begin{split} 
E_n(X_1,\dots,X_n)=\tfrac{1}{n} \bigg\{ & 
\sum_{k=1}^n \epsilon\ 
X_k\cdot E_{n-1}(X_1,\cdots,\widehat{X_k},\cdots,X_n) \\  & -\sum_{l<k}\epsilon\ E_{n-1}(X_1,\cdots,\nabla_{X_k}X_l, 
\cdots,\widehat{X_k},\cdots,X_n) \\ 
& -\sum_{k<l}\epsilon\ E_{n-1}(X_1,\cdots,\widehat{X_k}, 
\cdots,\nabla_{X_k}X_l,\cdots,X_n) \bigg\} 
,\end{split} \] 
for $n\geqslant 2$.
The symbol $\epsilon$ appearing in each term of the sum 
above denotes the Koszul sign $\epsilon(\sigma;X_1,X_2,\cdots,X_n)$ of the permutation $\sigma$ 
of the order in which the homogeneous elements $X_1,X_2, \dots, X_n$ of $\sections{T_\M}$ appear in that term.

By induction on $n$, show that each $E_n$ is multilinear over $C^\infty(\M)$. 
The morphism of left $C^\infty(\M)$-modules 
\[ E:\Gamma\bigg(\bigoplus_{n\in\NO} (T_\M)^{\otimes n}\bigg)\to\mathcal{D}(\M) \] 
determined by the sequence of maps $(E_n)_{n\in\NO}$ vanishes on the ideal generated by all 
elements of the form 
\[ X_1\otimes\cdots\otimes X_k\otimes X_{k+1}
\otimes\cdots\otimes X_n 
-(-1)^{\abs{X_k}{\abs{X_{k+1}}}} 
X_1\otimes\cdots\otimes X_{k+1}\otimes X_k\otimes\cdots 
\otimes X_n \]
for all homogeneous elements $X_1,\dots,X_n$ of $\Gamma(T_\M)$.
The induced morphism of $C^\infty(\M)$-modules from 
the symmetric algebra $\Gamma\big(S(T_\M)\big)$ to the algebra $\mathcal{D}(\M)$ of differential operators is the formal exponential map: 
\[ \begin{tikzcd}[row sep=tiny]
\Gamma\big(\bigoplus_{n\in\NO} (T_\M)^{\otimes n}\big) 
\arrow{dr}{E} \arrow[two heads]{dd} & \\
& \mathcal{D}(\M) \\
\Gamma\big(S(T_\M)\big) \arrow{ur}[swap]{\pbw} &  
\end{tikzcd} \qedhere \]
\end{proof}

For a classical (i.e.\ nongraded) manifold $M$, this map 
$\pbw$ is the fiberwise $\infty$-order jet of the exponential map 
$\exp:T_M\to M\times M$ associated to the connection $\nabla$ 
--- whence the terminology `formal exponential map.'

\begin{proposition}[\cite{1408.2903}]
If $\M$ is a classical smooth manifold $M$ (i.e.\ $V=0$), then 
\begin{equation*} 
\pbw(X_0 \odot \cdots \odot X_k)(f) \\ 
= \left.\frac{d}{dt_0}\right|_0 \left.\frac{d}{dt_1}\right|_0 \cdots 
\left.\frac{d}{dt_k}\right|_0 
f\big(\exp(t_0 X_0 + t_1 X_1 +\cdots + t_k X_k)\big)
\end{equation*}
for all $X_0, X_1,\cdots, X_k \in\Gamma(T_{M})$ and $f\in C^\infty(M)$. 
\end{proposition}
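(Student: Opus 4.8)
The plan is to show that the right-hand side, viewed as a map
\[ \Phi:\Gamma(S T_M)\to\mathcal{D}(M),\qquad
\Phi(X_0\odot\cdots\odot X_k)(f)(p)=\partial_{t_0}\cdots\partial_{t_k}\big|_0\, f\big(\exp_p(\textstyle\sum_i t_i X_i|_p)\big), \]
is well defined and obeys exactly the base cases and the recursion~\eqref{Europe} that characterize $\pbw$; since those relations determine $\pbw$ uniquely (by induction on the symmetric degree, as follows from the well-definedness lemma), this forces $\Phi=\pbw$. First I would check that $\Phi$ descends to the symmetric algebra and lands in $\mathcal{D}(M)$: the operators $\partial_{t_i}|_0$ commute, so $\Phi$ is symmetric in the $X_i$, and since $(fX_0)|_p=f(p)\,X_0|_p$ the chain rule extracts a factor $f(p)$ from $\partial_{t_0}|_0$, so each slot is $C^\infty(M)$-linear with the scalar factoring out on the left; hence $\Phi$ is a morphism of left $C^\infty(M)$-modules. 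The base cases are immediate: $\Phi(f)=f$, and $\Phi(X)(f)(p)=(df)_p(\tfrac{d}{dt}|_0\exp_p(tX_p))=(df)_p(X_p)=(Xf)(p)$, so $\Phi(X)=X$.

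The geometric content is encoded in a closed form obtained by passing to normal coordinates. Fixing $p$ and choosing $\nabla$-normal coordinates $(u^j)$ centred at $p$ (with $m=\dim M$), one has $\exp_p(\sum_i t_i X_i|_p)=(\sum_i t_i X_i^1(p),\dots,\sum_i t_i X_i^m(p))$, so that
\[ \Phi(X_0\odot\cdots\odot X_n)(f)(p)
=\sum_{j_0,\dots,j_n} X_0^{j_0}(p)\cdots X_n^{j_n}(p)\,
\big(\partial_{u^{j_0}}\cdots\partial_{u^{j_n}}(f\circ\exp_p)\big)(0); \]
thus $\Phi(X_0\odot\cdots\odot X_n)$ is the totally symmetric derivative of $f$ read in normal coordinates at the basepoint. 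This exhibits $\Phi(\cdots)$ as a genuine differential operator with coefficients depending smoothly on $p$ and makes the whole map transparent.

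The heart of the argument is the verification of the recursion~\eqref{Europe} for $\Phi$ (all Koszul signs equal $+1$ in the classical case). I would compute $\big(X_k\cdot\Phi(X^{\{k\}})\big)(f)(p)$ by applying the first-order operator $X_k|_p$ to the function $q\mapsto\Phi(X^{\{k\}})(f)(q)$, using the closed form with normal coordinates centred at the moving basepoint $q$. The Leibniz rule splits this into two contributions: one in which $X_k$ differentiates the coefficients $X_i^{j_i}(q)$, and one in which it differentiates the basepoint-dependent normal-coordinate symmetric derivative of $f$. Using that the second-order Taylor expansion of $q\mapsto\exp_q$ is governed by the Christoffel symbols of $\nabla$ (equivalently, the geodesic equation $\nabla_{\dot\gamma}\dot\gamma=0$), the second contribution produces precisely the Christoffel terms, and together with the non-symmetric part of the first contribution these are exactly cancelled by the subtraction of $\Phi(\nabla_{X_k}X^{\{k\}})$; after averaging over $k$ with the factor $\tfrac1{n+1}$ one is left with the fully symmetric normal-coordinate derivative, that is, $\Phi(X_0\odot\cdots\odot X_n)(f)(p)$.

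The step I expect to be the main obstacle is this last matching: differentiating the normal coordinates attached to the varying basepoint $q$ along $X_k$ and bookkeeping the resulting Christoffel contributions so that they line up term by term with $\nabla_{X_k}X^{\{k\}}=\sum_{l\neq k}X_0\odot\cdots\odot(\nabla_{X_k}X_l)\odot\cdots$. This is a finite but somewhat delicate computation, and isolating it as a lemma on the basepoint-derivative of the exponential map to second order keeps the induction clean. Granting it, the uniqueness of the solution to~\eqref{Europe} yields $\Phi=\pbw$, which is the claim.
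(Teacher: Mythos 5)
The paper does not actually prove this proposition --- it imports it from \cite{1408.2903} --- so there is no internal argument to measure yours against; I can only assess the proposal on its merits. Your architecture is the natural one: the base cases together with recursion~\eqref{Europe} determine $\pbw$ uniquely by induction on symmetric degree, so it suffices to show that the fiberwise jet $\Phi$ of the geodesic exponential map is well defined on $\Gamma(S T_M)$ and satisfies the same relations. Your treatment of well-definedness (symmetry from commuting $\partial_{t_i}$, $C^\infty(M)$-linearity from rescaling the $t_i$), of the base cases, and the normal-coordinate closed form for $\Phi(X_0\odot\cdots\odot X_n)(f)(p)$ are all correct.

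The gap is that the induction step --- verifying \eqref{Europe} for $\Phi$ --- is where the entire mathematical content lives, and you leave it as an acknowledged ``delicate computation.'' Worse, the route you sketch for it is the hard one: differentiating the normal-coordinate symmetric derivative of $f$ in its \emph{basepoint} $q$ along $X_k$ requires controlling the footpoint-derivative of $\exp$, which is Jacobi-field data rather than bare Christoffel symbols, and the claimed term-by-term matching with $\Phi(\nabla_{X_k}X^{\{k\}})$ is not at all automatic. A cleaner way to close the argument is to first upgrade your closed form to the coordinate-free identity $\Phi(X_0\odot\cdots\odot X_n)(f)=\bigl(\operatorname{Sym}\nabla^{n+1}f\bigr)(X_0,\dots,X_n)$, i.e.\ the classical fact that the Taylor coefficients of $f\circ\exp_q$ at the origin are the symmetrized covariant derivatives of $f$ at $q$ (proved in normal coordinates from the vanishing, at the center, of the symmetrized Christoffel symbols and of their symmetrized derivatives). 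Once $\Phi(X^{\{k\}})(f)$ is recognized as the contraction of $X^{\{k\}}$ with the globally defined tensor $\operatorname{Sym}\nabla^{n}f$, the Leibniz rule for $\nabla$ gives $X_k\cdot\Phi(X^{\{k\}})(f)-\Phi\bigl(\nabla_{X_k}X^{\{k\}}\bigr)(f)=\bigl(\nabla_{X_k}\operatorname{Sym}\nabla^{n}f\bigr)(X^{\{k\}})$, and averaging over $k$ with the factor $\tfrac{1}{n+1}$ yields $\bigl(\operatorname{Sym}\nabla^{n+1}f\bigr)(X_0,\dots,X_n)$, since symmetrizing a tensor already symmetric in its last $n$ arguments amounts to averaging over the choice of first slot. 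Finally, note that the statement allows connections with torsion; you should either observe that both sides of the claimed identity depend only on the torsion-free part of $\nabla$ (for the recursion, this follows by pairing the $(k,l)$ and $(l,k)$ terms in $\sum_k\nabla_{X_k}X^{\{k\}}$) and reduce to $T^\nabla=0$, or keep the $\tfrac{1}{n+1}\sum_k$ averaging explicit throughout, since only the symmetrized Christoffels vanish at the center of a normal chart.
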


\section{Properties of the formal exponential map}

The algebra $\mathcal{D}(\M)$ of differential operators 
on $\M$ can be thought of as the universal enveloping algebra $\enveloping{T_\M}$ 
of $T_\M$ (regarded as a Lie algebroid) \cite{MR0154906,MR2653938,MR1687747} 
and admits a natural filtration by the order of the differential operators.

Using \eqref{Europe}, it is straightforward to prove by induction on $n$ that, 
\[ \pbw(X_1\odot\cdots\odot X_n)\in\mathcal{U}^{\leqslant n}(T_\M) ,\] 
for all $n\in\NN$ and $X_1,\dots,X_n\in\sections{T_\M}$. 
In other words, the map $\pbw$ respects the filtrations 
on $\sections{S T_\M}$ and $\enveloping{T_\M}$.

We introduce the functor $\Gr$ which takes a filtered vector space
\[ \cdots\subset\mathscr{A}^{\leqslant k-1}\subset\mathscr{A}^{\leqslant k}\subset\mathscr{A}^{\leqslant k+1}\subset\cdots \] to 
the associated graded vector space 
\[ \Gr\big(\mathscr{A}\big)=\bigoplus_{k} \frac{\mathscr{A}^{\leqslant k}}{\mathscr{A}^{\leqslant k-1}} .\]  

Rinehart proved that, for every Lie algebroid $L$, 
the symmetrization map \[ \sym:\sections{S^\bullet(L)}\to\Gr^\bullet\big(\enveloping{L}\big) ,\] defined by 
\[ X_1\odot \cdots\odot X_n \mapsto \frac{1}{n!}\sum_{\sigma\in S_n} \epsilon\ X_{\sigma(1)}\cdots X_{\sigma(n)} 
,\quad\forall X_1,\dots,X_n\in\sections{L} ,\]
is an isomorphism of graded vector spaces~\cite{MR0154906}. 

The next lemma asserts that \[ \Gr(\pbw)=\sym .\] 

\begin{lemma}\label{Gambetta}
For all $n\in\NN$ and $X_1,\dots,X_n\in\sections{T_\M}$, 
\[ \pbw(X_1\odot\cdots\odot X_n)-\frac{1}{n!}\sum_{\sigma\in S_n} \epsilon\ 
X_{\sigma(1)}\cdot X_{\sigma(2)}\cdot \cdots \cdot X_{\sigma(n)} \]
is an element of $\enveloping{T_\M}^{\leqslant n-1}$.
\end{lemma}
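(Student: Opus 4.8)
The plan is to reformulate the statement in terms of principal symbols and then induct on $n$ using the recursion \eqref{Europe}. Write $\sym(X_1\odot\cdots\odot X_n)=\tfrac{1}{n!}\sum_{\sigma\in S_n}\epsilon\ X_{\sigma(1)}\cdots X_{\sigma(n)}\in\enveloping{T_\M}^{\leqslant n}$ for the element subtracted in the lemma. Since the composite $\sections{S^n(T_\M)}\xto{\sym}\enveloping{T_\M}^{\leqslant n}\to\Gr^n(\enveloping{T_\M})$ is Rinehart's isomorphism, and passage to the associated graded turns the associative product of $\enveloping{T_\M}$ into the graded-commutative symmetric product, the lemma is equivalent to the assertion that the class of $\pbw(X_1\odot\cdots\odot X_n)$ in $\Gr^n(\enveloping{T_\M})$ equals $X_1\odot\cdots\odot X_n$. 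I would prove this last statement by induction on $n$, the case $n=1$ being immediate from $\pbw(X)=X$.

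For the inductive step, suppose the claim holds for all products of $m$ homogeneous sections and apply \eqref{Europe} to a product $X_0\odot\cdots\odot X_m$ of $m+1$ factors, writing $X^{\{k\}}=X_0\odot\cdots\odot X_{k-1}\odot X_{k+1}\odot\cdots\odot X_m$ as before. In the $k$-th summand, the connection term $\pbw\big(\nabla_{X_k}(X^{\{k\}})\big)$ may be discarded: since $\nabla_{X_k}$ is a degree-preserving derivation of $\sections{S(T_\M)}$, its value $\nabla_{X_k}(X^{\{k\}})$ still lies in $\sections{S^m(T_\M)}$, so by the filtration property recalled just before Lemma~\ref{Gambetta} its image under $\pbw$ sits in $\enveloping{T_\M}^{\leqslant m}$ and contributes nothing to $\Gr^{m+1}$. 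For the remaining term, the induction hypothesis gives $\pbw(X^{\{k\}})\equiv\sym(X^{\{k\}})\pmod{\enveloping{T_\M}^{\leqslant m-1}}$, whence $X_k\cdot\pbw(X^{\{k\}})\equiv X_k\cdot\sym(X^{\{k\}})\pmod{\enveloping{T_\M}^{\leqslant m}}$. Therefore, modulo $\enveloping{T_\M}^{\leqslant m}$,
\[ \pbw(X_0\odot\cdots\odot X_m)\equiv\tfrac{1}{m+1}\sum_{k=0}^m\epsilon_k\ X_k\cdot\sym(X^{\{k\}}) .\]

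It remains to identify the right-hand side in $\Gr^{m+1}(\enveloping{T_\M})$, which is the step I expect to demand the most care. Using that the symbol map is multiplicative and fixes each section $X\in\sections{T_\M}$, the class of $X_k\cdot\sym(X^{\{k\}})$ in $\Gr^{m+1}$ is the symmetric product $X_k\odot X^{\{k\}}$. Commuting $X_k$ into its natural slot in the graded-symmetric algebra produces precisely the Koszul sign $\epsilon_k=(-1)^{|X_k|(|X_0|+\cdots+|X_{k-1}|)}$, so $\epsilon_k\ X_k\odot X^{\{k\}}=X_0\odot\cdots\odot X_m$ independently of $k$; summing the $m+1$ identical contributions and dividing by $m+1$ yields $X_0\odot\cdots\odot X_m$. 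This shows that the symbol of $\pbw(X_0\odot\cdots\odot X_m)$ is $X_0\odot\cdots\odot X_m$, completing the induction. The main obstacle is thus not the recursion itself but the bookkeeping on the associated graded: one must verify that $\sym$ intertwines the symmetric product with the product induced on $\Gr(\enveloping{T_\M})$ --- a consequence of Rinehart's theorem together with the commutativity of the associated graded of the enveloping algebra --- and that the Koszul signs generated by the recursion match exactly those produced by reordering in the graded-symmetric algebra.
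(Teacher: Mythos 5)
Your argument is correct and follows essentially the same route as the paper's: induct on $n$ via the recursion \eqref{Europe}, discard the $\nabla$-terms because $\nabla_{X_k}(X^{\{k\}})\in\sections{S^{n-1}T_\M}$ and $\pbw$ respects the filtration, and substitute the inductive hypothesis into the surviving sum $\tfrac{1}{n}\sum_k\epsilon_k\,X_k\cdot\pbw(X^{\{k\}})$. The only difference is cosmetic: you finish by passing to the graded-commutative algebra $\Gr\big(\enveloping{T_\M}\big)$ and invoking multiplicativity of the symbol map, whereas the paper simply recombines $\tfrac{1}{n}\sum_{k}$ with the inner symmetrizations $\tfrac{1}{(n-1)!}\sum_{S_{n-1}}$ into $\tfrac{1}{n!}\sum_{S_n}$ by sorting permutations according to their first value --- both amount to the same bookkeeping.
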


\begin{proof}[Sketch of proof]
It follows from~\eqref{Europe} that 
\begin{multline*} \pbw(X_1\odot\cdots\odot X_n)-\frac{1}{n}\sum_{k=1}^n \epsilon\ X_k\cdot\pbw
(X_1\odot\cdots\odot\widehat{X_k}\odot\cdots\odot X_n) \\ 
=-\frac{1}{n}\sum_{k=1}^n \epsilon\ \pbw\big(\nabla_{X_k}(X_1\odot\cdots
\odot\widehat{X_k}\odot\cdots\odot X_n)\big) \end{multline*}
belongs to $\mathcal{U}^{\leqslant n-1}(T_\M)$ as 
\[ \nabla_{X_k}(X_1\odot\cdots\odot\widehat{X_k}\odot\cdots\odot X_n)
\in\sections{S^{n-1}(T_\M)} \] 
and $\pbw$ respects the filtrations. 
The result follows by induction on $n$. 
\end{proof}

The functor $\Gr$ has the following remarkable property: 
given a homomorphism $\phi:V\to W$ of filtered vector spaces, if the associated morphism of graded vector spaces $\Gr(\phi)$ is an isomorphism and the filtrations on $V$ and $W$ are both exhaustive and complete,\footnote{A filtration $\cdots\subset F^p(V)\subset 
F^{p+1}(V)\subset\cdots$ on a vector space $V$ is said to be exhaustive if 
$\bigcup_{p}F^p(V)=V$ and complete if 
the natural map $V\to\varprojlim_{p}\frac{V}{F^p(V)}$ is an isomorphism.} then $\phi$ itself is an isomorphism. 
Therefore, we have proved the following proposition: 

\begin{proposition}
The formal exponential map 
\[ \pbw:\sections{S T_\M}\to\enveloping{T_\M} \] 
is an isomorphism of filtered left $C^\infty(\M)$-modules. 
\end{proposition}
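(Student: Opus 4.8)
The plan is to apply the general criterion for $\Gr$ stated immediately before the proposition: a filtered morphism whose associated graded map is an isomorphism is itself an isomorphism, provided both filtrations are exhaustive and complete. Since Lemma~\ref{Gambetta} already identifies $\Gr(\pbw)$ with Rinehart's symmetrization map $\sym$, and $\sym$ is known to be an isomorphism of graded vector spaces, the entire argument reduces to checking the hypotheses of that criterion for the two filtrations at hand.

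First I would record that $\pbw$ is a morphism of filtered left $C^\infty(\M)$-modules: it respects the filtrations (this was established just above, using \eqref{Europe} and induction), and it is $C^\infty(\M)$-linear by construction in Definition~\ref{Tuileries}. Next I would verify that $\Gr(\pbw)=\sym$. By Lemma~\ref{Gambetta}, for each $n$ the difference between $\pbw(X_1\odot\cdots\odot X_n)$ and $\tfrac{1}{n!}\sum_{\sigma\in S_n}\epsilon\,X_{\sigma(1)}\cdots X_{\sigma(n)}$ lies in $\enveloping{T_\M}^{\leqslant n-1}$; passing to the quotient $\enveloping{T_\M}^{\leqslant n}/\enveloping{T_\M}^{\leqslant n-1}$, the two images coincide, which is exactly the statement $\Gr(\pbw)=\sym$ in symmetric degree $n$. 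Since Rinehart's theorem gives that $\sym$ is an isomorphism in every degree, $\Gr(\pbw)$ is an isomorphism of graded vector spaces.

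Then I would turn to the two conditions the criterion demands, exhaustiveness and completeness of both filtrations. Exhaustiveness is immediate: every element of $\sections{S T_\M}$ lies in some $\sections{S^{\leqslant n}T_\M}$ after completion bookkeeping, and every differential operator has finite order, so $\bigcup_n\enveloping{T_\M}^{\leqslant n}=\enveloping{T_\M}$. Completeness is where I expect the main subtlety, and it is the step I would treat most carefully: the symmetric algebra side involves the $\mathfrak{m}$-adic completion $\hat{S}$ built into the graded-manifold setup, so one must confirm that the filtration by symmetric degree is complete, i.e.\ that $\sections{S T_\M}$ is genuinely $\varprojlim_n \sections{S T_\M}/\sections{S^{\leqslant n}T_\M}$; on the differential-operator side one must likewise check that the order filtration is complete after the appropriate completion, which is natural because a graded manifold carries formal power series directions whose vector fields raise order indefinitely.

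The main obstacle is therefore not the algebraic core---that is handed to us by Lemma~\ref{Gambetta} together with Rinehart's theorem---but rather the verification that the completeness hypothesis genuinely holds in the graded setting, where the structure sheaf is already a completion along the formal directions. Once exhaustiveness and completeness are confirmed on both sides, the general property of $\Gr$ quoted in the excerpt applies verbatim and yields that $\pbw$ is an isomorphism of filtered left $C^\infty(\M)$-modules, completing the proof.
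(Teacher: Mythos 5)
Your proposal is correct and follows essentially the same route as the paper: the paper deduces the proposition directly from Lemma~\ref{Gambetta} (which gives $\Gr(\pbw)=\sym$), Rinehart's theorem that $\sym$ is an isomorphism, and the general criterion that a filtered morphism inducing an isomorphism on associated graded objects is itself an isomorphism when the filtrations are exhaustive and complete. Your extra care about exhaustiveness and completeness only spells out what the paper leaves implicit (and is in fact routine here, since both filtrations are increasing, bounded below, and exhaustive).
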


Note that both $\Gamma(S T_\M)$ and $\enveloping{T_\M}$ are coalgebras over $C^\infty(\M)$. 

The comultiplication 
\[ \Delta:\enveloping{T_\M}\to\enveloping{T_\M}\otimes_{C^\infty(\M)}\enveloping{T_\M} \] 
is characterized by the identities
\begin{gather}
\Delta(1)= 1\otimes 1; \notag \\
\Delta(X)= 1\otimes X + X\otimes 1, \quad\forall X\in\Gamma(T_\M); \notag \\
\Delta(U\cdot V) = \Delta(U)\cdot\Delta(V), \quad\forall U,V\in\enveloping{T_\M}, \label{Crimee}
\end{gather}
and is compatible with the natural filtration of $\enveloping{T_\M}$. 
Here the symbol $\otimes$ denotes the 
tensor product over $C^\infty(\M)$, the symbol 
$1$ denotes the constant function $1$, 
and the symbol $\cdot$ denotes the multiplication in $\enveloping{T_\M}$. 
We refer the reader to~\cite{MR1815717} for the precise 
meaning of Equation~\eqref{Crimee}. 

More explicitly, for all homogeneous elements $X_1,\cdots, X_k\in\Gamma(T_\M)$, 
we have
\begin{multline*}
\Delta(X_1 \cdots X_k)  
= 1 \otimes (X_1 \cdots X_k) + ( X_1 \cdots X_k) \otimes 1 \\
+ \sum_{\substack{p+q=k \\ p,q \in \NN}} \sum_{\sigma \in S_{p,q}} 
\epsilon(\sigma;X) (X_{\sigma(1)} \cdots X_{\sigma(p)}) 
\otimes (X_{\sigma(p+1)} \cdots X_{\sigma(k)}) 
,\end{multline*}
where $\epsilon(\sigma;X)$ denotes the Koszul sign of the permutation $\sigma$ of the homogeneous elements $X_1,\cdots, X_k\in\Gamma(T_\M)$ and $S_{p,q}$ 
denotes the space of $(p,q)$-shuffles.

Similarly, the comultiplication 
\[ \Delta: \Gamma(S T_\M) \to \Gamma(S T_\M) \otimes \Gamma(S T_\M) \]
is given by
\begin{multline*}
\Delta(X_1 \odot \cdots \odot X_k) 
= 1 \otimes (X_1 \odot \cdots \odot X_k) + ( X_1 \odot \cdots \odot X_k) \otimes 1 \\
+ \sum_{\substack{p+q=k \\ p,q \in \NN}} \sum_{\sigma \in S_{p,q}} \epsilon(\sigma;X) (X_{\sigma(1)} \odot \cdots \odot X_{\sigma(p)}) \otimes (X_{\sigma(p+1)} \odot \cdots \odot X_{\sigma(k)}) 
.\end{multline*}

The symbol $\odot$ denotes the symmetric product in $\Gamma(S T_\M)$.

\begin{theorem}\label{Vavin}
The formal exponential map $\pbw:\Gamma(S(T_\M))\to\enveloping{T_\M}$ 
is an isomorphism of filtered coalgebras over $C^\infty(\M)$.
\end{theorem}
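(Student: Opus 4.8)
The plan is to upgrade the previously established isomorphism of filtered $C^\infty(\M)$-modules to an isomorphism of \emph{coalgebras} by proving that $\pbw$ intertwines the two comultiplications, i.e.\ that
\[ \Delta\big(\pbw(S)\big) = (\pbw\otimes\pbw)\big(\Delta(S)\big) \]
for all $S\in\sections{S(T_\M)}$, where the tensor products are taken over $C^\infty(\M)$. Since the previous proposition already gives that $\pbw$ is a bijection respecting the filtrations, the only thing left to check is this compatibility with the coproducts; counitality is immediate because $\pbw$ sends the degree-$0$ part $C^\infty(\M)$ identically to the order-$0$ differential operators and the counit simply extracts this piece.

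First I would set up an induction on $n$, the symmetric degree of $S = X_1\odot\cdots\odot X_n$. The base cases $n=0$ and $n=1$ are precisely the defining relations $\pbw(f)=f$ and $\pbw(X)=X$ of Definition~\ref{Tuileries}, together with the stated normalizations $\Delta(1)=1\otimes 1$ and $\Delta(X)=1\otimes X + X\otimes 1$, so the identity holds there trivially. For the inductive step I would feed the defining recursion~\eqref{Europe} into the left-hand side $\Delta\big(\pbw(X_0\odot\cdots\odot X_n)\big)$. This expands $\pbw(X_0\odot\cdots\odot X_n)$ as an average over $k$ of two kinds of terms: $X_k\cdot\pbw(X^{\{k\}})$ and $-\pbw\big(\nabla_{X_k}(X^{\{k\}})\big)$. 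To compute $\Delta$ of the first kind I would invoke the multiplicativity~\eqref{Crimee} of the coproduct on $\enveloping{T_\M}$ together with $\Delta(X_k)=1\otimes X_k + X_k\otimes 1$, and then apply the induction hypothesis to $\pbw(X^{\{k\}})$, which has symmetric degree $n-1$; the second kind has degree $n-1$ as well (since $\nabla_{X_k}(X^{\{k\}})\in\sections{S^{n-1}(T_\M)}$), so the induction hypothesis applies directly.

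The main obstacle, and the heart of the argument, will be the bookkeeping that shows these expansions reassemble into exactly $(\pbw\otimes\pbw)\big(\Delta(X_0\odot\cdots\odot X_n)\big)$. The right-hand side involves the shuffle sum in the symmetric-coalgebra coproduct $\Delta(X_0\odot\cdots\odot X_n)$, and one must match the primitive-like terms $1\otimes(\cdots)$ and $(\cdots)\otimes 1$ together with the mixed $(p,q)$-shuffle terms against the output of the induction. The delicate point is that the connection term $-\pbw\big(\nabla_{X_k}(X^{\{k\}})\big)$ is \emph{not} by itself primitive-compatible; rather, after applying $\Delta$ and the induction hypothesis, the pieces of $\Delta(X_k)$ redistribute so that the $X_k\otimes 1$ and $1\otimes X_k$ contributions from the multiplicative terms combine with the covariant-derivative terms appearing on both tensor factors. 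I expect the key cancellation to be that, on each tensor factor, the combination $X_k\cdot\pbw(\cdots)-\pbw(\nabla_{X_k}(\cdots))$ locally reproduces the recursion~\eqref{Europe} for a shorter word, so that each summand of the shuffle sum is reconstituted with the correct Koszul sign. Tracking these Koszul signs through the reindexing — as factors of $X_k$ migrate past the $X_i$'s in $X^{\{k\}}$ and then get distributed across the two legs of the tensor product — is where all the care is required.

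A cleaner alternative I would keep in reserve is to avoid the direct term-chasing entirely and instead argue at the associated-graded level: by Lemma~\ref{Gambetta} we know $\Gr(\pbw)=\sym$, and Rinehart's symmetrization map is a well-known coalgebra isomorphism, so $\Gr(\pbw)$ is a morphism of graded coalgebras. Combined with the fact that both coproducts are filtration-compatible and that the filtrations are exhaustive and complete, one can then promote the statement ``$\Delta\circ\pbw - (\pbw\otimes\pbw)\circ\Delta$ vanishes on $\Gr$'' to ``it vanishes,'' in the same spirit as the passage from $\Gr(\phi)$ being an isomorphism to $\phi$ being an isomorphism used in the preceding proposition. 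Whichever route is taken, the conclusion is that $\pbw$ is an isomorphism of filtered coalgebras over $C^\infty(\M)$.
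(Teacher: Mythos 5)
Your primary route is exactly the paper's proof: induction on $n$, expansion of $\Delta\circ\pbw(X_0\odot\cdots\odot X_n)$ via the recursion~\eqref{Europe} and the multiplicativity~\eqref{Crimee}, followed by the observation that the combinations $X_k\cdot\pbw(\cdots)-\pbw(\nabla_{X_k}(\cdots))$ appearing in each tensor leg reconstitute~\eqref{Europe} for shorter words. The one substantive step you leave implicit is the multiplicity count that actually closes the induction: after regrouping, each $(p,n-p)$-shuffle term $\mathfrak{S}_p$ of $(\pbw\otimes\pbw)\circ\Delta$ is produced once for each choice of which factor plays the role of ``$X_k$'' in its own leg, namely $p+1$ times from the terms where $X_k$ lands in the left leg and $n-p$ times from those where it lands in the right leg, for a total coefficient $(p+1)+(n-p)=n+1$ cancelling the prefactor $\tfrac{1}{n+1}$. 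That count, together with the Koszul-sign bookkeeping you already flag, is the entire content of the inductive step, so your plan is sound but not yet a proof until it is carried out.

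The alternative you keep in reserve, however, does not work and you should discard it. From $\Gr(\pbw)=\sym$ being a coalgebra morphism you can only conclude that the defect $\psi:=\Delta\circ\pbw-(\pbw\otimes\pbw)\circ\Delta$ \emph{strictly lowers} the filtration, not that it vanishes: a filtration-preserving map with vanishing associated graded need not be zero (on $\Bbbk[x]$ filtered by degree, $d/dx$ induces the zero map on $\Gr$). The completeness-and-exhaustiveness principle invoked in the preceding proposition upgrades ``$\Gr(\phi)$ is an isomorphism'' to ``$\phi$ is an isomorphism''; it does not upgrade ``$\Gr(\psi)=0$'' to ``$\psi=0$,'' and indeed the whole point of Theorem~\ref{Vavin} is that the lower-order corrections built into $\pbw$ by the connection terms are precisely tuned so that $\psi$ vanishes on the nose, something no associated-graded argument can detect.
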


\begin{proof}
We need to prove that 
\[ \Delta\circ\pbw=(\pbw\otimes\pbw)\circ\Delta .\]
Given $n\in\NN$ and homogeneous elements $X_0, X_1, \dots, X_n$ of $\sections{T_\M}$, set 
\[ X^{\{k\}}=X_0\odot\cdots\odot X_{k-1}\odot X_{k+1}\odot\cdots\odot X_n \] 
and $\epsilon_k=(-1)^{\abs{X_k}(\abs{X_0}+\cdots+\abs{X_{k-1}})}$ for all $k\in\{0,\dots,n\}$. 

We have 
\begin{align*}
& \Delta\circ\pbw(X_0\odot\cdots\odot X_n) \\ 
=& \frac{1}{n+1} \sum_{k=0}^n \epsilon_k \
\Delta\Big(X_k\cdot\pbw(X^{\{k\}})-\pbw\big(\nabla_{X_k}(X^{\{k\}})\big)\Big) \\ 
=& \frac{1}{n+1} \sum_{k=0}^n \epsilon_k \Big\{
\Delta(X_k)\cdot\Delta\circ\pbw(X^{\{k\}})-\Delta\circ\pbw\big(\nabla_{X_k}(X^{\{k\}})\big) \Big\} \\ 
=& \frac{1}{n+1} \sum_{k=0}^n \epsilon_k \Big\{
(1\otimes X_k+X_k\otimes 1)\cdot(\pbw\otimes\pbw)\circ\Delta(X^{\{k\}}) \\ 
& -(\pbw\otimes\pbw)\circ\Delta\big(\nabla_{X_k}(X^{\{k\}})\big) \Big\} \\ 
=& \frac{1}{n+1} ( \un +\deux -\quatre -\trois ) 
\end{align*}
where 
\begin{align*}
\un &= \sum_{p=-1}^{n-1} \sum_{\substack{\sigma\in S_{n+1} \\ \sigma(0)<\cdots<\sigma(p) \\ 
\sigma(p+2)<\cdots<\sigma(n)}} \epsilon \ \pbw(X_{\sigma(0)}\odot\cdots\odot 
X_{\sigma(p)})\otimes X_{\sigma(p+1)}\cdot\pbw(X_{\sigma(p+2)}\odot\cdots\odot 
X_{\sigma(n)}) 
\\ 
\deux &= \sum_{p=0}^{n} \sum_{\substack{\sigma\in S_{n+1} \\ \sigma(1)<\cdots<\sigma(p) \\ 
\sigma(p+1)<\cdots<\sigma(n)}} \epsilon \ X_{\sigma(0)}\cdot
\pbw(X_{\sigma(1)}\odot\cdots\odot X_{\sigma(p)})
\otimes 
\pbw(X_{\sigma(p+1)}\odot\cdots\odot X_{\sigma(n)}) 
\\ 
\quatre &= \sum_{p=1}^{n} \sum_{\substack{\sigma\in S_{n+1} \\ \sigma(2)<\cdots<\sigma(p) \\ 
\sigma(p+1)<\cdots<\sigma(n)}} \epsilon \ 
\pbw\big( (\nabla_{X_{\sigma(0)}} X_{\sigma(1)})
\odot X_{\sigma(2)}\odot\cdots\odot X_{\sigma(p)}\big) 
\otimes \pbw(X_{\sigma(p+1)}\odot\cdots\odot X_{\sigma(n)}) 
\\ 
\trois &= \sum_{p=-1}^{n-2} \sum_{\substack{\sigma\in S_{n+1} \\ \sigma(0)<\cdots<\sigma(p) \\ 
\sigma(p+3)<\cdots<\sigma(n)}} \epsilon \ \pbw(X_{\sigma(0)}\odot\cdots\odot 
X_{\sigma(p)})\otimes \pbw\big( (\nabla_{X_{\sigma(p+1)}} X_{\sigma(p+2)})
\odot X_{\sigma(p+3)}\odot\cdots\odot X_{\sigma(n)}\big) 
.\end{align*}
Note that, in each term, the factor $\epsilon$ denotes the Koszul sign 
of the permutation $\sigma$ applied to the homogeneous elements 
$X_0, X_1, \dots, X_n$ of $\sections{T_\M}$ in that term.

Setting 
\[ \mathfrak{S}_p = 
\sum_{\substack{\sigma\in S_{n+1} \\ 
\sigma(0)<\cdots<\sigma(p) \\ 
\sigma(p+1)<\cdots<\sigma(n)}} \epsilon \
\pbw(X_{\sigma(0)}\odot\cdots\odot X_{\sigma(p)}) 
\otimes 
\pbw(X_{\sigma(p+1)}\odot\cdots\odot X_{\sigma(n)}) 
,\] 
it follows from \eqref{Europe} that  
\[ \deux-\quatre = \sum_{p=0}^{n} (p+1) \mathfrak{S}_p 
\qquad \text{and} \qquad 
\un-\trois = \sum_{p=-1}^{n-1} (n-p) \mathfrak{S}_p .\] 

Therefore, we obtain 
\begin{multline*} \deux-\quatre +\un-\trois = 
\sum_{p=0}^{n} (p+1) \mathfrak{S}_p
+ \sum_{p=-1}^{n-1} (n-p) \mathfrak{S}_p \\
=\sum_{p=-1}^{n} (p+1) \mathfrak{S}_p
+ \sum_{p=-1}^{n} (n-p) \mathfrak{S}_p 
= (n+1) \sum_{p=-1}^{n} \mathfrak{S}_p 
\end{multline*}
and 
\begin{multline*}
\Delta\circ\pbw(X_0\odot\cdots\odot X_n) 
= \frac{1}{n+1} (\deux-\quatre +\un-\trois) \\
= \sum_{p=-1}^{n} \mathfrak{S}_p 
= (\pbw\otimes\pbw)\circ\Delta
\big(X_0\odot\cdots\odot X_n\big) 
\end{multline*}
since 
\[ \Delta\big(X_0\odot\cdots\odot X_n\big)
= \sum_{p=-1}^{n} 
\sum_{\substack{\sigma\in S_{n+1} \\ 
\sigma(0)<\cdots<\sigma(p) \\ 
\sigma(p+1)<\cdots<\sigma(n)}} \epsilon \
(X_{\sigma(0)}\odot\cdots\odot X_{\sigma(p)}) 
\otimes (X_{\sigma(p+1)}\odot\cdots\odot X_{\sigma(n)}) 
.\qedhere\] 
\end{proof}

\section{Highest order terms of the formal exponential map}

The purpose of this section is to establish the following technical result. 

\begin{proposition}\label{Jasmin}
Let $\M$ be a graded manifold. 
The formal exponential map associated to a \emph{torsion-free} connection $\nabla$ on $T_\M$
satisfies 
\begin{equation}\label{Solferino}
\pbw(X_0\odot\cdots\odot X_n) 
\equiv X_0\cdots X_n - \sum_{j<k} \epsilon\ X_0\cdots\widehat{X_j}\cdots\widehat{X_k}\cdots X_n \cdot 
(\nabla_{X_j}X_k) \mod \mathcal{U}^{\leqslant n-1}(T_\M) 
\end{equation}
and \begin{multline}\label{Stalingrad}
\pbw\inv(X_0\cdots X_n) \equiv X_0\odot\cdots\odot X_n 
\\ \shoveright{+\sum_{j<k} \epsilon\ X_0\odot\cdots\odot\widehat{X_j}\odot\cdots\odot
\widehat{X_k}\odot\cdots\odot X_n\odot 
(\nabla_{X_j}X_k)} \\ \mod \sections{S^{\leqslant n-1}T_\M}
\end{multline}
for all $n\in\NN$ and all homogeneous elements $X_0,\dots,X_n$ in $\sections{T_\M}$.
\end{proposition}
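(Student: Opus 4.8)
The plan is to establish \eqref{Solferino} by induction on $n$ using the defining recursion \eqref{Europe}, and then to deduce \eqref{Stalingrad} from \eqref{Solferino} by inverting the filtered isomorphism $\pbw$. Note first that the leading (degree $n+1$) term $X_0\cdots X_n$ is already supplied by Lemma~\ref{Gambetta}, which identifies $\Gr(\pbw)$ with the symmetrization map; the genuinely new content of \eqref{Solferino} is the degree-$n$ term, and this is precisely where the hypothesis $T^\nabla=0$ must be used. The base case $n=1$ is a direct computation: expanding \eqref{Europe} gives $\pbw(X_0\odot X_1)=X_0 X_1-\tfrac12\big(\lie{X_0}{X_1}+\nabla_{X_0}X_1+(-1)^{\abs{X_0}\abs{X_1}}\nabla_{X_1}X_0\big)$, and substituting the torsion-free identity $\lie{X_0}{X_1}=\nabla_{X_0}X_1-(-1)^{\abs{X_0}\abs{X_1}}\nabla_{X_1}X_0$ collapses this to $X_0 X_1-\nabla_{X_0}X_1$, as required.

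For the inductive step I would apply \eqref{Europe} and insert the inductive hypothesis for each $\pbw(X^{\{m\}})$ (an $n$-fold product, so governed by the case $n-1$). Working modulo $\mathcal{U}^{\leqslant n-1}(T_\M)$, the degree-$n$ part of the right-hand side receives contributions from exactly three sources: (i) the commutator corrections produced when the leading term $X_m\cdot(X_0\cdots\widehat{X_m}\cdots X_n)$ is brought into normal order $X_0\cdots X_n$ inside $\enveloping{T_\M}$, each transposition $X_m X_i=(-1)^{\abs{X_m}\abs{X_i}}X_i X_m+\lie{X_m}{X_i}$ contributing one term of degree $n$; (ii) the image under left multiplication by $X_m$ of the degree-$(n-1)$ correction term furnished by the inductive hypothesis, which after normal-ordering yields the sums involving $\nabla_{X_j}X_l$ with $j,l\neq m$; and (iii) the term $\pbw\big(\nabla_{X_m}(X^{\{m\}})\big)$, which, upon expanding $\nabla_{X_m}$ as a derivation of the symmetric algebra and applying Lemma~\ref{Gambetta} to the resulting degree-$n$ symmetric tensors, becomes a sum of normal-ordered products with one factor replaced by $\nabla_{X_m}X_i$.

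The crux of the argument is to combine these three contributions. I would invoke torsion-freeness in the form $\lie{X_m}{X_i}=\nabla_{X_m}X_i-(-1)^{\abs{X_m}\abs{X_i}}\nabla_{X_i}X_m$ to rewrite every commutator from source (i) as a difference of connection terms; the $\nabla_{X_m}X_i$ halves then cancel against source (iii), while the remaining $\nabla_{X_i}X_m$ halves merge with source (ii). After summing over the recursion index $m\in\{0,\dots,n\}$ and dividing by $n+1$, one checks that the coefficient of each term $X_0\cdots\widehat{X_j}\cdots\widehat{X_k}\cdots X_n\cdot(\nabla_{X_j}X_k)$ equals $-1$, reproducing the right-hand side of \eqref{Solferino}. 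I expect the main obstacle to be the Koszul-sign bookkeeping in this reconciliation: one must verify that the signs $\epsilon_m$, the signs generated by normal-ordering, and the Koszul signs attached to the inductive correction terms all conspire to produce the single Koszul sign $\epsilon$ appearing in \eqref{Solferino}.

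Finally, \eqref{Stalingrad} follows formally from \eqref{Solferino}. Rearranging \eqref{Solferino} gives $X_0\cdots X_n\equiv\pbw(X_0\odot\cdots\odot X_n)+\sum_{j<k}\epsilon\,X_0\cdots\widehat{X_j}\cdots\widehat{X_k}\cdots X_n\cdot(\nabla_{X_j}X_k)$ modulo $\mathcal{U}^{\leqslant n-1}(T_\M)$. Applying the filtered isomorphism $\pbw\inv$, which carries $\mathcal{U}^{\leqslant n-1}(T_\M)$ into $\sections{S^{\leqslant n-1}T_\M}$ and whose associated graded is $\sym\inv$, the first term becomes $X_0\odot\cdots\odot X_n$, while each degree-$n$ product on the right is sent, modulo $\sections{S^{\leqslant n-1}T_\M}$, to the corresponding symmetric product (this is Lemma~\ref{Gambetta} read for $\pbw\inv$). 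This yields precisely \eqref{Stalingrad}.
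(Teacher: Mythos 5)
Your proposal is correct and follows essentially the same route as the paper: induction on $n$ via the recursion~\eqref{Europe}, with your three sources of degree-$n$ contributions corresponding exactly to the paper's Lemma~\ref{Pelleport} (normal-ordering commutators), the insertion of the inductive hypothesis, and Lemma~\ref{Simplon} (the $\nabla_{X_k}(X^{\{k\}})$ terms), torsion-freeness cancelling the bracket terms so that the coefficient of each $X^{\{j,k\}}\cdot(\nabla_{X_j}X_k)$ is $-(n+1)/(n+1)=-1$. The deduction of~\eqref{Stalingrad} by applying the filtered isomorphism $\pbw\inv$ together with~\eqref{Dupleix} is also exactly the paper's argument.
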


Throughout this section, we make use of the following simplified notations: 
\begin{itemize}
\item The symbol $\epsilon$ appearing in the terms of a sum 
will always denote the Koszul sign $\epsilon(\sigma;X_0,X_1,\cdots,X_n)$ of the permutation $\sigma$ 
of the order in which the homogeneous elements $X_0, X_1, \dots, X_n$ of $\sections{T_\M}$ 
appear in that term.
\item For every subset $\{i_1,i_2,\dots,i_k\}$ of $\{0,1,\dots,n\}$, 
the symbol $X^{\{i_1,i_2,\dots,i_k\}}$ denotes what remains of the product $X_0\cdots X_n$ after 
its factors $X_{i_1},X_{i_2},\dots,X_{i_k}$ have been erased. 
\end{itemize}

\begin{proposition}
For all homogeneous elements $Y,Z$ of $\sections{T_\M}$, we have 
\begin{gather} \label{Corvisart} \pbw(Y\odot Z) = Y\cdot Z -\nabla_Y Z +\tfrac{1}{2} T^\nabla(Y,Z) 
\\ \intertext{and} \label{Glaciere}
\pbw\inv(Y\cdot Z)=Y\odot Z+\nabla_Y Z-\tfrac{1}{2} 
T^\nabla(Y,Z)
.\end{gather}
\end{proposition}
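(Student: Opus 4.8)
The plan is to read off both identities directly from the defining recursion~\eqref{Europe} specialized to $n=1$, together with the elementary relation linking the associative product in $\enveloping{T_\M}$, the graded Lie bracket, and the torsion. First I would set $X_0=Y$ and $X_1=Z$ in~\eqref{Europe}. The base cases $\pbw(W)=W$ for $W\in\sections{T_\M}$ collapse the two summands ($k=0$ and $k=1$) to
\[ \pbw(Y\odot Z) = \tfrac{1}{2}\Big\{ Y\cdot Z - \nabla_Y Z + (-1)^{\abs{Y}\abs{Z}}\big(Z\cdot Y - \nabla_Z Y\big)\Big\}, \]
where the sign $(-1)^{\abs{Y}\abs{Z}}$ is the Koszul factor $\epsilon_1$. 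The only remaining task is then to convert this symmetric expression into the asymmetric right-hand side of~\eqref{Corvisart}.

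Next I would rewrite $(-1)^{\abs{Y}\abs{Z}}Z\cdot Y$ by means of the graded commutation relation $Y\cdot Z-(-1)^{\abs{Y}\abs{Z}}Z\cdot Y=\lie{Y}{Z}$ valid in $\enveloping{T_\M}$, and then eliminate the bracket using the definition of torsion, namely $\lie{Y}{Z}=\nabla_Y Z-(-1)^{\abs{Y}\abs{Z}}\nabla_Z Y-T^\nabla(Y,Z)$. Substituting and simplifying, the two covariant-derivative terms cancel against their partners and exactly one of the two torsion contributions survives, leaving $Y\cdot Z-\nabla_Y Z+\tfrac{1}{2}T^\nabla(Y,Z)$, which is~\eqref{Corvisart}.

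For the second identity~\eqref{Glaciere}, I would exploit that $\pbw$ is invertible (as established in the preceding proposition) and restricts to the identity on $\sections{T_\M}$. It therefore suffices to apply $\pbw$ to the claimed right-hand side and verify that the result is $Y\cdot Z$. Since $\nabla_Y Z$ and $T^\nabla(Y,Z)$ are single sections of $T_\M$, we have $\pbw(\nabla_Y Z)=\nabla_Y Z$ and $\pbw(T^\nabla(Y,Z))=T^\nabla(Y,Z)$, so by $\Bbbk$-linearity together with~\eqref{Corvisart} the proposed right-hand side maps to $\big(Y\cdot Z-\nabla_Y Z+\tfrac{1}{2}T^\nabla(Y,Z)\big)+\nabla_Y Z-\tfrac{1}{2}T^\nabla(Y,Z)=Y\cdot Z$, which yields~\eqref{Glaciere} upon applying $\pbw\inv$.

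The computation is entirely mechanical; the sole delicate point—and the only place an error could realistically intrude—is the bookkeeping of Koszul signs when passing between $Z\cdot Y$ and $Y\cdot Z$ and when invoking the graded torsion convention. Accordingly, I would carefully confirm that the sign $\epsilon_1=(-1)^{\abs{Y}\abs{Z}}$ and the torsion formula are used consistently with the conventions fixed in Definitions~\ref{Philadelphia} and~\ref{Tuileries}.
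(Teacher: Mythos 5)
Your proposal is correct and follows essentially the same route as the paper: specialize the recursion~\eqref{Europe} to $n=1$, symmetrize away $(-1)^{\abs{Y}\abs{Z}}Z\cdot Y$ via the graded commutator and the definition of $T^\nabla$, and then obtain~\eqref{Glaciere} by invertibility of $\pbw$ together with the fact that it is the identity on $\sections{S^{\leqslant 1}T_\M}$. (Incidentally, your sign in~\eqref{Glaciere} is the consistent one; the paper's own final display in this proof carries a $+\tfrac{1}{2}T^\nabla$ that contradicts its statement, a typo your verification step would have caught.)
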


\begin{proof}
It follows directly from Equation~\eqref{Europe} that 
\begin{align*}
\pbw(Y\odot Z) 
=& \tfrac{1}{2} \left\{ Y\pbw(Z)+
(-1)^{\abs{Y}\abs{Z}}
Z\pbw(Y)-\pbw(\nabla_Y Z)-
(-1)^{\abs{Y}\abs{Z}}\pbw(\nabla_Z Y) \right\} \\ 
=& \tfrac{1}{2} \{ Y\cdot Z+ (-1)^{\abs{Y}\abs{Z}} Z\cdot Y-\nabla_Y Z- (-1)^{\abs{Y}\abs{Z}} \nabla_Z Y \} \\ 
=& \tfrac{1}{2} \left\{ 2 YZ-\lie{Y}{Z}-2\nabla_Y Z+(\nabla_Y Z- (-1)^{\abs{Y}\abs{Z}}\nabla_Z Y) \right\} \\ 
=& Y\cdot Z-\nabla_Y Z+\tfrac{1}{2}\left\{\nabla_Y Z-(-1)^{\abs{Y}\abs{Z}}\nabla_Z Y-\lie{Y}{Z}\right\} \\ 
=& Y\cdot Z -\nabla_Y Z +\tfrac{1}{2} T^\nabla(Y,Z) 
.\end{align*}
Therefore, we obtain 
\begin{align*}
Y\cdot Z =& \pbw(Y\odot Z)+\underset{\in\sections{T_\M}}{\underbrace{\nabla_Y Z+\tfrac{1}{2}T^\nabla(Y,Z)}} \\ 
=& \pbw(Y\odot Z) +\pbw\big(\nabla_Y Z+\tfrac{1}{2}
T^\nabla(Y,Z)\big)
\end{align*}
and, applying $\pbw\inv$ to both sides, 
\[ \pbw\inv(Y\cdot Z)= Y\odot Z +\nabla_Y Z+\tfrac{1}{2}
T^\nabla(Y,Z) .\qedhere\]
\end{proof}

\begin{lemma}
For all homogeneous elements $X_1,\dots,X_n$ in $\sections{T_\M}$, we have 
\[ X_1\cdots X_n \equiv \epsilon\ X_1\cdots X_{k-1} X_{k+1} X_k X_{k+2} \cdots X_n 
\mod \mathcal{U}^{\leqslant n-1}(T_\M) .\] 
\end{lemma}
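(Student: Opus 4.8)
The plan is to reduce the statement to the single defining commutation relation of the universal enveloping algebra $\enveloping{T_\M}$ together with the multiplicativity of its order filtration. Since the permutation $\sigma$ implicit in the statement is the transposition exchanging the adjacent positions $k$ and $k+1$, its Koszul sign applied to the homogeneous elements $X_1,\dots,X_n$ is simply $\epsilon = (-1)^{\abs{X_k}\abs{X_{k+1}}}$; all the remaining factors are left in place and contribute no sign.

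First I would recall that, viewing $\mathcal{D}(\M)=\enveloping{T_\M}$ as the enveloping algebra of the Lie algebroid $T_\M$, the graded commutator of two sections agrees with the Lie bracket:
\[ X_k\cdot X_{k+1} - (-1)^{\abs{X_k}\abs{X_{k+1}}} X_{k+1}\cdot X_k = \lie{X_k}{X_{k+1}} \in\sections{T_\M} .\]
Using associativity and distributivity in $\enveloping{T_\M}$ — neither of which introduces extra signs upon regrouping, the scalar $\epsilon$ being freely movable — I would factor the common head $X_1\cdots X_{k-1}$ and tail $X_{k+2}\cdots X_n$ out of the difference to obtain
\[ X_1\cdots X_n - \epsilon\ X_1\cdots X_{k-1} X_{k+1} X_k X_{k+2}\cdots X_n = X_1\cdots X_{k-1}\, \lie{X_k}{X_{k+1}}\, X_{k+2}\cdots X_n .\]

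The right-hand side is a product of exactly $n-1$ elements of $\sections{T_\M}$, namely $X_1,\dots,X_{k-1},\lie{X_k}{X_{k+1}},X_{k+2},\dots,X_n$, so it lies in $\mathcal{U}^{\leqslant n-1}(T_\M)$ because the order filtration is multiplicative (a product of $m$ sections of $T_\M$ is a differential operator of order at most $m$). This yields the asserted congruence. There is no real obstacle here; the only points requiring attention are the sign bookkeeping — checking that the Koszul sign of an adjacent transposition of homogeneous elements is precisely $(-1)^{\abs{X_k}\abs{X_{k+1}}}$ and that the regrouping produces none — and the observation that $\lie{X_k}{X_{k+1}}$ is a \emph{single} section, which is exactly what lowers the filtration degree from $n$ to $n-1$.
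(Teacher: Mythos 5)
Your argument is exactly the paper's: both use the defining relation $X_k\cdot X_{k+1}=(-1)^{\abs{X_k}\abs{X_{k+1}}}X_{k+1}\cdot X_k+\lie{X_k}{X_{k+1}}$ in $\enveloping{T_\M}$ and observe that the resulting bracket term is a product of $n-1$ sections, hence lies in $\mathcal{U}^{\leqslant n-1}(T_\M)$. The proof is correct and coincides with the one in the paper.
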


\begin{proof}
It follows from 
$Y\cdot Z=(-1)^{\abs{Y}\abs{Z}}Z\cdot Y+\lie{Y}{Z}$ that 
\begin{equation*} X_1\cdots X_n=\epsilon\ X_1\cdots X_{k-1} X_{k+1} X_k X_{k+2} \cdots X_n 
+ \underset{\in\ \mathcal{U}^{\leqslant n-1}(T_\M)}{\underbrace{X_1\cdots X_{k-1}\lie{X_k}{X_{k+1}} X_{k+2} \cdots X_n}} .\qedhere\end{equation*} 
\end{proof}

\begin{corollary}\label{Pigalle} 
For all homogeneous elements $X_1,\dots,X_n$ in $\sections{T_\M}$, we have 
\begin{gather*} 
X_1\cdots X_n \equiv \epsilon\ X_1 \cdots \widehat{X_k} \cdots X_n \cdot X_k 
\mod \mathcal{U}^{\leqslant n-1}(T_\M) \\ 
\intertext{and, for all $\sigma\in S_n$,} 
X_1\cdots X_n \equiv \epsilon\ X_{\sigma(1)}\cdots X_{\sigma(n)} 
\mod \mathcal{U}^{\leqslant n-1}(T_\M)
.\end{gather*}
\end{corollary}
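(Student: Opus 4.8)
The plan is to deduce both congruences from the preceding Lemma, which asserts that transposing two adjacent factors in a product $X_1\cdots X_n$ in $\enveloping{T_\M}$ changes the product only by the Koszul sign of that transposition, modulo terms in the lower filtration level $\mathcal{U}^{\leqslant n-1}(T_\M)$. The key observation is that $\mathcal{U}^{\leqslant n-1}(T_\M)$ is an $\enveloping{T_\M}$-submodule stable under the filtration, so that any finite composition of adjacent transpositions accumulates only lower-order error terms; each such error lies in $\mathcal{U}^{\leqslant n-1}(T_\M)$, and a finite sum of such elements stays in $\mathcal{U}^{\leqslant n-1}(T_\M)$.

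For the second congruence, I would argue as follows. Every permutation $\sigma\in S_n$ factors as a product of adjacent transpositions $s_{i_1} s_{i_2} \cdots s_{i_r}$, where $s_i$ swaps the entries in positions $i$ and $i+1$. Applying the Lemma once for each adjacent transposition in this factorization, and tracking the Koszul sign multiplicatively, one obtains
\[ X_1\cdots X_n \equiv \epsilon(\sigma;X_1,\dots,X_n)\ X_{\sigma(1)}\cdots X_{\sigma(n)} \mod \mathcal{U}^{\leqslant n-1}(T_\M) ,\]
since the Koszul sign is a $1$-cocycle on $S_n$: the sign accumulated along any word in the $s_i$ representing $\sigma$ depends only on $\sigma$ and equals $\epsilon(\sigma;X_1,\dots,X_n)$. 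The accumulated error is a finite sum of elements of $\mathcal{U}^{\leqslant n-1}(T_\M)$, hence itself lies in $\mathcal{U}^{\leqslant n-1}(T_\M)$, which justifies passing to the congruence.

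The first congruence is then the special case of the second corresponding to the cyclic permutation that moves $X_k$ past $X_{k+1}, X_{k+2}, \dots, X_n$ to the last position while preserving the relative order of all the other factors; its Koszul sign is exactly the sign appearing in the statement. Alternatively, it follows by iterating the Lemma through the $n-k$ adjacent transpositions needed to shift $X_k$ rightward to the end.

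The main obstacle I anticipate is purely bookkeeping rather than conceptual: verifying that the Koszul sign behaves correctly under composition of transpositions, i.e.\ that the cocycle condition holds so that the product of signs collected along an arbitrary reduced word for $\sigma$ reproduces $\epsilon(\sigma;X_1,\dots,X_n)$. This is standard for the Koszul sign on a graded symmetric algebra and is implicit in the definition given in the Notations section, so it requires only a remark rather than a detailed computation.
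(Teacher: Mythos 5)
Your proposal is correct and follows exactly the route the paper intends: the corollary is stated without proof precisely because it is the immediate consequence of iterating the preceding lemma on adjacent transpositions, with the Koszul signs composing multiplicatively and the lower-order errors accumulating as a finite sum inside the vector subspace $\mathcal{U}^{\leqslant n-1}(T_\M)$. The only minor imprecision is your reference to $\mathcal{U}^{\leqslant n-1}(T_\M)$ as a submodule --- it is not stable under left multiplication by vector fields, but this is never needed since each transposition acts only on the leading term while the previously accumulated errors are simply carried along unchanged.
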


\begin{corollary}
For all homogeneous elements $X_1,\dots,X_n$ in $\sections{T_\M}$, we have 
\begin{gather} \label{Duroc} \pbw(X_1\odot \cdots \odot X_n)
\equiv X_1\cdots X_n \mod \mathcal{U}^{\leqslant n-1}(T_\M) 
\\ \intertext{and} \label{Dupleix}
X_1\odot \cdots \odot X_n \equiv 
\pbw\inv(X_1\cdots X_n) \mod \sections{S^{\leqslant n-1}T_\M}
.\end{gather} 
\end{corollary}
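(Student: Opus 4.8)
The plan is to read off both congruences from results already established, namely Lemma~\ref{Gambetta} and Corollary~\ref{Pigalle}, and then to transport the first congruence through the inverse of the formal exponential map. No new computation of the recursion \eqref{Europe} is needed.

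First I would establish \eqref{Duroc}. By Lemma~\ref{Gambetta}, the element $\pbw(X_1\odot\cdots\odot X_n)$ agrees, modulo $\enveloping{T_\M}^{\leqslant n-1}$, with the full symmetrization $\frac{1}{n!}\sum_{\sigma\in S_n}\epsilon\ X_{\sigma(1)}\cdots X_{\sigma(n)}$. On the other hand, the second congruence of Corollary~\ref{Pigalle} asserts that, for every $\sigma\in S_n$, one has $\epsilon\ X_{\sigma(1)}\cdots X_{\sigma(n)}\equiv X_1\cdots X_n \bmod \enveloping{T_\M}^{\leqslant n-1}$, the Koszul sign $\epsilon$ being exactly the one compensating the reordering of the factors. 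Adding these $n!$ congruences and dividing by $n!$, the symmetrization collapses to $X_1\cdots X_n$ modulo $\enveloping{T_\M}^{\leqslant n-1}$, which is precisely \eqref{Duroc}.

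Next I would deduce \eqref{Dupleix} by applying $\pbw\inv$. Since $\pbw$ is an isomorphism of filtered $C^\infty(\M)$-modules (the preceding Proposition, whose content is that $\Gr(\pbw)=\sym$ is an isomorphism), its inverse is again filtration-preserving and hence carries $\enveloping{T_\M}^{\leqslant k}$ into $\sections{S^{\leqslant k}T_\M}$. Writing \eqref{Duroc} in the form $\pbw(X_1\odot\cdots\odot X_n)=X_1\cdots X_n+R$ with $R\in\enveloping{T_\M}^{\leqslant n-1}$ and applying $\pbw\inv$ gives $X_1\odot\cdots\odot X_n=\pbw\inv(X_1\cdots X_n)+\pbw\inv(R)$, where $\pbw\inv(R)\in\sections{S^{\leqslant n-1}T_\M}$; this is exactly \eqref{Dupleix}.

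The statement is essentially a bookkeeping consequence of the two cited results, so there is no serious obstacle. The only points demanding care are the sign handling in the reordering step, where the Koszul sign in Corollary~\ref{Pigalle} must cancel precisely the sign generated by permuting the factors so that each of the $n!$ summands contributes $X_1\cdots X_n$ with coefficient $+1$, and the verification that $\pbw\inv$ respects the filtrations. The latter is guaranteed because $\pbw$ is an isomorphism in the category of filtered modules, equivalently because $\Gr(\pbw)=\sym$ is an isomorphism of graded modules while both filtrations are exhaustive and complete.
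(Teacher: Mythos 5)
Your proof is correct and follows essentially the same route as the paper: the paper likewise deduces \eqref{Duroc} by combining Lemma~\ref{Gambetta} with Corollary~\ref{Pigalle} to collapse the symmetrization to $X_1\cdots X_n$ modulo $\mathcal{U}^{\leqslant n-1}(T_\M)$, and \eqref{Dupleix} then follows by applying the filtration-preserving inverse $\pbw\inv$. Your added remarks on the sign bookkeeping and on why $\pbw\inv$ respects the filtration are accurate but not points of divergence.
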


\begin{proof}
It follows from Lemma~\ref{Gambetta} and Corollary~\ref{Pigalle} that 
\begin{equation*} \pbw(X_1\odot \cdots \odot X_n)
\equiv \frac{1}{n!}\sum_{\sigma\in S_n}\epsilon\ X_{\sigma(1)}\cdots X_{\sigma(n)}  
\equiv X_1\cdots X_n \mod \mathcal{U}^{\leqslant n-1}(T_\M) .\qedhere\end{equation*} 
\end{proof}

\begin{lemma}\label{Simplon}
For all homogeneous elements $X_0,\dots,X_n$ in $\sections{T_\M}$, we have 
\begin{multline*} \sum_{k=0}^n \epsilon\ 
\pbw\big(\nabla_{X_k}(X_0\odot\cdots\odot\widehat{X_k}\odot\cdots\odot X_n)\big) \\
\equiv \sum_{j<k} \epsilon\ X_0\cdots\widehat{X_j}\cdots\widehat{X_k}\cdots X_n \cdot 
\big(2 \nabla_{X_j}X_k -\lie{X_j}{X_k}-T^\nabla(X_j,X_k)\big)
\mod \mathcal{U}^{\leqslant n-1}(T_\M) 
.\end{multline*}
\end{lemma}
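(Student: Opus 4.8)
The plan is to rewrite both sides as sums indexed by pairs and match them term by term modulo $\mathcal{U}^{\leqslant n-1}(T_\M)$. First I would simplify the right-hand side using the very definition of torsion. Since
\[
T^\nabla(X_j,X_k)=\nabla_{X_j}X_k-(-1)^{\abs{X_j}\abs{X_k}}\nabla_{X_k}X_j-\lie{X_j}{X_k},
\]
one has the purely algebraic identity
\[
2\nabla_{X_j}X_k-\lie{X_j}{X_k}-T^\nabla(X_j,X_k)=\nabla_{X_j}X_k+(-1)^{\abs{X_j}\abs{X_k}}\nabla_{X_k}X_j,
\]
valid whether or not $\nabla$ is torsion-free. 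Hence the right-hand side of the lemma equals
\[
\sum_{j<k}\epsilon\ X_0\cdots\widehat{X_j}\cdots\widehat{X_k}\cdots X_n\cdot\big(\nabla_{X_j}X_k+(-1)^{\abs{X_j}\abs{X_k}}\nabla_{X_k}X_j\big),
\]
a manifestly symmetric expression in the two derivatives, which suggests pairing the ordered contributions $(j,k)$ and $(k,j)$ that will arise from the left-hand side.

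Next I would expand the left-hand side. Applying the graded Leibniz rule to each $\nabla_{X_k}\big(X_0\odot\cdots\odot\widehat{X_k}\odot\cdots\odot X_n\big)$ produces a double sum of symmetric monomials of symmetric degree $n$, each obtained from $X^{\{k\}}$ by replacing one factor $X_j$ (with $j\neq k$) by $\nabla_{X_k}X_j$ and carrying the Koszul sign of moving $X_k$ across the preceding factors. Since each such monomial lies in $\sections{S^n T_\M}$, I would invoke~\eqref{Duroc} to replace $\pbw$ of it by the corresponding associative product modulo $\mathcal{U}^{\leqslant n-1}(T_\M)$, turning every $\odot$ into a $\cdot$. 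Then, using Corollary~\ref{Pigalle}, I would move the factor $\nabla_{X_k}X_j$ to the rightmost position and restore the remaining factors to their standard order $X_0,\dots,\widehat{X_j},\dots,\widehat{X_k},\dots,X_n$, each transposition costing only a Koszul sign and an error lying in $\mathcal{U}^{\leqslant n-1}(T_\M)$. The upshot is that, modulo $\mathcal{U}^{\leqslant n-1}(T_\M)$, the left-hand side becomes a sum over ordered pairs $(k,j)$ of terms of the shape $\pm\,X_0\cdots\widehat{X_j}\cdots\widehat{X_k}\cdots X_n\cdot(\nabla_{X_k}X_j)$.

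Finally I would group this ordered sum by unordered pairs: for each $j<k$ the two ordered contributions $(j,k)$ and $(k,j)$ yield $X_0\cdots\widehat{X_j}\cdots\widehat{X_k}\cdots X_n\cdot\nabla_{X_j}X_k$ and $X_0\cdots\widehat{X_j}\cdots\widehat{X_k}\cdots X_n\cdot\nabla_{X_k}X_j$ respectively, which is precisely the symmetric combination obtained above for the right-hand side. The main obstacle is the sign bookkeeping: one must check that the three sources of signs — the Koszul factor $\epsilon_k=(-1)^{\abs{X_k}(\abs{X_0}+\cdots+\abs{X_{k-1}})}$ in front of $\pbw(\nabla_{X_k}(X^{\{k\}}))$, the Leibniz sign from differentiating past the factors preceding $X_j$ in $X^{\{k\}}$, and the transposition signs from Corollary~\ref{Pigalle} — combine to give exactly the Koszul sign $\epsilon$ of the standard-order term $X_0\cdots\widehat{X_j}\cdots\widehat{X_k}\cdots X_n\cdot\nabla_{X_j}X_k$, and that the ordered pair $(k,j)$ contributes the same term weighted by the extra relative factor $(-1)^{\abs{X_j}\abs{X_k}}$. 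Once this sign identity is verified, the two sides coincide term by term and the asserted congruence follows.
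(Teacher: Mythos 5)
Your proposal is correct and follows essentially the same route as the paper's proof: expand $\nabla_{X_k}(X^{\{k\}})$ by the Leibniz rule, pass from symmetric to associative products via Equation~\eqref{Duroc}, move each factor $\nabla_{X_k}X_j$ to the rightmost position using Corollary~\ref{Pigalle}, and combine the two ordered contributions of each pair $\{j,k\}$ through the identity $\nabla_{X_j}X_k+(-1)^{\abs{X_j}\abs{X_k}}\nabla_{X_k}X_j=2\nabla_{X_j}X_k-\lie{X_j}{X_k}-T^\nabla(X_j,X_k)$. The only (cosmetic) difference is that you rewrite the right-hand side via the torsion identity at the outset, whereas the paper invokes it at the end.
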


\begin{proof}
It follows from Equation~\eqref{Duroc} that  
\begin{multline*} \sum_{k=1}^n \pbw\big(Z_1\odot\cdots\odot Z_{k-1}\odot(\nabla_Y Z_k)\odot Z_{k+1}\odot\cdots\odot Z_n\big) \\ 
\equiv \sum_{k=1}^n Z_1\cdots Z_{k-1}\cdot (\nabla_Y Z_k)\cdot Z_{k+1}\cdots Z_n 
\mod \mathcal{U}^{\leqslant n-1}(T_\M) 
.\end{multline*}
Therefore, for every homogeneous elements $X_0,\dots,X_n$ in $\sections{T_\M}$, we have 
\[ \begin{split} 
\epsilon\ \pbw\big(\nabla_{X_k} & (X_0  \odot\cdots\odot\widehat{X_k}\odot\cdots\odot X_n)\big) \\ 
&\equiv \sum_{j=0}^{k-1}\epsilon\ X_0\cdots X_{j-1}(\nabla_{X_k}X_j)X_{j+1}\cdots\widehat{X_k}\cdots X_n \\
&\quad +\sum_{j=k+1}^{n}\epsilon\ X_0\cdots\widehat{X_k}\cdots X_{j-1}(\nabla_{X_k}X_j)X_{j+1}\cdots X_n 
\mod \mathcal{U}^{\leqslant n-1}(T_\M) .
\end{split} \] 
Summing over $k$, we obtain 
\[ \begin{split}
\sum_{k=0}^n \epsilon\ \pbw\big(\nabla_{X_k} & 
(X_0\odot\cdots\odot\widehat{X_k}\odot\cdots\odot X_n)\big) 
\\ &\equiv \sum_{j<k} \epsilon\ X_0\cdots X_{j-1}(\nabla_{X_k}X_j) X_{j+1}\cdots \widehat{X_k}\cdots X_n \\ 
&\quad + \sum_{k<j} \epsilon\ X_0\cdots \widehat{X_k}\cdots X_{j-1}(\nabla_{X_k}X_j) X_{j+1}\cdots X_n 
\mod \mathcal{U}^{\leqslant n-1}(T_\M) .
\end{split} \]
The desired result follows from Corollary~\ref{Pigalle} and the definition of the torsion $T^\nabla$. 
\end{proof}

\begin{lemma}\label{Pelleport}
For all homogeneous elements $X_0,\dots,X_n$ in $\sections{T_\M}$, we have 
\begin{multline*}
\sum_{k=0}^n \epsilon\ X_k\cdot X_0\cdots \widehat{X_k}\cdots X_n \\ 
\equiv (n+1) X_0\cdots X_n 
+\sum_{j<k}\epsilon\ X_0\cdots \widehat{X_j}\cdots \widehat{X_k}\cdots X_n\cdot 
\lie{X_k}{X_j} \mod \mathcal{U}^{\leqslant n-1}(T_\M) .
\end{multline*}
\end{lemma}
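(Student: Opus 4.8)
The plan is to move each factor $X_k$ from the front to the back of the
product one at a time, picking up commutators along the way, and then to
collect those commutators. The key tool is Corollary~\ref{Pigalle}, which
tells us that modulo $\mathcal{U}^{\leqslant n-1}(T_\M)$ we may reorder the
factors of a length-$n$ product freely, paying only the appropriate Koszul
sign. The right-hand side has the shape ``diagonal term'' plus ``commutator
corrections'', so I expect the diagonal term to come from the reordering and
the commutators to come from the error terms that Corollary~\ref{Pigalle}
discards at the top order but that we must track one level more carefully
here.

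First I would fix $k$ and rewrite the single summand
$\epsilon\ X_k\cdot X_0\cdots\widehat{X_k}\cdots X_n$. Using the identity
$Y\cdot Z=(-1)^{\abs{Y}\abs{Z}}Z\cdot Y+\lie{Y}{Z}$ repeatedly, I would
commute $X_k$ past $X_0,X_1,\dots,X_{k-1}$ in turn so as to restore the
natural order $X_0\cdots X_n$. Each transposition of $X_k$ with some $X_j$
(for $j<k$) produces two pieces: the reordered product, which carries exactly
the Koszul sign needed to turn $\epsilon\ X_k\cdot X_0\cdots\widehat{X_k}\cdots X_n$
into $X_0\cdots X_n$, and a commutator term in which the pair $X_j,X_k$ has
been replaced by $\lie{X_k}{X_j}$, leaving a product of length $n-1$. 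Because
the commutator term already sits in $\mathcal{U}^{\leqslant n-1}(T_\M)$, any
further reordering of its factors is free modulo
$\mathcal{U}^{\leqslant n-2}(T_\M)\subset\mathcal{U}^{\leqslant n-1}(T_\M)$ by
Corollary~\ref{Pigalle}; this is what lets me normalize each commutator term
to the canonical form $\epsilon\ X_0\cdots\widehat{X_j}\cdots\widehat{X_k}\cdots X_n\cdot\lie{X_k}{X_j}$.

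Summing the reordered leading pieces over $k\in\{0,\dots,n\}$ gives the
$(n+1)\,X_0\cdots X_n$ term, since there are $n+1$ values of $k$ and each
contributes one copy of $X_0\cdots X_n$ modulo lower order. Summing the
commutator pieces produces, for each $k$, a contribution from every $j<k$,
and running over all $k$ yields precisely $\sum_{j<k}\epsilon\
X_0\cdots\widehat{X_j}\cdots\widehat{X_k}\cdots X_n\cdot\lie{X_k}{X_j}$. The
main obstacle is bookkeeping the Koszul signs: I must check that the sign
$\epsilon$ attached to the original summand, composed with the signs
generated by each transposition, combines to exactly the Koszul sign of the
canonical target monomial. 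I would verify this by tracking the sign change of
a single adjacent transposition and arguing that the telescoping product of
these signs matches $\epsilon(\sigma;X_0,\dots,X_n)$ for the relevant
permutation $\sigma$; the degrees of the erased factors $X_j,X_k$ enter both
in moving $X_k$ into position and in identifying the residual commutator
$\lie{X_k}{X_j}$, and the point is that these two sign contributions are
consistent.
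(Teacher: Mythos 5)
Your proposal is correct and follows essentially the same route as the paper: commute each $X_k$ rightward into its natural position via $Y\cdot Z=(-1)^{\abs{Y}\abs{Z}}Z\cdot Y+\lie{Y}{Z}$, collect one commutator term for each $j<k$, sum over $k$ to get the $(n+1)X_0\cdots X_n$ term, and then invoke Corollary~\ref{Pigalle} to move each $\lie{X_k}{X_j}$ to the end of its monomial modulo $\mathcal{U}^{\leqslant n-1}(T_\M)$. The paper's proof is exactly this computation, written as a two-line display followed by the appeal to Corollary~\ref{Pigalle}.
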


\begin{proof}
We have 
\[ \begin{split} 
\sum_{k=0}^n \epsilon\ X_k\cdot X^{\{k\}} =& 
\sum_{k=0}^n \bigg\{ X_0\cdots X_n + \sum_{j=0}^{k-1} \epsilon\ X_0\cdots X_{j-1}\lie{X_k}{X_j}X_{j+1}\cdots \widehat{X_k}\cdots X_n \bigg\} \\ 
=& (n+1) X_0\cdots X_n + \sum_{j<k} \epsilon\ 
X_0\cdots X_{j-1}\lie{X_k}{X_j}X_{j+1}\cdots\widehat{X_k}\cdots X_n 
.\end{split} \] 
The desired result follows from Corollary~\ref{Pigalle}. 
\end{proof}

\begin{proof}[Proof of Proposition~\ref{Jasmin}]
We will proceed by induction on $n$. 
The result is true for $n=1$ by Equation~\eqref{Corvisart}. 
Now, the induction hypothesis  
\begin{equation*}
\pbw(Z_1\odot\cdots\odot Z_n) 
\equiv Z_1\cdots Z_n - \sum_{j<k} \epsilon\ Z_1\cdots\widehat{Z_j}\cdots\widehat{Z_k}\cdots Z_n \cdot 
(\nabla_{Z_j}Z_k) \mod \mathcal{U}^{\leqslant n-2}(T_\M) 
\end{equation*}
implies   
\begin{multline*} 
\sum_{k=0}^n \epsilon\ X_k\cdot\pbw(X_0\odot\cdot\odot\widehat{X_k}\odot\cdots\odot X_n) \\
\equiv \sum_{k=0}^n \epsilon\ X_k\cdot X_0\cdots\widehat{X_k}\cdots X_n 
- \sum_{k=0}^n \sum_{\substack{i<j \\ k\notin \{i,j\}}} \epsilon\ X_k 
\cdot X^{\{i,j,k\}}\cdot (\nabla_{X_i}X_j) \mod \mathcal{U}^{\leqslant n-1}(T_\M) 
.\end{multline*}
Then, making use of Lemma~\ref{Pelleport} and Lemma~\ref{Pigalle}, we obtain 
\begin{align*} 
& \sum_{k=0}^n \epsilon\ X_k\cdot\pbw(X_0\odot\cdot\odot\widehat{X_k}\odot\cdots\odot X_n) \\
\equiv& (n+1)X_0\cdots X_n+\sum_{j<k}\epsilon\ 
X^{\{j,k\}} 
\lie{X_k}{X_j} -\sum_{k=0}^n \sum_{\substack{i<j \\ k\notin \{i,j\}}} \epsilon\ 
X^{\{i,j\}}  
(\nabla_{X_i}X_j) \\ 
\equiv& (n+1)X_0\cdots X_n-\sum_{j<k}\epsilon\ 
X^{\{j,k\}} 
\lie{X_j}{X_k} -(n-1)\sum_{i<j}\epsilon\ 
X^{\{i,j\}} 
(\nabla_{X_i}X_j) \\
\equiv& (n+1)X_0\cdots X_n-\sum_{j<k}\epsilon\ 
X_0\cdots\widehat{X_j}\cdots\widehat{X_k}\cdots X_n
\big\{\lie{X_j}{X_k} +(n-1) \nabla_{X_j}X_k\big\} \\ 
& \mod \mathcal{U}^{\leqslant n-1}(T_\M) .
\end{align*} 
Combining this last result with the Equation~\eqref{Europe} 
and Lemma~\ref{Simplon}, we finally obtain 
\[ \begin{split} 
\pbw(X_0\odot \cdots X_n) =& \tfrac{1}{n+1} \sum_{k=0}^n \left\{ 
\epsilon\ X_k\cdot\pbw(X^{\{k\}}) - \epsilon\ \pbw\big( \nabla_{X_k}(X^{\{k\}}) \big) \right\} \\ 
\equiv & X_0\cdots X_n - \tfrac{1}{n+1} \sum_{j<k} \epsilon\ 
X^{\{j,k\}} \left\{(n-1) \nabla_{X_j}X_k + 2\nabla_{X_j}X_k -T^\nabla(X_j,X_k)\right\} \\ 
\equiv & X_0\cdots X_n - \sum_{j<k} \epsilon\ 
X_0\cdots\widehat{X_j}\cdots\widehat{X_k}\cdots X_n
\left\{ \nabla_{X_j}X_k - \tfrac{1}{n+1} T^\nabla(X_j,X_k) \right\} \\ 
& \mod \mathcal{U}^{\leqslant n-1}(T_\M) .
\end{split} \] 
The proof of Equation~\eqref{Solferino} is complete since $T^\nabla=0$. 
Finally, applying $\pbw\inv$ to both sides of 
Equation~\eqref{Solferino} and making use of 
Equation~\eqref{Dupleix} yields Equation~\eqref{Stalingrad}.
\end{proof}

\section{Emmrich-Weinstein theorem for graded manifolds}
\label{Washington}

Let $\M$ be a finite-dimensional graded manifold, 
let $(x_i)_{i\in\{1,\dots,n\}}$ be a set of local coordinates on $\M$ 
and let $(y_j)_{j\in\{1,\dots,n\}}$ be the induced local frame of $T_\M\dual$ 
regarded as fiberwise linear functions on 
$T_\M$. As in \cite{MR2102846}, we define
\[ \delta:\Omega^p(\M,S^q T\dual_\M) \to 
\Omega^{p+1}(\M,S^{q-1} T\dual_\M) \] 
and \[ \delta\inv: \Omega^p(\M,S^q T_\M\dual) \to \Omega^{p-1}(\M,S^{q+1} T_\M\dual) \] by 
\[ \delta=\sum_{i=1}^n dx_i\otimes \frac{\partial}{\partial y_i} 
\qquad \text{and} \qquad
\delta\inv=\frac{1}{p+q} \sum_{i=1}^n 
i_{\frac{\partial}{\partial x_i}}\otimes y_i \]
or, more precisely, 
\[ \delta (\omega\otimes f) = \sum_{i=1}^{n} 
(-1)^{\abs{\frac{\partial}{\partial y_i}}\abs{\omega}}\ 
dx_i\wedge\omega \otimes \frac{\partial}{\partial y_i} (f) \]
and 
\[ \delta\inv (\omega\otimes f) = \tfrac{1}{p+q} \sum_{i=1}^{n} (-1)^{\abs{y^i}\abs{\omega}} 
i_{\frac{\partial}{\partial x_{i}}}\omega \otimes y_i\cdot f \]
for all homogeneous $\omega\in\Omega^p(\M)$ 
and for all $f\in\Gamma(S^q T\dual_\M)$.
It's not difficult to check that the operators $\delta$ and $\delta\inv$ 
are well defined, 
i.e.\ independent of the choice of local coordinates, 
and can be extended to $\Omega^\bullet\big(\M,\End(\SM)\big)$. 
The operator $\delta$ has degree $+1$ while the operator $\delta\inv$ has degree $-1$. 
Note that the operators $\delta$ and $\delta\inv$ are \emph{not} inverse of each other. 

A connection $\nabla$ on the tangent bundle $T_\M$ of a graded manifold $\M$ 
determines a connection $\sections{T_\M}\times\sections{S(T_\M)}\to\sections{S(T_\M)}$ 
on $S(T_\M)$, also denoted $\nabla$ by abuse of notation, through the relation 
\[ \nabla_X (X_0\odot X_1 \odot \cdots \odot X_n) = 
\sum_{k=0}^n X_0 \odot \cdots \odot X_{k-1} \odot \nabla_{X}X_k \odot X_{k+1} \odot \cdots \odot X_n .\] 
We use the symbol $\coder$ to denote the covariant differential of the induced connection 
on the dual vector bundle $\hat{S}(T\dual_\M)$. 

Following the construction of Dolgushev \cite{MR2102846}, 
we prove the following proposition.

\begin{proposition}\label{Concorde}
Given a torsion-free connection $\nabla$ on the tangent bundle $T_\M$ of a graded manifold $\M$, 
there exists a unique element  
\[ \Anabla= \sum_{i=1}^n \sum_{\substack{J\in\NO^n \\ |J| \geqslant 2}} \sum_{k=1}^n A_{J,k}^i dx_i \otimes y^J \frac{\partial}{\partial y_{k}} \] 
of degree +1 in $\Omega^1(\M,\hat{S}^{\geqslant 2}(T\dual_\M)\otimes T_\M)$ such that $\delta^{-1}(\Anabla)=0$ 
and the operator \[ \DD:\Omega^\bullet(\M,\SM)\to\Omega^{\bullet+1}(\M,\SM) \] of degree $+1$ 
defined by $\DD=-\delta+\coder+\Anabla$ satisfies $\DD\circ\DD=0$. 
\end{proposition}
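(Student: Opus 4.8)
The plan is to realize $\DD$ as a Fedosov differential and to produce $\Anabla$ by the iterative scheme governed by the pair $(\delta,\delta\inv)$.

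I would begin by recording the elementary identities satisfied by $\delta$, $\delta\inv$ and $\coder$. A direct local computation gives $\delta\circ\delta=0$ and, because of the normalizing factor $\tfrac{1}{p+q}$ built into $\delta\inv$, the Hodge-type homotopy relation
\[ \delta\circ\delta\inv+\delta\inv\circ\delta=\id-\pi ,\]
where $\pi:\Omega^\bullet(\M,\SM)\to C^\infty(\M)$ is the projection onto the summand $\Omega^0(\M,S^0 T\dual_\M)$. I would also establish $\delta\circ\coder+\coder\circ\delta=0$; this is the single point at which the torsion-free hypothesis is used, the cancellation being exactly the symmetry of the Christoffel symbols expressing $T^\nabla=0$. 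All three operators being local, these identities are coordinate-independent.

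Next I would reformulate flatness. Expanding $\DD\circ\DD=(-\delta+\coder+\Anabla)^2$ and discarding the terms that vanish by $\delta\circ\delta=0$ and $\delta\circ\coder+\coder\circ\delta=0$, and writing $(\coder)^2=R^\nabla\wedge(-)$, the equation $\DD\circ\DD=0$ becomes the Fedosov curvature equation
\[ \delta(\Anabla)=R^\nabla+\coder(\Anabla)+\Anabla\wedge\Anabla ,\]
in which $\delta(\Anabla)$, $\coder(\Anabla)$ and $\Anabla\wedge\Anabla$ denote the graded commutators $[\delta,\Anabla]$, $[\coder,\Anabla]$ and $\tfrac12[\Anabla,\Anabla]$ obtained by viewing $\delta$, $\coder$ and $\Anabla$ as derivations of $\Omega^\bullet(\M,\SM)$.

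To solve this I would pass to a fixed-point form. Applying $\delta\inv$, and using the homotopy relation together with the gauge condition $\delta\inv(\Anabla)=0$ and the vanishing $\pi(\Anabla)=0$ (as $\Anabla$ is a $1$-form of positive symmetric degree), the curvature equation is equivalent to
\[ \Anabla=\delta\inv\big(R^\nabla+\coder(\Anabla)+\Anabla\wedge\Anabla\big) .\]
The key observation is that this equation is triangular for the filtration by symmetric degree in $\SM$: the operator $\coder$ preserves this degree, the wedge term and $\Anabla$ raise it, $R^\nabla$ has symmetric degree $1$, and $\delta\inv$ raises it by exactly one. Writing $\Anabla=\sum_{q\geqslant 2}\Anabla_q$ with $\Anabla_q$ of symmetric degree $q$, the equation reads $\Anabla_2=\delta\inv(R^\nabla)$ together with the recursion
\[ \Anabla_{q+1}=\delta\inv\Big(\coder(\Anabla_q)+\sum_{a+b=q+1}\Anabla_a\wedge\Anabla_b\Big) .\]
This determines each $\Anabla_q$ uniquely from terms of strictly lower symmetric degree, and since only finitely many terms contribute in each degree the series converges in the $\mathfrak m$-adic topology of $\SM$, yielding existence and uniqueness of $\Anabla$ subject to $\delta\inv(\Anabla)=0$.

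The main obstacle is the converse implication: that the $\Anabla$ so constructed genuinely satisfies $\DD\circ\DD=0$ rather than only the $\delta\inv$-projected equation. I would handle this in Fedosov's manner. Set $\mathcal R=\DD\circ\DD$. By construction $\delta\inv(\mathcal R)=0$. The Bianchi identity $\DD\circ\mathcal R=\mathcal R\circ\DD$, which is automatic since $\mathcal R=\DD\circ\DD$, unwinds to $\delta(\mathcal R)=\coder(\mathcal R)+[\Anabla,\mathcal R]$; applying $\delta\inv$ and the homotopy relation then gives $\mathcal R=\delta\inv\big(\coder(\mathcal R)+[\Anabla,\mathcal R]\big)$. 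Since the right-hand side has strictly larger symmetric degree than its argument, induction on the lowest symmetric degree occurring in $\mathcal R$, together with completeness of the $\mathfrak m$-adic filtration, forces $\mathcal R=0$. This Bianchi-plus-homotopy step is the crux; the remainder is bookkeeping with the two grading operators $\delta$ and $\delta\inv$.
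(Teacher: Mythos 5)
Your proposal is correct and is essentially the argument the paper intends: the paper gives no written proof of Proposition~\ref{Concorde} but simply defers to Dolgushev's construction, which is exactly the Fedosov iteration you carry out (the homotopy relation $\delta\delta\inv+\delta\inv\delta=\id-\pi$, the anticommutation $\delta\coder+\coder\delta=0$ from torsion-freeness, the fixed-point recursion on symmetric degree under the gauge condition $\delta\inv(\Anabla)=0$, and the Bianchi-plus-$\delta\inv$ step to upgrade the projected equation to $\DD\circ\DD=0$). No gaps; your identification of the Bianchi step as the crux matches the standard Fedosov--Dolgushev treatment.
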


Thus we obtain the cochain complex 
\begin{equation}\label{Raspail} \begin{tikzcd}
\Omega^0(\M,\SM) \arrow{r}{\DD} & \Omega^1(\M,\SM) \arrow{r}{\DD} & 
\Omega^2(\M,\SM) \arrow{r}{\DD} & \cdots
\end{tikzcd} \end{equation} 

Note that $\Anabla$ can be thought of as a $1$-form on $\M$ valued in fiberwise formal vector fields on $T_\M$ and hence acts on $\Omega^\bullet(\M,\SM)$, the forms on $\M$ valued in fiberwise formal functions on $T_\M$.

Consider the linear map  
$\sigma: \Omega^\bullet(\M,\SM) \to C^\infty(\M)$ 
of degree $0$ characterized by the relations 
\begin{equation}\label{Passy}
\begin{gathered}
\sigma(f\otimes 1)=f ,\quad\forall f\in C^\infty(\M) ; \\ 
\sigma(\omega \otimes y^J)=0 ,\quad\forall\omega\in\Omega^{\geqslant 1}(\M),\forall J\in\NO^n; \\ 
\sigma(f \otimes y^J)=0 ,\quad\forall f\in\Omega^0(\M),\forall J\in\NO^n 
\text{ s.t. }\abs{J}\geqslant 1
.\end{gathered}
\end{equation}

\begin{proposition}
\label{Voltaire}
For every $f\in C^\infty(\M)$, there exists a unique $\xi\in\Gamma\big(\SM\big)$ 
such that $\sigma(\xi)=f$ and $\DD(\xi)=0$.
\end{proposition}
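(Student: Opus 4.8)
The plan is to solve the equation $\DD(\xi) = 0$ with the initial condition $\sigma(\xi) = f$ by a Fedosov-style iteration, using the operator $\delta^{-1}$ as a contracting homotopy for $\delta$. First I would record the key algebraic fact underlying the whole construction: on $\Omega^\bullet(\M, \SM)$ the operators $\delta$ and $\delta^{-1}$ satisfy a Hodge-type decomposition
\[ \delta\delta\inv + \delta\inv\delta + \sigma = \id , \]
where $\sigma$ is the projection defined by \eqref{Passy}. This identity is checked directly on a basis element $dx_{i_1}\wedge\cdots\wedge dx_{i_p}\otimes y^J$ by counting: applying $\delta$ then $\delta\inv$ and $\delta\inv$ then $\delta$ produces the factors $\tfrac{p}{p+q}$ and $\tfrac{q}{p+q}$ respectively (with matching Koszul signs), whose sum is $1$ away from the kernel where both $p=0$ and $\abs{J}=0$, which is exactly the image of $\sigma$. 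In particular $\delta^2 = 0$ and $(\delta\inv)^2 = 0$.

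Next I would set up the iteration. Write $\xi = \sum_{q\geqslant 0}\xi_q$ with $\xi_q\in\Omega^0(\M, S^q T\dual_\M)$, so that $\sigma(\xi)=f$ forces $\xi_0 = f$. The equation $\DD\xi = (-\delta + \coder + \Anabla)\xi = 0$ is then solved degree by degree in the symmetric weight $q$. Since $\DD = -\delta + (\text{operators that raise } q)$, applying $\delta^{-1}$ and using the Hodge identity on the $0$-form piece (where $\sigma\xi = \xi_0 = f$ is already prescribed) rewrites $\DD\xi=0$ as a fixed-point equation
\[ \xi = f + \delta\inv\big(\coder + \Anabla\big)\xi . \]
Because $\coder$ preserves $q$ while raising the form-degree by one, and $\Anabla$ raises $q$ by at least one (as $\Anabla$ is valued in $\hat S^{\geqslant 2}(T\dual_\M)\otimes T_\M$, so its action on functions raises symmetric weight by at least $1$), while $\delta\inv$ always lowers the form-degree by one, the operator $\delta\inv(\coder + \Anabla)$ strictly increases the total weight $p+q$ on the relevant pieces. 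This guarantees the fixed-point iteration $\xi^{(N+1)} = f + \delta\inv(\coder+\Anabla)\xi^{(N)}$ converges in the $\mathfrak m$-adic topology of $\SM$, and the limit $\xi$ is the desired section; uniqueness follows because any two solutions differ by a $\DD$-flat section with $\sigma = 0$, which the same contraction argument forces to vanish.

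The step I expect to be the main obstacle is verifying cleanly that $\delta\inv(\coder + \Anabla)$ is genuinely weight-increasing on the space where we iterate, and that applying $\delta\inv$ does not destroy the normalization $\sigma(\xi) = f$. The subtlety is that $\delta\inv$ carries the prefactor $\tfrac{1}{p+q}$, so one must check that $\coder$ and $\Anabla$ never produce a term in bidegree $(p,q) = (0,0)$ on which $\delta\inv$ is undefined; this holds because $\coder$ raises $p$ to $1$ and $\Anabla$ raises $q$, so every term hit by $\delta\inv$ has $p+q\geqslant 1$. Once the Hodge identity and these bookkeeping facts are in place, convergence and uniqueness are formal consequences of completeness of $\SM$, and I would note that this argument is the graded-manifold analogue of Fedosov's recursion as adapted by Dolgushev in \cite{MR2102846}.
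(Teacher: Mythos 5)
Your overall strategy---the Fedosov--Dolgushev iteration built on the homotopy identity $\delta\delta\inv+\delta\inv\delta=\id-i\sigma$ and on the fact that $\delta\inv\circ(\coder+\Anabla)$ raises the symmetric weight---is exactly the construction the paper has in mind (Proposition~\ref{Voltaire} is obtained ``following the construction of Dolgushev,'' and your limit is precisely the series $\tau=\sum_{n\geqslant 0}\big(\delta\inv(\coder+\Anabla)\big)^n i$ that reappears in Theorem~\ref{Trocadero}). Your uniqueness argument is complete: the difference $\eta$ of two solutions is a $0$-form with $\sigma(\eta)=0$ and $\DD\eta=0$, hence $\eta=\delta\inv\delta\eta=\delta\inv(\coder+\Anabla)\eta$, and since this operator strictly raises the weight while $\SM$ is separated, $\eta=0$. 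Your bookkeeping about the bidegrees of $\coder$, $\Anabla$, and $\delta\inv$ is also correct.

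There is, however, a genuine gap in the existence half. Applying $\delta\inv$ to $\DD\xi=0$ does yield the fixed-point equation $\xi=f+\delta\inv(\coder+\Anabla)\xi$, but this passage is not reversible: since $\delta\inv$ is far from injective on $\Omega^1(\M,\SM)$, the fixed point $\xi$ is only guaranteed to satisfy $\delta\inv(\DD\xi)=0$ and $\sigma(\DD\xi)=0$, not $\DD\xi=0$. A symptom of the gap is that your argument never invokes $\DD\circ\DD=0$, whereas the existence of $\DD$-flat sections with arbitrarily prescribed image under $\sigma$ certainly requires the flatness established in Proposition~\ref{Concorde}. The standard repair is a second application of the same contraction argument: set $\eta:=\DD\xi\in\Omega^1(\M,\SM)$; then $\sigma(\eta)=0$ because $\eta$ is a $1$-form, $\delta\inv\eta=0$ by the fixed-point equation, and $\DD\eta=\DD^2\xi=0$, so the homotopy identity gives $\eta=\delta\inv\delta\eta=\delta\inv(\coder+\Anabla)\eta$ and the weight-raising property forces $\eta=0$. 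Alternatively, existence comes for free from the homological perturbation lemma used in Theorem~\ref{Trocadero}: the perturbed inclusion $\breve{\imath}=\sum_{n\geqslant 0}\big(\delta\inv(\coder+\Anabla)\big)^n i$ is automatically a chain map, so $\DD\breve{\imath}(f)=0$ and $\sigma\breve{\imath}(f)=f$ without further work.
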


Hence, there exists a unique map \[ \tau: C^\infty(\M)\to \Omega^0(\M,\SM) \] 
of degree $0$ satisfying $\sigma\circ\tau=\id_{C^\infty(\M)}$ and $\DD\circ\tau=0$. 
Furthermore, due to Proposition~\ref{Voltaire}, one can easily check that $\tau$ 
preserves the algebra structures.
 
\begin{corollary}
The map $\tau: C^\infty(\M)\to \Omega^0(\M,\SM)$ is a morphism of algebras.
\end{corollary}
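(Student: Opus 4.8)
The plan is to deduce the multiplicativity of $\tau$ entirely from the uniqueness clause in Proposition~\ref{Voltaire}. Recall that the algebra structure on $\Omega^0(\M,\SM)=\sections{\SM}$ is the fiberwise multiplication in the completed symmetric algebra $\SM$, and that $\tau(h)$ is characterized as the \emph{unique} $\DD$-flat section whose image under $\sigma$ is $h$. Thus, given $f,g\in C^\infty(\M)$, it suffices to verify that the product $\tau(f)\cdot\tau(g)\in\Omega^0(\M,\SM)$ is itself $\DD$-flat and satisfies $\sigma\big(\tau(f)\cdot\tau(g)\big)=fg$; uniqueness will then force $\tau(fg)=\tau(f)\cdot\tau(g)$.

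First I would establish that $\DD=-\delta+\coder+\Anabla$ is a derivation of degree $+1$ of the graded-commutative algebra $\Omega^\bullet(\M,\SM)$, that is
\[ \DD(a\cdot b)=\DD(a)\cdot b+(-1)^{\abs{a}}\, a\cdot\DD(b) \]
for all homogeneous $a,b$. This is checked one summand at a time: the fiberwise vector field $\tfrac{\partial}{\partial y_i}$ is a derivation of $\SM$, so $\delta=\sum_i dx_i\otimes\tfrac{\partial}{\partial y_i}$ is a form-valued derivation; the connection induced on $\SM$ from $\nabla$ respects the fiberwise algebra structure, so its covariant differential $\coder$ obeys the Leibniz rule; and $\Anabla$ takes values in fiberwise vector fields on $T_\M$, hence acts as a $1$-form valued derivation of $\SM$. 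The only delicate point is keeping track of the Koszul signs generated when a form, or a fiberwise derivation, is commuted past a homogeneous symmetric factor.

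Next I would record that the restriction of $\sigma$ to $\Omega^0(\M,\SM)$ is a unital algebra morphism onto $C^\infty(\M)$: by the defining relations \eqref{Passy}, $\sigma$ extracts the component of symmetric degree $0$, and the lowest-order term of a fiberwise product is the product of the lowest-order terms, so $\sigma(\xi\cdot\eta)=\sigma(\xi)\cdot\sigma(\eta)$ for $\xi,\eta\in\Omega^0(\M,\SM)$ and $\sigma(1)=1$. Combining the two facts, set $\xi=\tau(f)\cdot\tau(g)$; since $\tau(f)$ and $\tau(g)$ are $\DD$-flat and $\DD$ is a derivation,
\[ \DD(\xi)=\DD\big(\tau(f)\big)\cdot\tau(g)+(-1)^{\abs{f}}\,\tau(f)\cdot\DD\big(\tau(g)\big)=0 ,\]
while multiplicativity of $\sigma$ gives $\sigma(\xi)=\sigma\big(\tau(f)\big)\cdot\sigma\big(\tau(g)\big)=fg$. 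Uniqueness in Proposition~\ref{Voltaire} then yields $\tau(fg)=\tau(f)\cdot\tau(g)$. Applying the same reasoning to the constant section $1$, which is $\DD$-flat because every derivation annihilates the unit and which satisfies $\sigma(1)=1$, gives $\tau(1)=1$, so $\tau$ is a morphism of algebras.

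The main obstacle I anticipate is purely bookkeeping: confirming that $\DD$ genuinely satisfies the graded Leibniz rule for the fiberwise product in the $\ZZ$-graded setting, where the signs attached to moving forms and fiberwise vector fields past homogeneous symmetric factors must be reconciled consistently across the three summands $-\delta$, $\coder$, and $\Anabla$. Once this derivation property is secured, the conclusion follows formally from uniqueness, which is why the statement can fairly be described as an easy consequence of Proposition~\ref{Voltaire}.
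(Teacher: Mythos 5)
Your argument is correct and is precisely the ``easy check'' the paper has in mind: the paper derives the corollary from the uniqueness statement of Proposition~\ref{Voltaire}, and your verification that $\DD$ is a derivation of the fiberwise product, that $\sigma$ is multiplicative on $\Omega^0(\M,\SM)$, and hence that $\tau(f)\cdot\tau(g)$ is the unique $\DD$-flat section over $fg$, is the intended route. No gaps.
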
 
 
The cochain complex~\eqref{Raspail} may be augmented to  
\[ \begin{tikzcd}[column sep= small]
0 \arrow{r} & C^\infty(\M) \arrow{r}{\tau} 
& \Omega^0(\M,\SM) \arrow{r}{\DD} 
& \Omega^1(\M,\SM) \arrow{r}{\DD} 
& \Omega^2(\M,\SM) \arrow{r}{\DD} & \cdots
\end{tikzcd} \]

Inspired by the work of Emmrich \& Weinstein~\cite{MR1327535}, 
we proceed to prove that the coboundary operator $\DD$ and 
the map $\tau$ may be obtained directly 
from the formal exponential map $\pbw$. 

We start by defining a connection $\cntn$ on $S T_\M$ by 
\[ \cntn_X S := \pbw\inv\big(X\cdot\pbw(S)\big) \]
for all $X\in\Gamma(T_\M)$ and $S\in\Gamma(S T_\M)$. 

\begin{lemma}\label{Temple}
The connection $\cntn$ is flat.
\end{lemma}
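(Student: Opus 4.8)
The plan is to reduce flatness to the single algebraic fact that the graded commutator of two vector fields inside $\mathcal{D}(\M)$ is their Lie bracket. Recall from Definition~\ref{Philadelphia} that $\cntn$ is flat precisely when, for all homogeneous $X,Y\in\sections{T_\M}$, the operator
\[ \cntn_X\cntn_Y - (-1)^{\abs{X}\abs{Y}}\cntn_Y\cntn_X - \cntn_{\lie{X}{Y}} \]
vanishes on $\sections{S T_\M}$.

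First I would iterate the defining relation $\cntn_X S=\pbw\inv\big(X\cdot\pbw(S)\big)$. Since $\pbw$ and $\pbw\inv$ are mutually inverse, for any $S\in\sections{S T_\M}$ one has $\pbw(\cntn_Y S)=Y\cdot\pbw(S)$, whence
\[ \cntn_X\cntn_Y S=\pbw\inv\big(X\cdot\pbw(\cntn_Y S)\big)=\pbw\inv\big(X\cdot Y\cdot\pbw(S)\big), \]
where $X\cdot Y$ denotes the composition of the first-order differential operators $X$ and $Y$ in $\mathcal{D}(\M)=\enveloping{T_\M}$. The same computation yields $\cntn_Y\cntn_X S=\pbw\inv\big(Y\cdot X\cdot\pbw(S)\big)$ and $\cntn_{\lie{X}{Y}}S=\pbw\inv\big(\lie{X}{Y}\cdot\pbw(S)\big)$. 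Because $\pbw\inv$ is applied to all three terms with the \emph{same} right factor $\pbw(S)$, the bracketed expression collapses to
\[ \Big(\cntn_X\cntn_Y-(-1)^{\abs{X}\abs{Y}}\cntn_Y\cntn_X-\cntn_{\lie{X}{Y}}\Big)S=\pbw\inv\Big(\big(X\cdot Y-(-1)^{\abs{X}\abs{Y}}Y\cdot X-\lie{X}{Y}\big)\cdot\pbw(S)\Big). \]
It then remains to observe that in $\mathcal{D}(\M)=\enveloping{T_\M}$ the graded commutator of two vector fields coincides with their Lie bracket, i.e.\ $X\cdot Y-(-1)^{\abs{X}\abs{Y}}Y\cdot X=\lie{X}{Y}$; this is exactly the defining relation of the universal enveloping algebra of $T_\M$ regarded as a Lie algebroid. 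Hence the argument of $\pbw\inv$ is $0$, the bracketed operator vanishes, and the curvature $R^{\cntn}$ is zero.

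I do not anticipate a genuine obstacle: once the defining relation is iterated, the proof is essentially a one-line manipulation. The only points demanding care are the sign bookkeeping in the graded commutator and the verification that $X\cdot Y$ really is the product in $\enveloping{T_\M}$ and not some twisted operation; both are settled by the identification of $\mathcal{D}(\M)$ with $\enveloping{T_\M}$ recalled earlier in the paper. One might additionally remark, for completeness, that $\cntn$ is indeed a connection---$C^\infty(\M)$-linearity in $X$ and the Leibniz rule $\cntn_X(fS)=X(f)S+(-1)^{\abs{X}\abs{f}}f\cntn_X S$ in $S$---but these follow likewise from the left module-morphism property of $\pbw$ together with the relation $X\cdot f=X(f)+(-1)^{\abs{X}\abs{f}}f\cdot X$ in $\mathcal{D}(\M)$.
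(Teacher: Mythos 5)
Your proof is correct and is exactly the argument the paper intends: the paper gives no explicit proof of Lemma~\ref{Temple}, but its introduction describes $\cntn$ as the transfer through $\pbw$ of the left multiplication action of $\Gamma(T_\M)$ on $\mathcal{D}(\M)=\enveloping{T_\M}$, whose flatness is precisely the relation $X\cdot Y-(-1)^{\abs{X}\abs{Y}}Y\cdot X=\lie{X}{Y}$ that you invoke. Your closing remarks on $C^\infty(\M)$-linearity and the Leibniz rule via the left-module property of $\pbw$ are also the right justification that $\cntn$ is a connection in the first place.
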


Abusing notations, we will use the same symbol $\cntn$ to denote the induced flat connection on the dual bundle $\SM$.   

\begin{remark}\label{Pasteur}
Equation~\eqref{Glaciere} may be rewritten as 
\[ \cntn_Y Z=Y\odot Z+\nabla_Y Z-\tfrac{1}{2} 
T^\nabla(Y,Z) ,\quad\forall Y,Z\in\sections{T_\M} .\] 
\end{remark}

\begin{theorem}\label{Bastille}
Let $\M$ be a finite-dimensional graded manifold, let $\nabla$ be a \emph{torsionfree} connection on $T_{\M}$ (see Definition~\ref{Philadelphia}), 
and let $\pbw:\sections{ST_{\M}}\to\enveloping{T_\M}$ be the associated formal exponential map (see Definition~\ref{Tuileries}). 
Then the operator $\DD:\Omega^\bullet(\M,\SM)\to\Omega^{\bullet+1}(\M,\SM)$ arising from the torsionfree connection $\nabla$ 
(see Proposition~\ref{Concorde}) is the covariant differential associated with the flat connection $\cntn$ on $\SM$ arising from the 
formal exponential map $\pbw$, i.e.\ $\DD=\ds$. 
\end{theorem}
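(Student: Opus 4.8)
The plan is to establish the identity $\DD = \ds$ by comparing both operators term by term, exploiting the explicit description $\DD = -\delta + \coder + \Anabla$ from Proposition~\ref{Concorde} against the covariant differential $\ds$ of the flat connection $\cntn$ defined by $\cntn_X S = \pbw\inv\big(X\cdot\pbw(S)\big)$. Since both $\DD$ and $\ds$ are degree $+1$ operators on $\Omega^\bullet(\M,\SM)$ satisfying a Leibniz rule with respect to the wedge product of forms, it suffices to check that they agree on $\Omega^0(\M,\SM) = \sections{\SM}$, and for this it is enough to compare the associated covariant derivatives $\cntn_X$ and the operator $D$ restricted to sections, evaluated against an arbitrary tangent vector $X \in \sections{T_\M}$. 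Dualizing, I would equivalently verify that the connection on $S T_\M$ inducing $\ds$ coincides with the connection inducing $\DD$ on the dual bundle.

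The core computation is to expand both sides on a local frame. First I would write $\cntn_X(X_0\odot\cdots\odot X_n)$ using the definition of $\cntn$ together with Proposition~\ref{Jasmin}: applying \eqref{Solferino} to pass to $\enveloping{T_\M}$, left-multiplying by $X$, and then applying \eqref{Stalingrad} to return to $\sections{S T_\M}$. The point of invoking Proposition~\ref{Jasmin} is precisely that it controls the highest-order and next-to-highest-order terms modulo $\sections{S^{\leqslant n-1} T_\M}$, which is exactly the information needed to read off the connection coefficients. On the other side, $\DD = -\delta + \coder + \Anabla$ decomposes into the Koszul differential $\delta$ (which shifts symmetric degree down by one), the covariant differential $\coder$ of the naive connection $\nabla$ extended to $S(T_\M)$, and the correction $\Anabla$ carrying the higher symmetric powers. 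Dually, the $-\delta$ term corresponds to the symmetric multiplication $X \odot (-)$, the $\coder$ term corresponds to the naive $\nabla_X$, and $\Anabla$ should match the remaining curvature-type correction. Using Remark~\ref{Pasteur}, which rewrites \eqref{Glaciere} as $\cntn_Y Z = Y\odot Z + \nabla_Y Z - \tfrac{1}{2}T^\nabla(Y,Z)$, and the torsion-freeness hypothesis $T^\nabla = 0$, the lowest-degree comparison already reveals the $Y\odot Z$ (Koszul) and $\nabla_Y Z$ pieces.

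I would organize the argument by matching the three pieces separately. The symmetric-product piece $X \odot S$ appearing in $\cntn_X S$ should be identified, after dualization, with the $-\delta$ term in $\DD$; this is essentially the statement that $\delta$ is (up to sign and the pairing) the transpose of symmetric multiplication by the tautological section $\sum_i y_i \tfrac{\partial}{\partial y_i}$. The $\nabla_X S$ piece should match $\coder$. The remaining higher-order correction in $\cntn_X S$ — produced by the $\nabla_{X_j}X_k$ terms in Proposition~\ref{Jasmin} — should match $\Anabla$, and here the uniqueness clause of Proposition~\ref{Concorde} (the normalization $\delta\inv(\Anabla)=0$) can be leveraged: rather than computing $\Anabla$ explicitly, I would verify that the connection form associated with $\cntn$ satisfies both $\DD^2 = (\ds)^2 = 0$ (from flatness, Lemma~\ref{Temple}) and the normalization $\delta\inv = 0$, so that uniqueness forces the two to coincide.

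I expect the main obstacle to be the bookkeeping of Koszul signs when dualizing between $S T_\M$ and $\SM$ and when transposing the symmetric multiplication to obtain $\delta$. The degree conventions recorded in the Notations section — in particular the degree of $dx_{i_1}\wedge\cdots\wedge dx_{i_p}\otimes y^J\tfrac{\partial}{\partial y_q}$ and the sign in the definitions of $\delta$ and $\delta\inv$ — will have to be tracked with care, since a single misplaced sign would break the term-by-term matching. A secondary technical point is justifying that agreement on the lowest symmetric degrees (controlled by Proposition~\ref{Jasmin} modulo $\sections{S^{\leqslant n-1}T_\M}$) propagates to an equality of operators; the cleanest route is to avoid computing all orders directly and instead invoke the uniqueness in Proposition~\ref{Concorde}, reducing the problem to checking flatness (already available) together with the single normalization condition $\delta\inv(\Anabla_{\cntn})=0$, where $\Anabla_{\cntn}$ denotes the connection-form correction extracted from $\ds$.
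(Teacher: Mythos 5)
Your proposal follows essentially the same route as the paper: extract the correction term $i_X\nP(S)=\cntn_XS-X\odot S-\nabla_XS$, use Proposition~\ref{Jasmin} (and torsion-freeness, via Remark~\ref{Pasteur}) to see that it lands in symmetric degree $\geqslant 2$ after dualization, and then conclude $\DD=\ds$ from the uniqueness clause of Proposition~\ref{Concorde} once flatness (Lemma~\ref{Temple}) and the normalization $\delta\inv$ of the correction are checked. The only step you leave implicit is the actual verification that $\delta\inv$ annihilates the dualized correction; in the paper this is Proposition~\ref{Telegraphe}, which rests on the identity $\sum_k\epsilon\, i_{X_k}\nP(X^{\{k\}})=0$ obtained by applying $\pbw\inv$ to the defining recursion \eqref{Europe} — a short but genuinely necessary computation you should supply to complete the argument.
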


The proof of Theorem~\ref{Bastille} is deferred to Section~\ref{Convention}.

The operator $\DD$ on $\Omega^\bullet(\M,\SM)$ induces 
similar operators on $\Omega^\bullet(\M,\mathcal{T}_{\poly})$ 
and $\Omega^\bullet(\M,\mathcal{D}_{\poly})$. 
See for instance \cite{MR2102846}. 
Here $\mathcal{T}_{\poly}$ and $\mathcal{D}_{\poly}$
denote the bundles of fiberwise polyvector fields 
and fiberwise polydifferential operators on $T_\M$, respectively.  
These induced operators may be used to `globalize' Kontsevich's (local) formality theorem. 
The details will be discussed in a forthcoming work by Xu and the authors. 

As a corollary of Theorem~\ref{Bastille}, we obtain an extension of a result 
of Emmrich \& Weinstein to graded manifolds (see \cite{MR1327535}). 

\begin{corollary}\label{Bourse}
Let $\M$ be a finite-dimensional graded manifold,
let $(x_i)_{i\in\{1,\dots,n\}}$ be a set of local coordinates on $\M$ and let $(y_j)_{j\in\{1,\dots,n\}}$ be the induced local frame of $T_\M\dual$ regarded as fiberwise linear functions on 
$T_\M$. 
For all $f\in C^\infty(\M)$, we have 
\[ \tau(f) = \sum_{I\in\NO^n} \tfrac{1}{I!} 
y^{I} \otimes \pbw\Big(\underleftarrow{\px^I}\Big)(f) ,\] where 
\[ \underleftarrow{\px^I}=\underset{i_n \text{ factors}}{\underbrace{\partial_{x_n}\odot\cdots\odot\partial_{x_n}}}
\odot\underset{i_{n-1} \text{ factors}}{\underbrace{\partial_{x_{n-1}}\odot\cdots\odot\partial_{x_{n-1}}}}
\odot\cdots\odot\underset{i_1 \text{ factors}}{\underbrace{\partial_{x_1}\odot\cdots\odot\partial_{x_1}}} \] 
for $I=(i_1,i_2,\dots,i_n)\in\NO^n$.
\end{corollary}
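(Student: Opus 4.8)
The plan is to pin down $\tau(f)$ by its universal characterization. By Theorem~\ref{Bastille} the coboundary operator $\DD$ coincides with the covariant differential $\ds$ of the flat connection $\cntn$ on $\SM$, so a section $\xi\in\sections{\SM}$ satisfies $\DD\xi=0$ if and only if it is $\cntn$-flat, i.e.\ $\cntn_X\xi=0$ for all $X\in\sections{T_\M}$. Proposition~\ref{Voltaire} guarantees that there is exactly one $\cntn$-flat section $\xi$ with $\sigma(\xi)=f$, and that section is $\tau(f)$. Hence it is enough to show that the right-hand side
\[ \Phi(f):=\sum_{I\in\NO^n}\tfrac{1}{I!}\,y^I\otimes\pbw\big(\underleftarrow{\px^I}\big)(f) \]
is $\cntn$-flat and satisfies $\sigma(\Phi(f))=f$; uniqueness then forces $\Phi(f)=\tau(f)$.

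The normalization is immediate: the only summand surviving $\sigma$ is the one with $I=0$, for which $\underleftarrow{\px^0}=1$ and $\pbw(1)(f)=f$, so $\sigma(\Phi(f))=f$. For flatness I would pass to the duality pairing. The frame $(y_j)$ of $T_\M\dual$ is dual to $(\partial_{x_j})$, and extending this pairing to the symmetric algebras yields $\langle y^I,\px^J\rangle=\delta_{IJ}\,I!$ up to a Koszul sign; consequently the coefficient of $y^I$ in any $\xi=\sum_I y^I\otimes c_I$ is recovered as $c_I=\tfrac{1}{I!}\langle\xi,\px^I\rangle$ (again up to sign). Since $\cntn$ on $\SM$ is by definition dual to $\cntn$ on $\sections{S T_\M}$, a section $\xi$ is $\cntn$-flat precisely when the associated $C^\infty(\M)$-valued functional $\phi_\xi(S):=\langle\xi,S\rangle$ on $\sections{S T_\M}$ intertwines the two actions, $\phi_\xi(\cntn_X S)=\pm\,X\big(\phi_\xi(S)\big)$ for all $X$ and all $S$, the sign being the Koszul sign prescribed by the pairing.

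The key observation is that the functional $\phi(S):=\pbw(S)(f)$ has exactly this intertwining property, and for purely formal reasons. The connection $\cntn$ on $\sections{S T_\M}$ is defined by $\pbw(\cntn_X S)=X\cdot\pbw(S)$, the right-hand side being the composition of the differential operator $\pbw(S)$ with the vector field $X$ in $\enveloping{T_\M}$. Evaluating at $f$ gives
\[ \phi(\cntn_X S)=\pbw(\cntn_X S)(f)=\big(X\cdot\pbw(S)\big)(f)=X\big(\pbw(S)(f)\big)=X\big(\phi(S)\big). \]
Thus $\phi$ intertwines $\cntn_X$ with the action of $X$ on $C^\infty(\M)$, so (modulo the pairing sign) it is the functional of a $\cntn$-flat section $\xi$ of $\SM$. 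Reading off the coefficients of $\xi$ through the pairing gives the coefficient of $y^I$ as $\tfrac{1}{I!}\phi(\px^I)=\tfrac{1}{I!}\pbw(\underleftarrow{\px^I})(f)$, where the reversal $\underleftarrow{\px^I}$ is precisely the Koszul sign produced when pairing symmetric powers of the graded spaces $T_\M$ and $T_\M\dual$. Hence $\xi=\Phi(f)$, completing the identification $\Phi(f)=\tau(f)$.

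I expect the main obstacle to be bookkeeping rather than conceptual. One must fix a single sign convention for the graded duality pairing between $\sections{S T_\M}$ and $\SM$, verify that the dual connection produces exactly the intertwining relation with the correct Koszul sign, and confirm that this convention is the one yielding the reversal $\underleftarrow{\px^I}$ together with the factor $\tfrac{1}{I!}$ in the final formula. Once these conventions are fixed, the essential content --- that $\cntn$-flatness of $\tau(f)$ is equivalent to the defining relation $\pbw(\cntn_X S)=X\cdot\pbw(S)$ evaluated at $f$ --- makes the argument short. Specializing to the classical case $V=0$ recovers the Emmrich--Weinstein identification of $\tau$ with the Taylor expansion in geodesic normal coordinates.
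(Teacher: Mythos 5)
Your proposal is correct and follows essentially the same route as the paper: verify that the right-hand side satisfies $\sigma(\Phi(f))=f$ and $\ds(\Phi(f))=0$, then invoke Theorem~\ref{Bastille} together with the uniqueness statement of Proposition~\ref{Voltaire}. The paper dismisses both verifications as ``straightforward computations,'' whereas you actually supply the key point for flatness --- that $\pbw(\cntn_X S)(f)=X\big(\pbw(S)(f)\big)$ by the very definition of $\cntn$, combined with the pairing $\contraction{\underleftarrow{\px^I}}{y^J}=I!\,\delta_{I,J}$ of Lemma~\ref{Bercy} --- so your write-up is a faithful, slightly more detailed version of the same argument.
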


\begin{proof}
Straightforward computations yield  
\begin{gather*} 
\sigma\bigg(\sum_{I\in\NO^n} \tfrac{1}{I!} 
y^{I} \otimes \pbw\Big(\underleftarrow{\px^I}\Big)(f)\bigg)=f \\
\intertext{and} 
\ds\bigg(\sum_{I\in\NO^n} \tfrac{1}{I!} 
y^{I} \otimes \pbw\Big(\underleftarrow{\px^I}\Big)(f)\bigg)= 0 
\end{gather*}
for all $f\in C^\infty(\M)$.
Since $\ds=\DD$ and $\tau$ is the only map from $C^\infty(\M)$ to $\Omega^0(\M,\SM)$ 
satisfying $\sigma\circ\tau=\id_{C^\infty(\M)}$ and $\DD\circ\tau=0$, the desired result follows. 
\end{proof}

Specializing to classical (i.e.\ nongraded) manifolds, 
we recover the result of Emmrich \& Weinstein:

\begin{corollary}[Emmrich--Weinstein \cite{MR1327535}]
For a smooth (nongraded) manifold $M$, 
the map \[ \tau: C^\infty(M)\to \Omega^0(M,\SM) \] satisfies 
\[ \tau(f) = \sum_{I \in \NO^n} \tfrac{1}{I!} 
\pbw\big(\px^I\big)(f) \otimes y^{I} 
\]
for all $f\in C^\infty(M)$. 
\end{corollary}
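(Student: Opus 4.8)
The plan is to derive this corollary as the special case $V=0$ of Corollary~\ref{Bourse}. Comparing the two formulas, I see that the statement for a nongraded manifold differs from the general formula
\[ \tau(f)=\sum_{I\in\NO^n}\tfrac{1}{I!}\,y^{I}\otimes\pbw\big(\underleftarrow{\px^I}\big)(f) \]
only in two respects: the reversed multi-index product $\underleftarrow{\px^I}$ is replaced by $\px^I$, and the two factors on either side of $\otimes$ are interchanged. So the whole argument reduces to verifying that both of these modifications are harmless once all degrees vanish.

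First I would record that on an ordinary manifold $M$ every coordinate function $x_i$, and hence every induced fiberwise linear function $y_j$, is of degree $0$; consequently every Koszul sign $\epsilon(\sigma;\cdots)$ entering the constructions of $\pbw$ and of $\tau$ equals $+1$. In particular, the symmetric product $\odot$ on $\sections{ST_M}$ is commutative in the ordinary unsigned sense, so reversing the order of the factors has no effect and $\underleftarrow{\px^I}=\px^I$ in $\sections{ST_M}$ for every $I\in\NO^n$; hence $\pbw\big(\underleftarrow{\px^I}\big)=\pbw(\px^I)$.

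Next I would observe that, for each $I$, the element $\pbw(\px^I)(f)$ is an ordinary smooth function on $M$, that is, a central element of degree $0$. Since all degrees vanish, interchanging this coefficient with the monomial $y^I$ across $\otimes$ introduces no sign, so
\[ y^{I}\otimes\pbw\big(\underleftarrow{\px^I}\big)(f)=\pbw(\px^I)(f)\otimes y^{I} .\]
Substituting these two identities into the formula of Corollary~\ref{Bourse} gives exactly the asserted expression for $\tau(f)$. I do not expect any genuine obstacle here: the only thing requiring care is the bookkeeping of the sign conventions, all of which collapse to $+1$ in the nongraded setting, so the classical Emmrich--Weinstein formula is recovered at once.
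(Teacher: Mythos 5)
Your proposal is correct and matches the paper's (implicit) argument exactly: the paper derives this corollary by simply specializing Corollary~\ref{Bourse} to the nongraded case, where all Koszul signs are $+1$, so $\underleftarrow{\px^I}=\px^I$ and the degree-zero factors may be interchanged across $\otimes$ without sign. Your write-up just makes this bookkeeping explicit.
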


\section{Proof of Theorem~\ref{Bastille}}
\label{Convention}

\begin{lemma}\label{Javel}
Let $\M$ be a graded manifold and let $\nabla$ be 
a torsionfree connection on $T_\M$. 
For all $n\in\NN$ and $X_0,\dots,X_n\in\sections{T_\M}$, we have 
\[ \cntn_{X_0}(X_1\odot\cdots\odot X_n)
\equiv X_0\odot X_1\odot \cdots\odot X_n 
+\nabla_{X_0}(X_1\odot\cdots\odot X_n)
\mod \sections{S^{\leqslant n-1}T_\M} .\]
\end{lemma}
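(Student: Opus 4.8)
The plan is to unwind the definition $\cntn_{X_0}(S) = \pbw\inv\big(X_0 \cdot \pbw(S)\big)$ with $S = X_1 \odot \cdots \odot X_n$ and track everything modulo $\sections{S^{\leqslant n-1}T_\M}$, which under $\pbw\inv$ corresponds exactly to working modulo $\enveloping{T_\M}^{\leqslant n-1}$ on the universal enveloping side (since $\pbw$ respects the filtrations and is an isomorphism, it induces an isomorphism on each filtration quotient). So the whole computation should be carried out at the level of associated graded objects, where $\pbw$ is identified with the symmetrization map $\sym$ by Lemma~\ref{Gambetta}.

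First I would apply Proposition~\ref{Jasmin}, specifically Equation~\eqref{Stalingrad}, to rewrite $\pbw\inv(X_0 \cdot \pbw(S))$. The idea is to feed the product $X_0 \cdot \pbw(S)$ into $\pbw\inv$ after expressing $\pbw(S)$ in a form amenable to \eqref{Stalingrad}. Concretely, I would use Equation~\eqref{Solferino} to write $\pbw(X_1 \odot \cdots \odot X_n)$ as $X_1 \cdots X_n$ minus the torsion-type correction term, all modulo $\enveloping{T_\M}^{\leqslant n-1}$; but since $\nabla$ is torsionfree the correction simplifies. Left-multiplying by $X_0$ turns $X_1 \cdots X_n$ into $X_0 X_1 \cdots X_n$, a product of $n+1$ factors. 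Then I would apply \eqref{Stalingrad} to $\pbw\inv(X_0 X_1 \cdots X_n)$, which produces $X_0 \odot X_1 \odot \cdots \odot X_n$ plus a sum of terms of the form $\epsilon\ X_0\odot\cdots\odot\widehat{X_j}\odot\cdots\odot\widehat{X_k}\odot\cdots\odot X_n\odot(\nabla_{X_j}X_k)$ over pairs $j<k$, modulo $\sections{S^{\leqslant n-1}T_\M}$.

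The main bookkeeping step is then to isolate, among those correction terms, the ones that assemble into $\nabla_{X_0}(X_1\odot\cdots\odot X_n)$. By the definition of the induced connection on $S T_\M$ recalled just before Proposition~\ref{Concorde}, we have $\nabla_{X_0}(X_1\odot\cdots\odot X_n)=\sum_{k=1}^n \epsilon\ X_1\odot\cdots\odot(\nabla_{X_0}X_k)\odot\cdots\odot X_n$, which is precisely the part of the correction sum coming from pairs with $j=0$. The pairs with $1 \leqslant j < k$ give terms involving $\nabla_{X_j}X_k$ with $X_0$ still present as a symmetric factor, and I expect these to be absorbed modulo $\sections{S^{\leqslant n-1}T_\M}$ — or, more carefully, to cancel against the analogous lower-order corrections coming from the expansion of $\pbw(S)$ itself before multiplication by $X_0$. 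The hard part will be making sure the signs and the Koszul-sign conventions $\epsilon$ line up correctly across the left-multiplication by $X_0$ and across both invocations of Proposition~\ref{Jasmin}, and verifying that every term not matching $X_0\odot\cdots\odot X_n$ or $\nabla_{X_0}(X_1\odot\cdots\odot X_n)$ genuinely lies in $\sections{S^{\leqslant n-1}T_\M}$ rather than contributing at top order. Once the degree-$n$ part is shown to consist of exactly these two contributions, the congruence follows.
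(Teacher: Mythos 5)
Your proposal is correct and follows essentially the same route as the paper: both rest on Proposition~\ref{Jasmin}, left-multiplication by $X_0$, and the observation that the $j=0$ correction terms assemble into $\nabla_{X_0}(X_1\odot\cdots\odot X_n)$ while the $1\leqslant j<k$ terms produced by \eqref{Stalingrad} cancel against those coming from \eqref{Solferino} applied to $X_1\odot\cdots\odot X_n$. Of the two mechanisms you offer for disposing of the $1\leqslant j<k$ terms, only the cancellation works --- these terms lie in $\sections{S^{n}T_\M}$ and so cannot be absorbed modulo $\sections{S^{\leqslant n-1}T_\M}$ --- and that cancellation is precisely what the paper's proof implements by adding and subtracting the $j=0$ terms and then recognizing the bracketed expression as $\pbw(X_0\odot\cdots\odot X_n)$ before applying $\pbw\inv$.
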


\begin{proof}
According to Proposition~\ref{Jasmin}, we have 
\[ \pbw(X_1\odot\cdots\odot X_n)
\equiv X_1\cdots X_n -\sum_{j<k}\epsilon\ 
X^{\{j,k\}}\cdot(\nabla_{X_j}X_k) \mod 
\mathcal{U}^{\leqslant n-2}(T_\M) .\] 
Multiplying from the left by $X_0$, we obtain 
\begin{align*}
& X_0\cdot \pbw(X_1\odot\cdots\odot X_n) \\ 
\equiv & X_0\cdot X_1\cdots X_n-\sum_{0<j<k}\epsilon\ 
X^{\{j,k\}}\cdot(\nabla_{X_j}X_k) \mod 
\mathcal{U}^{\leqslant n-1}(T_\M) \\ 
\equiv & \bigg\{ X_0\cdots X_n-\sum_{j<k}\epsilon\ 
X^{\{j,k\}}\cdot(\nabla_{X_j}X_k) \bigg\}
+ \sum_{k=1}^n \epsilon\ X^{\{0,k\}}\cdot(\nabla_{X_0}X_k) 
\mod \mathcal{U}^{\leqslant n-1}(T_\M) 
.\end{align*} 
Making use of Equations~\eqref{Solferino} and~\eqref{Duroc}, 
the last congruence above becomes 
\begin{align*}
& X_0\cdot \pbw(X_1\odot\cdots\odot X_n) \\ 
\equiv & \pbw(X_0\odot\cdots\odot X_n)
+ \sum_{k=1}^n \epsilon\ 
\pbw\big(X_1\odot\cdots\odot\widehat{X_k}\odot\cdots
\odot X_n\odot(\nabla_{X_0}X_k)\big) 
\mod \mathcal{U}^{\leqslant n-1}(T_\M) \\ 
\equiv & \pbw\bigg(X_0\odot\cdots\odot X_n 
+ \sum_{k=1}^n \epsilon\ 
X_1\odot\cdots\odot(\nabla_{X_0}X_k)\odot\cdots
\odot X_n \bigg)
\mod \mathcal{U}^{\leqslant n-1}(T_\M) \\ 
\equiv & \pbw\big(X_0\odot\cdots\odot X_n 
+ \nabla_{X_0}(X_1\odot\cdots\odot X_n) \big)
\mod \mathcal{U}^{\leqslant n-1}(T_\M) 
.\end{align*} 
Therefore, we have 
\begin{multline*} \cntn_{X_0}(X_1\odot\cdots\odot X_n)
=\pbw\inv\big(X_0\cdot\pbw(X_1\odot\cdots\odot X_n)\big) \\ \equiv X_0\odot X_1\odot\cdots\odot X_n 
+ \nabla_{X_0}(X_1\odot\cdots\odot X_n) 
\\ \mod \sections{S^{\leqslant n-1}T_\M} 
.\qedhere\end{multline*}
\end{proof}

Lemma~\ref{Javel} above leads us to consider the map 
\[ \nP:\Gamma\big(S(T_\M)\big)\to
\Omega^1\big(\M,S(T_\M)\big) \] 
of degree $+1$ defined by 
\begin{equation}\label{Olympiades}
i_X\nP(S)= \cntn_X S -X\odot S -\nabla_X S 
,\end{equation} 
for all $X\in\sections{T_\M}$ and $S\in\sections{S T_\M}$.

Lemma~\ref{Javel} asserts that, if $\nabla$ is torsionfree,  $i_X\nP$ maps $\Gamma\big(S^k T_\M\big)$ 
to $\Gamma(S^{\leqslant k-1} T_\M)$, 
for all $X\in\sections{T_\M}$ and $k\in\NO$.

\begin{proposition}\label{Wagram}
For every $X\in\sections{T_\M}$, 
the operator $i_X\nP$ is a coderivation of the coalgebra $\sections{S T_\M}$. 
\end{proposition}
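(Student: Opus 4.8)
The plan is to write $i_X\nP$ as a combination of operators each of which is recognizably a coderivation, and then to lean on Theorem~\ref{Vavin}, which says that $\pbw$ is an isomorphism of coalgebras. By the defining relation~\eqref{Olympiades}, for every homogeneous $X\in\sections{T_\M}$ one has the identity of $\Bbbk$-linear operators
\[ i_X\nP = \cntn_X - L_X - \nabla_X \]
on $\sections{S T_\M}$, where $L_X$ denotes left multiplication $S\mapsto X\odot S$ in the symmetric algebra and $\nabla_X$ is the derivation extending the connection to $\sections{S T_\M}$. The first thing I would verify is that, although $\cntn_X$ and $\nabla_X$ are both genuinely first-order, their anchor contributions cancel: writing out $\cntn_X(fS)$ and $\nabla_X(fS)$ by the Leibniz rule of Definition~\ref{Philadelphia}, the two terms $X(f)\,S$ cancel, and since $L_X(fS)=(-1)^{\abs{X}\abs{f}}f\,(X\odot S)$ one obtains $i_X\nP(fS)=(-1)^{\abs{X}\abs{f}}f\cdot i_X\nP(S)$. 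Thus $i_X\nP$ is $C^\infty(\M)$-linear of degree $\abs{X}$, which is exactly what is needed for $i_X\nP\otimes\id$ and $\id\otimes i_X\nP$ to descend to the tensor product over $C^\infty(\M)$ and for the coderivation identity to make sense.

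The core is then the identity $\Delta\circ(i_X\nP)=\big(i_X\nP\otimes\id+\id\otimes i_X\nP\big)\circ\Delta$, which I would establish termwise. For $\cntn_X$ I would transport everything through $\pbw$: by definition $\pbw(\cntn_X S)=X\cdot\pbw(S)$, and since $X$ is primitive in $\enveloping{T_\M}$, multiplicativity of the coproduct~\eqref{Crimee} gives $\Delta\big(X\cdot\pbw(S)\big)=(1\otimes X+X\otimes 1)\cdot\Delta\big(\pbw(S)\big)$; feeding in $\Delta\circ\pbw=(\pbw\otimes\pbw)\circ\Delta$ from Theorem~\ref{Vavin} and using $X\cdot\pbw(T)=\pbw(\cntn_X T)$ on each Sweedler component, one identifies the right-hand side with $(\pbw\otimes\pbw)\big(\cntn_X\otimes\id+\id\otimes\cntn_X\big)\Delta(S)$; invertibility of $\pbw\otimes\pbw$ then yields the coderivation identity for $\cntn_X$. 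The very same computation with $\pbw$ replaced by the identity and $\enveloping{T_\M}$ replaced by $\sections{ST_\M}$ shows that $L_X$ is a coderivation, the only input being that $X$ is primitive for the symmetric coproduct written out just before Theorem~\ref{Vavin}. Finally, $\nabla_X$ is a coderivation because it is the derivation extension of an endomorphism of $\sections{T_\M}$: on each $(p,q)$-shuffle splitting of $X_1\odot\cdots\odot X_n$ the single factor hit by $\nabla_X$ lands in either the left or the right tensor slot, and summing over all splittings reproduces $\nabla_X\otimes\id+\id\otimes\nabla_X$. Adding the three identities and recombining gives the claim.

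The main obstacle is the bookkeeping forced by the base ring: neither $\cntn_X\otimes\id$ nor $\nabla_X\otimes\id$ is well defined over $C^\infty(\M)$ in isolation, and the coproduct takes values in the (Takeuchi-type) tensor product whose precise meaning is recalled in~\cite{MR1815717}. I would deal with this by carrying out each of the three computations as an identity inside that tensor product---where~\eqref{Crimee} and the shuffle combinatorics are legitimate---and only invoking the anchor cancellation of the first step at the end, to conclude that the sum of the three (individually base-ring-illegal) identities is an honest identity of $C^\infty(\M)$-linear coderivations. A secondary, entirely routine, difficulty is the Koszul sign accounting: since $i_X\nP$ has degree $\abs{X}$, the operator $\id\otimes i_X\nP$ acquires a sign $(-1)^{\abs{X}\abs{S_{(1)}}}$ on each Sweedler component, and these signs must be propagated consistently through the primitivity argument and the derivation extension.
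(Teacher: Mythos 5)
Your proof is correct and follows the paper's own argument exactly: the paper's one-line proof likewise decomposes $i_X\nP$ via Equation~\eqref{Olympiades} into $\cntn_X$, $\nabla_X$, and $S\mapsto X\odot S$ and asserts each is a coderivation of $\sections{S T_\M}$. You have simply supplied the details the paper leaves implicit (primitivity of $X$, transport of $\cntn_X$ through the coalgebra isomorphism $\pbw$ of Theorem~\ref{Vavin}, and the cancellation of anchor terms needed to work over $C^\infty(\M)$), all of which check out.
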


\begin{proof}
The operators $\cntn_X$, $\nabla_X$, 
and $S\mapsto X\odot S$ 
are coderivations of $\sections{S T_\M}$. 
\end{proof}

\begin{proposition}\label{Picpus}
If $T^\nabla=0$, then $\nP(S)=0$ 
for all $S\in\sections{S^{\leqslant 1}T_\M}$. 
\end{proposition}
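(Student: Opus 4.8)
The plan is to exploit the fact that a $1$-form is completely determined by its contractions with vector fields. Since $\nP(S)\in\Omega^1\big(\M,S(T_\M)\big)$, it vanishes if and only if $i_X\nP(S)=0$ for every $X\in\sections{T_\M}$. By the defining relation~\eqref{Olympiades}, this amounts to verifying that
\[ \cntn_X S - X\odot S - \nabla_X S = 0 \]
for all $X\in\sections{T_\M}$, and it suffices to treat separately the two cases $S\in\sections{S^0 T_\M}=C^\infty(\M)$ and $S\in\sections{S^1 T_\M}=\sections{T_\M}$, which together exhaust $\sections{S^{\leqslant 1}T_\M}$.

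First I would dispose of the vector-field case $S=Y\in\sections{T_\M}$. Here Remark~\ref{Pasteur}, which merely restates Equation~\eqref{Glaciere}, gives $\cntn_X Y = X\odot Y + \nabla_X Y - \tfrac12 T^\nabla(X,Y)$, whence $i_X\nP(Y) = -\tfrac12 T^\nabla(X,Y)$. This vanishes for every $X$ precisely because $\nabla$ is assumed torsionfree, so $\nP(Y)=0$. This is the one place where the torsionfreeness hypothesis is used.

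The remaining case $S=f\in C^\infty(\M)$ is a direct computation. Unwinding the definition of $\cntn$, one has $\cntn_X f = \pbw\inv\big(X\cdot\pbw(f)\big)=\pbw\inv(X\cdot f)$ since $\pbw(f)=f$. In the enveloping algebra $\enveloping{T_\M}$ the Lie-algebroid commutation relation reads $X\cdot f = (-1)^{\abs{X}\abs{f}} f\cdot X + X(f)$; using that $f\cdot X = fX$ together with the left $C^\infty(\M)$-linearity of $\pbw\inv$ (so $\pbw\inv(fX)=fX$ and $\pbw\inv(X(f))=X(f)$), this yields $\cntn_X f = (-1)^{\abs{X}\abs{f}} fX + X(f)$. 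On the other hand $X\odot f = (-1)^{\abs{X}\abs{f}} fX$ and $\nabla_X f = X(f)$, so the three terms cancel and $i_X\nP(f)=0$, giving $\nP(f)=0$. The only thing to watch is the bookkeeping of Koszul signs and the correct interpretation of $\cntn_X$, $X\odot(-)$ and $\nabla_X$ in symmetric degree $0$; since the commutation sign in $\enveloping{T_\M}$ and the transposition sign in $S(T_\M)$ are the same $(-1)^{\abs{X}\abs{f}}$, the cancellation is robust and there is no genuine obstacle.
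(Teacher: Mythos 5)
Your proof is correct and follows essentially the same route as the paper: the degree-one case is exactly the paper's argument via Remark~\ref{Pasteur} (i.e.\ Equation~\eqref{Glaciere}), reducing to $-\tfrac12 T^\nabla(X,Y)=0$. For the degree-zero case the paper merely notes that $\nP(1)=0$ "follows from the definitions," whereas you spell out the computation for an arbitrary $f\in C^\infty(\M)$ using the relation $X\cdot f=(-1)^{\abs{X}\abs{f}}f\cdot X+X(f)$ in $\enveloping{T_\M}$; this is a harmless elaboration of the same idea, not a different argument.
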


\begin{proof}
It follows from the definitions and Remark~\ref{Pasteur} 
that $\nP(1)=0$ and 
\[ i_Y\nP(Z)=\cntn_Y Z - Y\odot Z -\nabla_Y Z 
= -\tfrac{1}{2} T^\nabla(Y,Z) ,\] 
for all $Y,Z\in\sections{T_\M}$. 
\end{proof}

\begin{proposition}\label{Tolbiac} 
For all $n\in\NN$ and all homogeneous 
$X_0,\dots,X_n\in\sections{T_\M}$, we have 
\[ \sum_{k=0}^n \epsilon\ i_{X_k}\nP(X_0\odot\cdots\odot
\widehat{X_k}\odot\cdots\odot X_n) =0 .\] 
\end{proposition}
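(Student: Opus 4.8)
The plan is to expand everything directly from the definition of $\nP$ in Equation~\eqref{Olympiades} together with the definition $\cntn_X S=\pbw\inv\big(X\cdot\pbw(S)\big)$ of the connection $\cntn$, and then recognize that the resulting sum is organized precisely by the recursion~\eqref{Europe} defining $\pbw$. Write $X^{\{k\}}=X_0\odot\cdots\odot\widehat{X_k}\odot\cdots\odot X_n$ and $\epsilon_k=(-1)^{\abs{X_k}(\abs{X_0}+\cdots+\abs{X_{k-1}})}$, observing that the Koszul sign $\epsilon$ attached to the $k$-th summand in the statement is exactly $\epsilon_k$, since the permutation involved moves $X_k$ to the front past $X_0,\dots,X_{k-1}$.

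First I would substitute~\eqref{Olympiades} to write
\[ \sum_{k=0}^n \epsilon_k\, i_{X_k}\nP(X^{\{k\}})
= \sum_{k=0}^n \epsilon_k\Big(\cntn_{X_k}(X^{\{k\}})-X_k\odot X^{\{k\}}-\nabla_{X_k}(X^{\{k\}})\Big) \]
and treat the three resulting sums separately. The middle sum is immediate: since $X_k\odot X^{\{k\}}=\epsilon_k\,(X_0\odot\cdots\odot X_n)$ by the Koszul rule and $\epsilon_k^2=1$, one gets $\sum_{k=0}^n \epsilon_k\,X_k\odot X^{\{k\}}=(n+1)\,X_0\odot\cdots\odot X_n$.

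The main step is the first sum. Using $\cntn_{X_k}(X^{\{k\}})=\pbw\inv\big(X_k\cdot\pbw(X^{\{k\}})\big)$ and the $C^\infty(\M)$-linearity of $\pbw\inv$, I would rewrite the recursion~\eqref{Europe} as
\[ \sum_{k=0}^n \epsilon_k\,X_k\cdot\pbw(X^{\{k\}})
=(n+1)\,\pbw(X_0\odot\cdots\odot X_n)
+\sum_{k=0}^n \epsilon_k\,\pbw\big(\nabla_{X_k}(X^{\{k\}})\big) \]
and apply $\pbw\inv$ to both sides to obtain
\[ \sum_{k=0}^n \epsilon_k\,\cntn_{X_k}(X^{\{k\}})
=(n+1)\,X_0\odot\cdots\odot X_n
+\sum_{k=0}^n \epsilon_k\,\nabla_{X_k}(X^{\{k\}}) .\]
Substituting the three evaluated sums back, the two copies of $(n+1)\,X_0\odot\cdots\odot X_n$ cancel, as do the two copies of $\sum_{k}\epsilon_k\,\nabla_{X_k}(X^{\{k\}})$, leaving $0$.

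The only delicate point is the bookkeeping of Koszul signs --- in particular verifying that the sign $\epsilon_k$ governing the recursion~\eqref{Europe} coincides with the one attached to the $k$-th summand in the statement and with the one produced by the reordering $X_k\odot X^{\{k\}}\mapsto X_0\odot\cdots\odot X_n$. Note that no torsion-freeness is required, consistent with the hypotheses of the proposition: the identity is purely a reflection of the defining recursion for $\pbw$.
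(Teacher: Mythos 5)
Your argument is correct and is essentially the paper's own proof: both apply $\pbw\inv$ to the recursion~\eqref{Europe}, identify $\pbw\inv\big(X_k\cdot\pbw(X^{\{k\}})\big)$ with $\cntn_{X_k}(X^{\{k\}})$, rewrite $(n+1)\,X_0\odot\cdots\odot X_n$ as $\sum_k\epsilon_k\,X_k\odot X^{\{k\}}$, and conclude from the definition~\eqref{Olympiades} of $\nP$. The sign bookkeeping you flag is handled exactly as you describe, and you are right that torsion-freeness is not needed.
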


\begin{proof} 
Applying $\pbw\inv$ to both sides 
of Equation~\eqref{Europe}, we obtain 
\[ (n+1)\ X_0 \odot\cdots\odot X_n = 
\sum_{k=0}^{n} \epsilon\ 
\Big\{ \pbw\inv\big(X_k\cdot\pbw(X^{\{k\}})\big) 
-\nabla_{X_k}(X^{\{k\}}) \Big\} ,\] 
which we may rewrite as 
\[ \sum_{k=0}^{n} \epsilon\ X_k\odot X^{\{k\}} 
= \sum_{k=0}^{n} \epsilon\ 
\Big\{ \cntn_{X_k} (X^{\{k\}}) 
-\nabla_{X_k}(X^{\{k\}}) \Big\} .\]
The desired result follows from Equation~\eqref{Olympiades}.
\end{proof}

Now consider the map 
\[ \nA : \sections{S^\bullet (T\dual_\M)} \to 
\Omega^1\big(\M,\hat{S}^{\geqslant \bullet+1} 
(T\dual_\M)\big) \]
of degree $+1$ defined by 
\begin{equation}\label{Rambuteau} 
\contraction{S}{i_X \nA(\sigma)}
=(-1)^{\abs{S}\abs{X}}\contraction{i_X\nP(S)}{\sigma} 
,\end{equation}
for all homogeneous $X\in\sections{T_\M}$, $S\in\sections{S T_\M}$, and $\sigma\in\Gamma(\SM)$. 
Here 
\[ \Gamma(S T_\M)\otimes_{C^\infty(\M)}
\Gamma(\SM)\xto{\contraction{-}{-}} 
C^\infty(\M) \] 
is the duality pairing defined by 
\[ \contraction{X_1\odot\cdots\odot X_p}
{\alpha_1\odot\cdots\odot\alpha_q} = 
\begin{cases} 
\sum_{\sigma\in S_p} \epsilon\ 
i_{X_1}\alpha_{\sigma(1)}\cdot
i_{X_2}\alpha_{\sigma(2)}\cdots 
i_{X_p}\alpha_{\sigma(p)} 
& \text{if } p=q \\ 
0 & \text{if } p\neq q 
\end{cases} \] 
for all homogeneous $X_1,\dots,X_p\in\sections{T_\M}$ 
and $\alpha_1,\dots,\alpha_q\in\sections{T_\M}$. 
The factor $\epsilon$ in the equation above
denotes the Koszul sign of the 
permutation of homogeneous elements 
\[ X_1,X_2,\dots,X_p,\alpha_1,\alpha_2,\dots,\alpha_p 
\quad\longmapsto\quad X_1,\alpha_{\sigma(1)},X_2,\alpha_{\sigma(2)},
\dots,X_p,\alpha_{\sigma(p)} .\] 

A straightforward computation yields the following lemma. 

\begin{lemma}\label{Bercy}
Let $\M$ be a finite-dimensional graded manifold,
let $(x_i)_{i\in\{1,\dots,n\}}$ be a set of local coordinates on $\M$ and let $(y_j)_{j\in\{1,\dots,n\}}$ be the induced local frame of $T_\M\dual$ regarded as fiberwise linear functions on 
$T_\M$. For all $I,J\in\NO^n$ such that $\abs{I}=\abs{J}$, we have 
\[ \contraction{\underleftarrow{\partial_x^I}}{y^J}=I!\ \delta_{I,J} .\]  
\end{lemma}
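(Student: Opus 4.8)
The plan is to expand the pairing $\contraction{\underleftarrow{\partial_x^I}}{y^J}$ directly from its definition and then separate the combinatorial count from the sign bookkeeping. Writing $\underleftarrow{\partial_x^I}=X_1\odot\cdots\odot X_p$ with the indices of $X_1,\dots,X_p$ running through $n$ (repeated $i_n$ times), then $n-1$, down to $1$, and writing $y^J=\alpha_1\odot\cdots\odot\alpha_q$ with indices running through $1,\dots,n$ in increasing order, the defining formula reduces every summand to a product of elementary contractions $i_{\partial_{x_a}}y_b=\delta_{ab}$. Such a summand is nonzero only when the permutation $\sigma$ matches each factor $\partial_{x_a}$ to a factor $y_a$ of the same index; this is possible precisely when the two multisets of indices agree, i.e.\ when $I=J$. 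When $I\neq J$ every summand vanishes, which accounts for the factor $\delta_{I,J}$.

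First I would count the surviving permutations in the case $I=J$. The factors of $\underleftarrow{\partial_x^I}$ carrying a fixed index $a$ form a block of size $i_a$, and likewise for $y^J$; a matching $\sigma$ must send each block to its counterpart, so the surviving permutations are exactly the $\prod_a i_a!=I!$ elements of $\prod_a S_{i_a}$. Each such term has all its elementary contractions equal to $1$, so the whole statement comes down to showing that every surviving permutation contributes with Koszul sign $+1$. Note that whenever $i_a\geqslant 2$ the coordinate $x_a$ is necessarily even, since odd elements square to zero in the graded-symmetric algebra $\sections{S T_\M}$; hence permuting within a single block is sign-free and all $I!$ terms carry the same sign.

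The crux, and the main obstacle, is this sign computation, and it is exactly where the reversed product $\underleftarrow{\partial_x^I}$ (rather than $\partial_x^I$) earns its keep. The key structural observation is that each matched pair $\partial_{x_a},y_a$ has total degree $\abs{\partial_{x_a}}+\abs{y_a}=-\abs{x_a}+\abs{x_a}=0$. I would compute the Koszul sign of the rearrangement $X_1,\dots,X_p,\alpha_1,\dots,\alpha_p\mapsto X_1,\alpha_{\sigma(1)},\dots,X_p,\alpha_{\sigma(p)}$ in two stages: first reorder the $y$-factors from their native increasing-index order into the decreasing-index order dictated by the $\partial$-factors, then interleave. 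Because the $\partial$-blocks are arranged in the opposite order to the $y$-blocks, the sign of the first stage equals $(-1)^{\sum_{a<b} i_a i_b \abs{x_a}\abs{x_b}}$, and the even-pair observation makes the interleaving sign equal to the very same quantity; the two therefore cancel and leave $+1$. Had one used $\partial_x^I$ in the forward order, the first stage would be trivial and an uncancelled sign would survive, as one already sees for two odd coordinates.

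Collecting these three steps gives $\contraction{\underleftarrow{\partial_x^I}}{y^J}=I!\,\delta_{I,J}$. The only genuinely delicate point is the sign cancellation in the last paragraph; the rest is a direct unwinding of the definitions, so I expect the bulk of the write-up to consist of making the two-stage sign bookkeeping precise.
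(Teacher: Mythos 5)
Your proof is correct, and the paper itself offers no argument here (it merely labels the lemma ``a straightforward computation''), so your write-up is essentially the intended unwinding of the definition of $\contraction{-}{-}$: the surviving permutations form $\prod_a S_{i_a}$, the within-block permutations are sign-free because any coordinate occurring with multiplicity $\geqslant 2$ must be even, and the block-reversal sign $(-1)^{\sum_{a<b}i_ai_b\abs{x_a}\abs{x_b}}$ cancels against the interleaving sign --- which is precisely the reason the reversed product $\underleftarrow{\px^I}$ appears in the statement. No gaps.
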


\begin{lemma}\label{Goncourt}
For all homogeneous $X\in\sections{T_\M}$, $S\in\sections{S T_\M}$, and $\sigma\in\Gamma\big(\hat{S}(T\dual_\M)\big)$, we have
\[ \contraction{S}{i_X\delta(\sigma)}
=(-1)^{\abs{S}\abs{X}}\contraction{X\odot S}{\sigma}
.\] 
\end{lemma}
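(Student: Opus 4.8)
The plan is to recognize this identity as the adjunction, with respect to the duality pairing $\contraction{-}{-}$, between fiberwise contraction by $X$ on $\SM$ and symmetric multiplication by $X$ on $\sections{S T_\M}$. The first step is to compute $i_X\delta(\sigma)$ explicitly. Writing $X=\sum_j X^j\,\partial_{x_j}$ with $X^j\in C^\infty(\M)$ and using the local formula $\delta=\sum_i dx_i\otimes\frac{\partial}{\partial y_i}$, one observes that $\frac{\partial}{\partial y_i}$ is nothing but fiberwise contraction by $\partial_{x_i}$; applying $i_X$ then contracts the one-form factor $dx_i$ to its coefficient $X^i$. Thus $i_X\delta(\sigma)$ is the image of $\sigma$ under the degree-$\abs{X}$ graded derivation of $\SM$ sending each $\alpha\in\sections{T\dual_\M}$ to $i_X\alpha$, i.e.\ $i_X\delta$ is fiberwise contraction by $X$. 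Since $\delta$ and $i_X$ are both coordinate-independent, this identification is global, and the lemma becomes the statement that contraction by $X$ is the transpose, up to the sign $(-1)^{\abs{S}\abs{X}}$, of the multiplication operator $S\mapsto X\odot S$.

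Because both sides are tensorial (linear over $C^\infty(\M)$) in each of $X$, $S$, and $\sigma$, it then suffices to verify the identity on coordinate data, say $X=\partial_{x_m}$, $S=\underleftarrow{\px^I}$, and $\sigma=y^J$. For such data the left-hand side reduces, by the previous step, to a multiple of $\contraction{\underleftarrow{\px^I}}{y^{J-e_m}}$, which Lemma~\ref{Bercy} evaluates explicitly and which is nonzero precisely when $J=I+e_m$. On the right-hand side, merging $\partial_{x_m}$ into $\underleftarrow{\px^I}$ and reordering into decreasing index order produces $\underleftarrow{\px^{I+e_m}}$ up to a Koszul sign, and Lemma~\ref{Bercy} again evaluates $\contraction{\underleftarrow{\px^{I+e_m}}}{y^J}$. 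The two sides are then supported on the same Kronecker delta and carry the same numerical factor $(I_m+1)\,I!$, so the entire content of the lemma is that the accompanying signs combine to $(-1)^{\abs{S}\abs{X}}$.

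The hard part will be exactly this sign bookkeeping in the graded setting: one must track the Koszul signs arising from the interior products in $\delta$ and in the definition of $\contraction{-}{-}$, from the graded operator $\frac{\partial}{\partial y_i}$ acting on $y^J$, and from reordering $\partial_{x_m}\odot\underleftarrow{\px^I}$, and check that they cancel against the prefactor $(-1)^{\abs{S}\abs{X}}$. In the ungraded case every such sign is $+1$ and the identity is immediate from Lemma~\ref{Bercy}. A cleaner, coordinate-free route to the general case is to establish the adjunction directly from the shuffle formula defining $\contraction{-}{-}$: expanding $\contraction{X\odot S}{\sigma}$ and singling out the factor $X$ contracts it against each cotangent factor of $\sigma$ in turn, which is precisely $\contraction{S}{i_X\delta(\sigma)}$ once $X$ has been moved past $S$, thereby producing the sign $(-1)^{\abs{S}\abs{X}}$ in a transparent way.
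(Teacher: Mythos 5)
Your plan is essentially the paper's proof: reduce by $C^\infty(\M)$-linearity to $X=\tfrac{\partial}{\partial x_l}$, $S=\underleftarrow{\px^I}$, $\sigma=y^J$, observe that $i_X\delta$ acts as fiberwise contraction, evaluate both sides via Lemma~\ref{Bercy} (both supported on $\delta_{I+e_l,J}$ with coefficient $J!=(I_l+1)\,I!$), and check that the Koszul signs assemble into $(-1)^{\abs{S}\abs{X}}$. The one step you defer --- tracking the signs through $\tfrac{\partial}{\partial y_l}(y^J)$ and through the reordering of $\underleftarrow{\px^I}\odot\tfrac{\partial}{\partial x_l}$ into $\tfrac{\partial}{\partial x_l}\odot\underleftarrow{\px^I}$ --- is exactly the entire content of the paper's computation, so to have a complete proof you should carry it out rather than leave it as a remark.
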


\begin{proof}
It suffices to prove the relation for 
$S=\underleftarrow{\partial_x^I}$, $\sigma=y^J$, 
and $X=\frac{\partial}{\partial x_l}$. 
We have 
\begin{align*}
\contraction{\underleftarrow{\partial_x^I}}
{i_{\frac{\partial}{\partial x_l}}\delta(y^J)} 
=& 
\contraction{\underleftarrow{\partial_x^I}}
{\sum_{k=1}^n i_{\frac{\partial}{\partial x_l}} 
dx_k\ \frac{\partial}{\partial y_k}(y^J)} \\ 
=& \contraction{\underleftarrow{\partial_x^I}}
{\frac{\partial}{\partial y_l}(y^J)} \\ 
=& \contraction{\underleftarrow{\partial_x^I}}
{(-1)^{\abs{y_l}\abs{y^{\truncation{<l}J}}} 
\ j_l\ y^{J-e^l}} \\ 
=& (-1)^{\abs{y_l}\abs{y^{\truncation{<l}J}}} 
\ j_l\ I!\ \delta_{I,J-e^l} \\
=& (-1)^{\abs{y_l}\abs{y^{\truncation{<l}J}}}\ J!\ 
\delta_{I+e_l,J} \\ 
=& (-1)^{\abs{\frac{\partial}{\partial x_l}}
\abs{\partial_x^{\truncation{<l}I}}}
\contraction{\underleftarrow{\partial_x^{I+e_l}}}{y^J} \\ 
=& \contraction{\underleftarrow{\partial_x^I}\odot
\frac{\partial}{\partial x_l}}{y^J} \\ 
=& (-1)^{\abs{\underleftarrow{\partial_x^I}}
\abs{\frac{\partial}{\partial x_l}}} 
\contraction{\frac{\partial}{\partial x_l}\odot \underleftarrow{\partial_x^I}}{y^J}
.\qedhere\end{align*}
\end{proof}

\begin{proposition} 
The operator $i_X\nA$ is a derivation of the algebra $\Gamma\big(\hat{S}(T\dual_\M)\big)$ for every 
$X\in\sections{T_\M}$. 
\end{proposition}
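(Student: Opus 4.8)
The plan is to exploit the duality principle that the transpose of a coderivation is a derivation. According to~\eqref{Rambuteau}, the operator $i_X\nA$ is, up to the sign twist $(-1)^{\abs{S}\abs{X}}$, precisely the transpose of the operator $i_X\nP$, which Proposition~\ref{Wagram} identifies as a coderivation of the coalgebra $\sections{S T_\M}$. Transposing the coderivation property through the nondegenerate pairing $\contraction{-}{-}$ should therefore yield the Leibniz rule for $i_X\nA$ directly.

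First I would record the compatibility of the duality pairing $\contraction{-}{-}$ with the two comultiplications, namely that the symmetric product $\odot$ on $\sections{\SM}$ is dual to the comultiplication $\Delta$ on $\sections{S T_\M}$:
\[ \contraction{S}{\sigma\odot\sigma'}=\sum_{(S)}(-1)^{\abs{S''}\abs{\sigma}}\contraction{S'}{\sigma}\cdot\contraction{S''}{\sigma'} , \]
where $\Delta(S)=\sum_{(S)}S'\otimes S''$ in Sweedler notation. This is a standard feature of the pairing between a symmetric algebra and its cofree symmetric coalgebra; it may be checked directly on the basis elements $\underleftarrow{\partial_x^I}$ and $y^J$ by combining Lemma~\ref{Bercy} with the explicit formula for $\Delta$ recorded before Theorem~\ref{Vavin}.

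Next I would fix $X\in\sections{T_\M}$ together with homogeneous elements $\sigma,\sigma'\in\sections{\SM}$ and, for an arbitrary homogeneous $S\in\sections{S T_\M}$, start from~\eqref{Rambuteau} to rewrite $\contraction{i_X\nP(S)}{\sigma\odot\sigma'}$ by means of the duality above. Substituting the coderivation identity
\[ \Delta\big(i_X\nP(S)\big)=\sum_{(S)}\Big(i_X\nP(S')\otimes S''+(-1)^{\abs{X}\abs{S'}}S'\otimes i_X\nP(S'')\Big) \]
furnished by Proposition~\ref{Wagram} (recall that $i_X\nP$ is of degree $-\abs{X}$) and dualizing each of the two resulting terms back through~\eqref{Rambuteau}, I expect to arrive at
\[ \contraction{S}{i_X\nA(\sigma\odot\sigma')}=\contraction{S}{i_X\nA(\sigma)\odot\sigma'+(-1)^{\abs{X}\abs{\sigma}}\sigma\odot i_X\nA(\sigma')} . \]
Since $S$ is arbitrary and the pairing is nondegenerate, this establishes the graded Leibniz rule
\[ i_X\nA(\sigma\odot\sigma')=i_X\nA(\sigma)\odot\sigma'+(-1)^{\abs{X}\abs{\sigma}}\sigma\odot i_X\nA(\sigma') , \]
which is exactly the assertion that $i_X\nA$ is a derivation of degree $-\abs{X}$.

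The only genuine difficulty is the sign bookkeeping. The hard part will be to verify that the twist $(-1)^{\abs{S}\abs{X}}$ in~\eqref{Rambuteau}, the sign $(-1)^{\abs{X}\abs{S'}}$ in the coderivation formula, and the Koszul signs produced when pairing $S'\otimes S''$ against $\sigma\otimes\sigma'$ combine so that precisely the Leibniz sign $(-1)^{\abs{X}\abs{\sigma}}$ survives. Writing $\abs{S}=\abs{S'}+\abs{S''}$ and cancelling the superfluous factors in pairs, one checks that all extraneous signs disappear; this is the computation I would carry out in detail to complete the argument.
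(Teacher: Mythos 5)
Your argument is exactly the paper's: the paper's proof is the one-line observation that $i_X\nA$ is the transpose of the coderivation $i_X\nP$ (Proposition~\ref{Wagram}) through the pairing of Equation~\eqref{Rambuteau}, hence a derivation of the dual algebra. Your proposal simply spells out the dualization and sign bookkeeping that the paper leaves implicit, and it is correct.
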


\begin{proof}
The result follows immediately from Proposition~\ref{Wagram} since the algebra $\Gamma\big(\hat{S}(T\dual_\M)\big)$ is dual to the coalgebra 
$\Gamma(S T_\M)$ and $i_X\nA$ is the transpose of 
$i_X\nP$ according to Equation~\eqref{Rambuteau}. 
\end{proof}

Hence $\nA$ may be regarded as an element of 
$\Omega^1\big(\M,\hat{S}(T\dual_\M)\otimes T_\M\big)$. 

\begin{proposition}
If $T^\nabla=0$, then 
$\nA\in\Omega^1\big(\M,\hat{S}^{\geqslant 2}(T\dual_\M)\otimes T_\M\big)$.
\end{proposition}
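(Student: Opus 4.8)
The plan is to reduce the statement to Proposition~\ref{Picpus} by exploiting the fact that, by its very construction~\eqref{Rambuteau}, the form $\nA$ is the transpose of $\nP$. Recall that, having already established that each $i_X\nA$ is a derivation of the algebra $\sections{\SM}$, the form $\nA$ is completely determined by the restriction of the operators $i_X\nA$ to the generators $\sections{T\dual_\M}=\sections{S^1(T\dual_\M)}$. Concretely, writing $\nA=\sum_i dx_i\otimes\sum_{J,k}A^i_{J,k}\,y^J\tfrac{\partial}{\partial y_k}$, the derivation $i_{\frac{\partial}{\partial x_i}}\nA$ sends a generator $y_l$ to $\sum_J A^i_{J,l}\,y^J$, so the symmetric degree carried by $\nA$ in the $\SM$ factor equals the symmetric degree of $i_X\nA$ evaluated on generators. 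Hence the assertion $\nA\in\Omega^1\big(\M,\hat{S}^{\geqslant 2}(T\dual_\M)\otimes T_\M\big)$ is equivalent to the claim that $i_X\nA(\sigma)\in\sections{\hat{S}^{\geqslant 2}(T\dual_\M)}$ for every homogeneous $X\in\sections{T_\M}$ and every $\sigma\in\sections{S^1(T\dual_\M)}$.

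To prove this claim, I would detect the low symmetric-degree components of $i_X\nA(\sigma)$ by means of the duality pairing. By Lemma~\ref{Bercy}, the pairing between $\sections{S^p(T_\M)}$ and $\sections{S^p(T\dual_\M)}$ is non-degenerate (the $\underleftarrow{\partial_x^I}$ with $\abs{I}=p$ form a local frame dual, up to the factor $I!$, to the $y^I$); since the pairing is moreover degree-matching, the $S^0$- and $S^1$-components of $i_X\nA(\sigma)$ are detected by pairing against $\sections{S^{\leqslant 1}T_\M}$. Thus it suffices to show that $\contraction{S}{i_X\nA(\sigma)}=0$ for all $S\in\sections{S^{\leqslant 1}T_\M}$, which forces $i_X\nA(\sigma)\in\sections{\hat{S}^{\geqslant 2}(T\dual_\M)}$. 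Using the defining relation~\eqref{Rambuteau}, for any such $S$ we have
\[ \contraction{S}{i_X\nA(\sigma)}=(-1)^{\abs{S}\abs{X}}\contraction{i_X\nP(S)}{\sigma} .\]

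The key input is now Proposition~\ref{Picpus}: because $\nabla$ is torsionfree, $\nP(S)=0$ for all $S\in\sections{S^{\leqslant 1}T_\M}$, whence $i_X\nP(S)=0$ and the right-hand side above vanishes. This yields exactly the required vanishing of the pairing, completing the proof. I expect the only genuinely delicate point to be the bookkeeping in the first step, namely identifying the symmetric degree of the element $\nA\in\hat{S}(T\dual_\M)\otimes T_\M$ with the symmetric degree of the derivations $i_X\nA$ on degree-one generators, while keeping the Koszul signs in~\eqref{Rambuteau} consistent; once this translation is in place, invoking non-degeneracy (Lemma~\ref{Bercy}) and the vanishing (Proposition~\ref{Picpus}) is immediate.
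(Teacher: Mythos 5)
Your proof is correct and follows essentially the same route as the paper's: both arguments use that $i_X\nA$ is a derivation and hence determined by its values on the generators $y_k$, then identify the low symmetric-degree components of $i_X\nA(y_k)$ via the duality pairing of Lemma~\ref{Bercy}, transpose through Equation~\eqref{Rambuteau} to a pairing against $i_X\nP$ on $\sections{S^{\leqslant 1}T_\M}$, and kill it with Proposition~\ref{Picpus}. The only difference is presentational: the paper writes out the expansion of $i_X\nA(y_k)$ explicitly in the monomial basis $y^I$, whereas you phrase the same computation as a non-degeneracy argument.
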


\begin{proof} 
Let $(x_i)_{i\in\{1,\dots,n\}}$ be a set of local coordinates on $\M$ and let $(y_j)_{j\in\{1,\dots,n\}}$ be the induced local frame of $T_\M\dual$ regarded as fiberwise linear functions on $T_\M$.
Since $i_X\nA$ is a derivation of the algebra
$\Gamma\big(\hat{S}(T\dual_\M)\big)$, 
which is generated by $y_1,\dots,y_n$, 
we have \[ i_X\nA=\sum_{k=1}^n i_X\nA(y_k) \frac{\partial}{\partial y_k} ,\] 
with 
\begin{align*} 
i_X\nA(y_k) =& \sum_{I\in\NO^n} \frac{1}{I!} 
(1\otimes y^I) \cdot (\contraction{\underleftarrow{\partial_x^I}}{i_X\nA(y_k)} \otimes 1)
&&\text{by Lemma~\ref{Bercy},} \\ 
=& \sum_{I\in\NO^n}  \frac{(-1)^{\abs{\underleftarrow{\partial_x^I}}\abs{X}}}{I!} 
(1\otimes y^I) \cdot 
(\contraction{i_X\nP\Big(\underleftarrow{\partial_x^I}\Big)}{y_k}  
\otimes 1)
&&\text{by Equation~\eqref{Rambuteau}.} 
\end{align*}
Since $T^\nabla=0$, it follows from Proposition~\ref{Picpus} that $i_X\nA\in\Gamma\big(\hat{S}^{\geqslant 2}(T\dual_\M)\otimes T_\M\big)$ as $\nP\Big(\underleftarrow{\partial_x^I}\Big)=0$ 
for $\abs{I}\leqslant 1$. 
\end{proof}

\begin{proposition}\label{Telegraphe}
$\delta\inv(\nA)=0$
\end{proposition}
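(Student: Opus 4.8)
The plan is to prove $\delta^{-1}(\nA)=0$ by unwinding the definitions of both operators and reducing everything to the duality pairing $\contraction{-}{-}$, then invoking Proposition~\ref{Tolbiac}. First I would recall the explicit local expressions. Since $i_X\nA$ is a derivation of $\Gamma\big(\hat{S}(T\dual_\M)\big)$, we may write $\nA=\sum_{i,k} dx_i\otimes \nA_{i,k}\frac{\partial}{\partial y_k}$ where the coefficient $\nA_{i,k}\in\Gamma\big(\hat{S}^{\geqslant 2}(T\dual_\M)\big)$ is determined by $\nA_{i,k}=i_{\partial_{x_i}}\nA(y_k)$, and similarly $\delta^{-1}=\frac{1}{p+q}\sum_l i_{\partial_{x_l}}\otimes y_l$ acts on $\Omega^1(\M,\hat S(T\dual_\M)\otimes T_\M)$ by contracting away the one $dx_i$ and symmetrically multiplying by $y_l$. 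So computing $\delta^{-1}(\nA)$ amounts to contracting the $dx_i$-leg of $\nA$ against the $\partial_{x_l}$ direction, producing an element of $\Omega^0(\M,\hat S(T\dual_\M)\otimes T_\M)$, and the claim is that this vanishes.

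The key reduction is to test this vanishing against the pairing with symmetric powers of vector fields, since $\Gamma\big(\hat S(T\dual_\M)\big)$ is the dual of $\Gamma(S T_\M)$ and the pairing separates points degree by degree. By Lemma~\ref{Bercy} it suffices to pair the relevant coefficient against $\underleftarrow{\partial_x^I}$ for all multi-indices $I$. Using the defining relation~\eqref{Rambuteau} of $\nA$ in terms of $\nP$, each contraction $\contraction{\underleftarrow{\partial_x^I}}{i_{\partial_{x_i}}\nA(y_k)}$ converts, up to a Koszul sign, into $\contraction{i_{\partial_{x_i}}\nP\big(\underleftarrow{\partial_x^I}\big)}{y_k}$. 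The operator $\delta^{-1}$, through its factor $y_l$ and the summation over $l$, will reassemble precisely the alternating sum of contractions $\sum_l \contraction{i_{\partial_{x_l}}\nP\big(\underleftarrow{\partial_x^I}\big)}{\cdots}$ that appears in Proposition~\ref{Tolbiac}. The strategy is therefore to rewrite $\contraction{S}{i_X\delta^{-1}(\nA)}$ entirely in terms of $\sum_k \epsilon\, i_{X_k}\nP(X_0\odot\cdots\odot\widehat{X_k}\odot\cdots\odot X_n)$ evaluated on a monomial $\underleftarrow{\partial_x^I}$, which Proposition~\ref{Tolbiac} tells us is zero.

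Concretely, I would fix a pairing element $S$ and a vector field $X$, expand $\contraction{S}{i_X\delta^{-1}(\nA)}$, pull the $y_l$-multiplication across the pairing using the adjointness of $\odot$-multiplication and contraction (the same mechanism exploited in Lemma~\ref{Goncourt}, where $\contraction{S}{i_X\delta(\sigma)}=(-1)^{\abs{S}\abs{X}}\contraction{X\odot S}{\sigma}$ exhibits $\delta$ as dual to $\odot$-multiplication), and thereby turn the $y_l$-factor of $\delta^{-1}$ into an $i_{\partial_{x_l}}$ acting on the $S$-slot. After transposing via~\eqref{Rambuteau}, the $\partial_{x_l}$-contraction lands on the $\nP$ side and produces exactly the symmetrized sum of Proposition~\ref{Tolbiac}, forcing the total to vanish.

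The main obstacle I anticipate is bookkeeping of the Koszul signs and of the numerical normalization $\frac{1}{p+q}$ in $\delta^{-1}$: one must check that the coefficients combine so that the alternating sum produced is \emph{exactly} the one appearing in Proposition~\ref{Tolbiac}, with matching signs $\epsilon$ rather than a sign-twisted variant that would fail to vanish. Matching the degree-dependent factor $\frac{1}{p+q}$ against the way the homogeneous degree of $\underleftarrow{\partial_x^I}$ enters the pairing will require care, but since the identity of Proposition~\ref{Tolbiac} holds identically in each homogeneous degree, this normalization factor is a nonzero scalar in each degree and does not affect vanishing. Once the signs are reconciled, the conclusion $\delta^{-1}(\nA)=0$ follows immediately.
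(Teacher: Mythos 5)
Your proposal is correct and follows essentially the same route as the paper: expand $\nA$ in local coordinates via the pairing with the monomials $\underleftarrow{\partial_x^I}$ (Lemma~\ref{Bercy}), transpose through Equation~\eqref{Rambuteau} to convert contractions with $\nA$ into contractions with $\nP$, and observe that the $y_l$-factor of $\delta\inv$ together with the sum over $l$ reassembles (after the reindexing $M=J+e_l$, which produces the multiplicities $m_l$) exactly the vanishing sum of Proposition~\ref{Tolbiac}. Your remark that the degree-dependent normalization of $\delta\inv$ is a nonzero scalar in each homogeneous degree and hence irrelevant to the vanishing is also the correct way to dispose of that factor.
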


\begin{proof}
Let $(x_i)_{i\in\{1,\dots,n\}}$ be a set of local coordinates on $\M$ and let $(y_j)_{j\in\{1,\dots,n\}}$ be the induced local frame of $T_\M\dual$ regarded as fiberwise linear functions on $T_\M$.

From \[ \nA=\sum_{k=1}^n \sum_{J\in\NO^n} \frac{1}{J!} \ (1\otimes y^J) 
\cdot ( \contraction{\underleftarrow{\partial_x^J}}{\nA(y_k)} \otimes \frac{\partial}{\partial y_k})  ,\]  
we obtain 
\begin{align*}
\delta\inv(\nA) =& 
\sum_{k=1}^n \sum_{J\in\NO^n} \sum_{l=1}^n \frac{1}{J!} 
\ y_l y^J \contraction{\underleftarrow{\partial_x^J}}
{i_{\frac{\partial}{\partial x_l}}\nA(y_k)} 
\frac{\partial}{\partial y_k} 
\\ =& 
\sum_{k=1}^n \sum_{J\in\NO^n} \sum_{l=1}^n \frac{1}{J!} 
(-1)^{\abs{y_l}\abs{y^{\truncation{\leqslant l}J}}} y^{J+e_l}
(-1)^{\abs{\frac{\partial}{\partial x_l}}
\abs{\partial_x^J}} 
\contraction{i_{\frac{\partial}{\partial x_l}}\nP
\Big(\underleftarrow{\partial_x^J}\Big)}{y_k} 
\frac{\partial}{\partial y_k} \\ 
=& \sum_{k=1}^n \sum_{J\in\NO^n} \sum_{l=1}^n \frac{1}{J!} 
\ y^{J+e_l}
(-1)^{\abs{\frac{\partial}{\partial x_l}}
\abs{\partial_x^{\truncation{>l}J}}} 
\contraction{i_{\frac{\partial}{\partial x_l}}\nP
\Big(\underleftarrow{\partial_x^J}\Big)}{y_k} 
\frac{\partial}{\partial y_k} \\ 
=& \sum_{k=1}^n \sum_{M\in\NO^n} \frac{1}{M!} 
\ y^M \contraction{ \sum_{l=1}^n m_l 
(-1)^{\abs{\frac{\partial}{\partial x_l}}
\abs{\partial_x^{\truncation{>l}M}}} 
i_{\frac{\partial}{\partial x_l}}\nP
\Big(\underleftarrow{\partial_x^{M-e_l}}\Big)}{y_k} 
\frac{\partial}{\partial y_k} 
.\end{align*}
It follows directly from Proposition~\ref{Tolbiac} that  
\[ \sum_{l=1}^n m_l \ (-1)^{\abs{\frac{\partial}{\partial x_l}}
\abs{\partial_x^{\truncation{>l}M}}} \ 
i_{\frac{\partial}{\partial x_l}}\nP\Big(\underleftarrow{\partial_x^{M-e_l}}\Big) =0 \]
for every $M=(m_1,\dots,m_n)\in\NO^n$. 
\end{proof}

\begin{proof}[Proof of Theorem~\ref{Bastille}] 
The connections $\nabla$ and $\cntn$ defined on $S(T_\M)$ 
induce connections on the dual bundle $\hat{S}(T\dual_\M)$: 
\[ \contraction{\cntn_X S}{\sigma}+
(-1)^{\abs{X}\abs{S}}
\contraction{S}{\cntn_X \sigma} = 
X\big(\contraction{S}{\sigma}\big)=
\contraction{\nabla_X S}{\sigma}+
(-1)^{\abs{X}\abs{S}}
\contraction{S}{\nabla_X \sigma} .\]
Therefore, we obtain 
\begin{align*}
\contraction{\cntn_X S-\nabla_X S}{\sigma} &=
(-1)^{\abs{X}\abs{S}}
\contraction{S}{\nabla_X \sigma-\cntn_X \sigma} \\ 
\contraction{X\odot S+i_X\nP(S)}{\sigma} &= 
(-1)^{\abs{X}\abs{S}}
\contraction{S}{i_X\big(\coder\sigma-\ds\sigma\big)} \\ 
\intertext{and, making use of Lemma~\ref{Goncourt} 
and Equation~\eqref{Rambuteau},} 
\contraction{S}{i_X\big(\delta\sigma
+\nA\sigma\big)} &= 
\contraction{S}{i_X\big(\coder\sigma-\ds\sigma\big)} 
\end{align*}
or, equivalently, \[ \ds=-\delta+\coder-\nA .\] 
Since $\delta\inv(\nA)=0$ (Proposition~\ref{Telegraphe}) 
and $\ds\circ\ds=0$ (Proposition~\ref{Temple}), 
Theorem~\ref{Concorde} asserts that  
$\Anabla=-\nA$ and $\DD=\ds$. 
\end{proof}

\begin{corollary}
$\Anabla=-\nA$
\end{corollary}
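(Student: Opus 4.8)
The plan is to read this identity off from the uniqueness clause of Proposition~\ref{Concorde}, exactly as already noted at the very end of the proof of Theorem~\ref{Bastille}. That proof establishes, by passing to the duality pairing and invoking Lemma~\ref{Goncourt} together with Equation~\eqref{Rambuteau}, the operator identity $\ds = -\delta + \coder - \nA$ on $\Omega^\bullet(\M,\SM)$. The entire task therefore reduces to verifying that the one-form $-\nA$ fulfills the three defining requirements of the connection form $\Anabla$ singled out in Proposition~\ref{Concorde}, and then appealing to uniqueness.

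First I would confirm that $-\nA$ lies in $\Omega^1\big(\M,\hat{S}^{\geqslant 2}(T\dual_\M)\otimes T_\M\big)$ and has degree $+1$; this is immediate from the earlier proposition asserting that $\nA\in\Omega^1\big(\M,\hat{S}^{\geqslant 2}(T\dual_\M)\otimes T_\M\big)$ under the torsion-free hypothesis. Next, Proposition~\ref{Telegraphe} gives $\delta\inv(\nA)=0$, whence $\delta\inv(-\nA)=0$. Finally, the operator $-\delta+\coder+(-\nA)$ is precisely $\ds$, and it squares to zero because the connection $\cntn$ is flat by Lemma~\ref{Temple}.

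Since Proposition~\ref{Concorde} guarantees a \emph{unique} element of degree $+1$ in $\Omega^1\big(\M,\hat{S}^{\geqslant 2}(T\dual_\M)\otimes T_\M\big)$ satisfying both $\delta\inv(\cdot)=0$ and the flatness of the operator $-\delta+\coder+(\cdot)$, and since $-\nA$ meets all of these, uniqueness forces $\Anabla=-\nA$. No genuine obstacle arises here: the corollary is essentially a repackaging of a line already contained in the proof of Theorem~\ref{Bastille}, and the only point requiring care is to match the membership and degree conditions on $-\nA$ to the precise hypotheses of the uniqueness statement, each of which has been verified in the preceding propositions.
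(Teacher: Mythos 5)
Your proof is correct and follows the paper's own route: the paper likewise concludes $\Anabla=-\nA$ at the end of the proof of Theorem~\ref{Bastille}, from the identity $\ds=-\delta+\coder-\nA$ together with $\delta\inv(\nA)=0$ (Proposition~\ref{Telegraphe}), $\ds\circ\ds=0$ (Lemma~\ref{Temple}), and the uniqueness clause of Proposition~\ref{Concorde}. Your added care in checking that $-\nA$ has the right degree and lies in $\Omega^1\big(\M,\hat{S}^{\geqslant 2}(T\dual_\M)\otimes T_\M\big)$ matches the preceding propositions of the paper exactly.
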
 

\section{Dolgushev--Fedosov resolution via homological perturbation}

The following theorem is an analogue of a theorem of \cite{MR2102846} 
transposed to graded manifolds. 
Dolgushev's proof relies on Fedosov's iteration technique. 
Here we give a proof based on homological perturbation (see Appendix).

\begin{theorem}\label{Trocadero}
Suppose $\M$ is a graded manifold of finite dimension and $\nabla$ is 
a torsion-free connection on $T_\M$. 
Let $\Anabla$, $\DD$, and $\tau$ be the operators arising from $\nabla$ 
as explained in Section~\ref{Washington}. 
Then the cochain complex 
\[ \begin{tikzcd}
\Omega^0(\M,\SM) \arrow{r}{\DD} 
& \Omega^1(\M,\SM) \arrow{r}{\DD} 
& \Omega^2(\M,\SM) \arrow{r}{\DD} 
& \cdots 
\end{tikzcd} \] 
together with the augmentation map $\tau:C^\infty(\M)\to\Omega^0(\M,\SM)$ 
is a resolution of $C^\infty(\M)$. 
Moreover, we have 
\[ \tau = \sum_{n=0}^\infty \big(\delta\inv\circ(\coder+\Anabla)\big)^n\circ i ,\]
where $i$ denotes the canonical inclusion of $C^\infty(\M)$ into $\Omega^0(\M,\SM)$ 
and $\delta\inv$ is the operator defined at the beginning of Section~\ref{Washington}.  
\end{theorem}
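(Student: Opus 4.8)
The plan is to deduce both the resolution and the closed formula for $\tau$ at once from the homological perturbation lemma recalled in the Appendix, applied to the decomposition $\DD=-\delta+\perturbation$ of the Fedosov differential, with $\perturbation:=\coder+\Anabla$ viewed as a perturbation of the unperturbed differential $-\delta$.

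First I would exhibit a contraction of the unperturbed complex $\big(\Omega^\bullet(\M,\SM),-\delta\big)$ onto $C^\infty(\M)$. I take the projection $\sigma$ of \eqref{Passy}, the inclusion $i\colon C^\infty(\M)\hookrightarrow\Omega^0(\M,\SM)$, and the operator $\delta\inv$ as homotopy. A direct bidegree computation with the explicit formulas for $\delta$ and $\delta\inv$ yields the Hodge-type identity $\delta\delta\inv+\delta\inv\delta=\id-i\sigma$ --- both operators vanish on the $(0,0)$-component, and the $\tfrac{1}{p+q}$ normalisation takes care of the remaining bidegrees --- together with the side conditions $(\delta\inv)^2=0$, $\sigma\circ\delta\inv=0$, $\delta\inv\circ i=0$ and $\sigma\circ i=\id$. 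Since $-\delta$ raises form degree while $\delta\inv$ lowers it, $(-\delta)\circ\delta\inv+\delta\inv\circ(-\delta)=i\sigma-\id$, so $(i,\sigma,\delta\inv)$ is a genuine contraction of $\big(\Omega^\bullet(\M,\SM),-\delta\big)$ onto $\big(C^\infty(\M),0\big)$.

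Next I would verify that $\perturbation$ is small enough for the perturbation lemma to converge. The covariant differential $\coder$ preserves the symmetric degree $q$, whereas $\Anabla\in\Omega^1\big(\M,\hat{S}^{\geqslant 2}(T\dual_\M)\otimes T_\M\big)$ raises $q$ by at least $1$; this is the one place where torsion-freeness of $\nabla$ is indispensable, as it is precisely what forces $\Anabla$ to have no $\hat{S}^{\leqslant 1}$-component (Proposition~\ref{Concorde}). As $\delta\inv$ raises $q$ by $1$ and lowers form degree by $1$, the composite $\delta\inv\circ\perturbation$ preserves form degree and raises $q$ by at least $1$; hence $(\delta\inv\circ\perturbation)^n$ raises $q$ by at least $n$ and the series $\sum_{n\geqslant 0}(\delta\inv\circ\perturbation)^n$ converges in the $\mathfrak m$-adic completion $\SM$. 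The perturbation lemma then transfers the contraction to $\big(\Omega^\bullet(\M,\SM),\DD\big)$; the transferred differential on $C^\infty(\M)$ vanishes, since $\perturbation\circ i$ takes values in form degree $1$, the resolvent preserves form degree, and $\sigma$ annihilates all forms of positive degree.

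Finally I would read off the two conclusions. A contraction is in particular a deformation retract, so $H^\bullet\big(\Omega^\bullet(\M,\SM),\DD\big)\cong C^\infty(\M)$, concentrated in degree $0$; together with the injectivity of the transferred inclusion this is exactly the exactness of the augmented complex, i.e.\ that $\DD$ with augmentation $\tau$ resolves $C^\infty(\M)$. The transferred inclusion is $\big(1-\delta\inv\circ\perturbation\big)\inv\circ i=\sum_{n\geqslant 0}(\delta\inv\circ\perturbation)^n\circ i$; it lands in $\Omega^0(\M,\SM)$, is $\DD$-closed (being a chain map out of a complex with zero differential), and satisfies $\sigma\circ\big(\sum_{n\geqslant 0}(\delta\inv\circ\perturbation)^n\big)\circ i=\id$ because the correction terms die under $\sigma\circ\delta\inv=0$. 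By the uniqueness in Proposition~\ref{Voltaire} it must coincide with $\tau$, which proves the displayed formula. The only non-formal steps are the Hodge-type identity with its side conditions and the convergence estimate --- the former a coordinate computation, the latter the point where torsion-freeness enters --- after which the perturbation lemma and the uniqueness of Proposition~\ref{Voltaire} finish the argument mechanically.
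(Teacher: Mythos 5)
Your proposal is correct and follows essentially the same route as the paper: exhibit the filtered contraction $(i,\sigma,\delta\inv)$ of $\big(\Omega^\bullet(\M,\SM),-\delta\big)$ onto $C^\infty(\M)$, treat $\perturbation=\coder+\Anabla$ as a filtration-raising perturbation so the perturbation lemma applies, observe that the transferred differential on $C^\infty(\M)$ vanishes, and identify the transferred inclusion $\sum_{n\geqslant 0}(\delta\inv\perturbation)^n i$ with $\tau$ via the uniqueness in Proposition~\ref{Voltaire}. The only cosmetic difference is that you track convergence through the symmetric degree $q$ after composing with $\delta\inv$, whereas the paper filters by $p+q$ and notes that $\perturbation$ raises that filtration; these are equivalent.
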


\begin{remark}
More precisely, we have a contraction (see Appendix)
\[ \begin{tikzcd} 
\big(\Omega^\bullet(\M,\SM), D\big)
\arrow[rr, shift left, "\sigma"] \arrow[loop left, "h"] 
&& \big(C^\infty(\M),0\big) \arrow[ll, shift left, "\tau"] 
\end{tikzcd} \]
with homotopy operator $h:\Omega^\bullet(\M,\SM)\to\Omega^{\bullet-1}(\M,\SM)$ 
defined by \[ h=\sum_{n=0}^\infty \big(\delta\inv\circ(\coder+\Anabla)\big)^n\circ\delta\inv .\qedhere\]
\end{remark}

\begin{proof}
It is not difficult to check that the cochain complex 
$\big(\Omega^\bullet(\M,\SM),-\delta\big)$ 
deformation retracts onto $C^\infty(\M)$: 
the canonical inclusion 
$i: C^\infty(\M)\to\Omega^0(\M,\SM)$ 
and the linear map  
$\sigma: \Omega^\bullet(\M,\SM) \to C^\infty(\M)$ 
characterized by Equations~\eqref{Passy}
satisfy
\[ \sigma i=\id_{C^\infty(\M)}
\qquad\text{and}\qquad 
\delta\delta\inv+\delta\inv\delta=
\id_{\Omega^\bullet(\M,\SM)}-i\sigma .\]
Furthermore, the maps $\sigma$, $i$, and $\delta\inv$ respect the exhaustive, complete,
descending filtrations on $C^\infty(\M)$ and the complex 
$\big(\Omega^\bullet(\M,\SM),-\delta\big)$ respectively 
defined by \[ \mathcal{F}^m=\begin{cases} 
C^\infty(\M) & \text{if } m\leqslant 0 \\ 
0 & \text{if } m>0 \end{cases} \] 
and \[ \mathscr{F}^m= \prod_{p+q\geqslant m} \Omega^p\big(\M,S^q(T\dual_{\M})\big) .\]

More precisely, the diagram 
\[ \begin{tikzcd} 
0 \arrow{r} & \Omega^0(\M,\SM) \arrow{d}{\sigma} \arrow{r}{-\delta} & 
\Omega^1(\M,\SM)  \arrow{d}{\sigma} \arrow{r}{-\delta} 
\arrow[dashed]{ddl}[near end]{\delta\inv} & 
\Omega^2(\M,\SM) \arrow{d}{\sigma} \arrow{r} 
\arrow[dashed]{ddl}[near end]{\delta\inv} & \cdots \\ 
0 \arrow{r} & C^\infty(\M) \arrow{d}[swap]{i} \arrow{r} & 
0 \arrow{d}[swap]{i} \arrow{r} & 
0 \arrow{d}[swap]{i} \arrow{r} & \cdots \\ 
0 \arrow{r} & \Omega^0(\M,\SM) \arrow{r}{-\delta} & 
\Omega^1(\M,\SM)  \arrow{r}{-\delta} & 
\Omega^2(\M,\SM) \arrow{r} & \cdots 
\end{tikzcd} \] 
is a filtered contraction. 

The operator $\perturbation:=\coder+\Anabla$ is a perturbation   
of the differential $-\delta$ on $\Omega^\bullet(\M,\SM)$, 
for $\DD=-\delta+\perturbation$ satisfies $\DD\circ\DD=0$ and 
$\perturbation(\mathscr{F}^m)\subset\mathscr{F}^{m+1}$ for all $m\in\NO$. 

Hence homological perturbation (see Appendix) yields the contraction 
\[ \begin{tikzcd} 
0 \arrow{r} & \Omega^0(\M,\SM) \arrow{d}{\breve{\sigma}} \arrow{r}{\DD} & 
\Omega^1(\M,\SM) \arrow{d}{\breve{\sigma}} \arrow{r}{\DD} 
\arrow[dashed]{ddl}[near end]{h} & 
\Omega^2(\M,\SM) \arrow{d}{\breve{\sigma}} \arrow{r} 
\arrow[dashed]{ddl}[near end]{h} & \cdots \\ 
0 \arrow{r} & C^\infty(\M) \arrow{d}[swap]{\breve{\imath}} \arrow{r} & 
0 \arrow{d}[swap]{\breve{\imath}} \arrow{r} & 
0 \arrow{d}[swap]{\breve{\imath}} \arrow{r} & \cdots \\ 
0 \arrow{r} & \Omega^0(\M,\SM) \arrow{r}{\DD} & 
\Omega^1(\M,\SM)  \arrow{r}{\DD} & 
\Omega^2(\M,\SM) \arrow{r} & \cdots 
\end{tikzcd} \] 
where 
\begin{gather*} 
\breve{\sigma}=\sum_{k=0}^\infty\sigma(\partial\delta\inv)^k \qquad \qquad
\breve{\imath}=\sum_{k=0}^{\infty}(\delta\inv\partial)^k i \\ 
\intertext{and} h=\sum_{k=0}^{\infty}(\delta\inv\partial)^k \delta\inv 
.\end{gather*} 
In particular, we have $\DD\breve{\imath}=0$ and, since $\sigma\delta\inv=0$,
\[\sigma\breve{\imath}=\sigma \sum_{n=0}^{\infty}(\delta\inv\partial)^n i =\sum_{n=0}^{\infty}\sigma(\delta\inv\partial)^n i =\sigma i = \id_{C^\infty(\M)} .\] 
It follows from Proposition~\ref{Voltaire} that \[ \tau = \breve{\imath} = 
\sum_{n=0}^{\infty}(\delta\inv\partial)^n i 
= \sum_{n=0}^{\infty}(\delta\inv(\coder+A))^n i .\]
We note that $\breve{\sigma}=\sigma$ since $\sigma\partial\delta\inv=0$.  
\end{proof}

Recall that a dg-manifold is a graded manifold $\M$ endowed with a vector field $Q$ of degree +1 such that $[Q,Q]=0$. Hence the algebra of functions $C^\infty(\M)$ on a dg-manifold $(\M,Q)$ is a cochain complex with the vector field $Q$ as coboundary operator. 
Two dg-manifolds $(\M_1,Q_1)$ and $(\M_2,Q_2)$ are said to be weakly equivalent if their associated cochain complexes $(C^\infty(\M_1),Q_1)$ and $(C^\infty(\M_2),Q_2)$ are quasi-isomorphic. 
In this terminology, Theorem~\ref{Trocadero} can be rephrased as follows: 
the chain map $\tau$ is a weak equivalence 
of dg-manifolds from $(\M,0)$ to $(T_\M[1] \oplus T_\M,D)$.

\section*{Appendix: homological perturbation}

Roughly speaking, homological perturbation is an algebraic tool which allows us to perturb a deformation retract to another deformation retract. To be precise, we need some technical definitions from homological algebra. We say that a cochain complex $(N,\delta)$ \emph{contracts} onto a cochain complex 
$(M,d)$ if there exists two chain maps $\sigma:N\to M$ and $\tau:M\to N$ 
and an endomorphism $h:N\to N[-1]$ of the graded module $N$ 
satisfying 
\[ \sigma\tau=\id_N, \qquad \tau\sigma-\id_M=h\delta+\delta h \] 
and 
\[ \sigma h=0, \qquad h\tau=0, \qquad h h=0 .\]
If, furthermore, the cochain complexes $N$ and $M$ are filtered and the maps 
$\sigma$, $\tau$, and $h$ preserve the filtration, the contraction is said to be 
filtered~\cite[Section~12]{MR0056295}. 

A filtration $\cdots\subset F_{p-1}N\subset F_pN\subset F_{p+1}N\subset\cdots$ on a cochain complex $N$ is said to be 
exhaustive if $N=\bigcup_p F_p N$ and complete if $N=\varprojlim\frac{N}{F_p N}$. 

A \emph{perturbation} of the differential $\delta$ of a filtered cochain complex 
\[ \begin{tikzcd} \cdots \arrow{r} & N^{n-1} 
\arrow{r}{\delta} & N^{n} \arrow{r}{\delta} & N^{n+1} \arrow{r} & \cdots \end{tikzcd} \] 
is an operator $\partial:F_{\bullet} N \to F_{\bullet-1} N$ lowering the filtration and satisfying \[ (\delta+\partial)(\delta+\partial)=0 \] 
so that $\delta+\partial$ is a new differential on $N$. 

We refer the reader to~\cite[Section~1]{MR1109665} for a brief history of the following proposition. 

\begin{proposition}[Homological Perturbation~\cite{MR0220273}]
Let 
\[ \begin{tikzcd}[column sep=large] 
\cdots \arrow{r} & N^{n-1} \arrow{d}{\sigma} 
\arrow{r}{\delta} & 
N^{n} \arrow{d}{\sigma} \arrow{r}{\delta} 
\arrow[dashed]{ddl}[near end]{h} & 
N^{n+1} \arrow{d}{\sigma} \arrow{r} 
\arrow[dashed]{ddl}[near end]{h} & \cdots \\ 
\cdots \arrow{r} & M^{n-1} \arrow{d}[swap]{\tau} 
\arrow{r}[near start]{d} & 
M^{n} \arrow{d}[swap]{\tau} \arrow{r}[near start]{d} & 
M^{n+1} \arrow{d}[swap]{\tau} \arrow{r} & \cdots \\ 
\cdots \arrow{r} & N^{n-1} \arrow{r}{\delta} & 
N^{n} \arrow{r}{\delta} & 
N^{n+1} \arrow{r} & \cdots 
\end{tikzcd} \] 
be a filtered contraction. 
Given a perturbation $\partial$ of the differential $\delta$ on $N$, 
if the filtrations on $M$ and $N$ are exhaustive and complete, 
then the series 
\begin{gather*}
\vartheta:=\sum_{k=0}^{\infty} \sigma \partial (h\partial)^k\tau = \sum_{k=0}^{\infty}\sigma(\partial h)^k\partial\tau \\ 
\breve{\sigma}:=\sum_{k=0}^{\infty}\sigma(\partial h)^k \\ 
\breve{\tau}:=\sum_{k=0}^{\infty}(h\partial)^k\tau \\ 
\breve{h}:=\sum_{k=0}^{\infty}(h\partial)^k h=\sum_{k=0}^{\infty} h(\partial h)^k
\end{gather*}
converge, $\vartheta$ is a perturbation of the differential $d$ on $M$, 
and 
\[ \begin{tikzcd}[column sep=large] 
\cdots \arrow{r} & N^{n-1} \arrow{d}{\breve{\sigma}} \arrow{r}{\delta+\partial} & 
N^{n} \arrow{d}{\breve{\sigma}} \arrow{r}{\delta+\partial} \arrow[dashed]{ddl}[near end]{\breve{h}} & 
N^{n+1} \arrow{d}{\breve{\sigma}} \arrow{r} 
\arrow[dashed]{ddl}[near end]{\breve{h}} & \cdots \\ 
\cdots \arrow{r} & M^{n-1} \arrow{d}[swap]{\breve{\tau}} \arrow{r}[near start]{d+\vartheta} & 
M^{n} \arrow{d}[swap]{\breve{\tau}} 
\arrow{r}[near start]{d+\vartheta}  & 
M^{n+1} \arrow{d}[swap]{\breve{\tau}} \arrow{r} & \cdots \\ 
\cdots \arrow{r} & N^{n-1} \arrow{r}{\delta+\partial} & 
N^{n} \arrow{r}{\delta+\partial} & 
N^{n+1} \arrow{r} & \cdots 
\end{tikzcd} \] 
constitutes a new filtered contraction. 
\end{proposition}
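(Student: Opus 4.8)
The plan is to adopt the four given series as the \emph{definitions} of $\vartheta$, $\breve\sigma$, $\breve\tau$, $\breve h$ and then to verify, identity by identity, that $(\breve\sigma,\breve\tau,\breve h)$ is a filtered contraction of $(N,\delta+\partial)$ onto $(M,d+\vartheta)$. Before manipulating the series I would first dispatch convergence: since $\sigma$, $\tau$ and $h$ preserve the filtration while $\partial$ strictly lowers it, both composites $h\partial$ and $\partial h$ strictly lower the filtration; hence for any fixed element the tails $\sum_{k\geqslant K}(h\partial)^k$ and $\sum_{k\geqslant K}(\partial h)^k$ land in an arbitrarily deep filtration step, and completeness of the filtration guarantees that all four series converge, while exhaustiveness guarantees they are defined on all of $N$. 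The same estimate shows the resulting operators preserve the filtration, so the output is again a \emph{filtered} contraction.

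Next I would record the bookkeeping identities that reduce every infinite sum to a single resolvent. Writing $A=\sum_{k\geqslant 0}(\partial h)^k$ and $B=\sum_{k\geqslant 0}(h\partial)^k$, one has $\breve\sigma=\sigma A$, $\breve\tau=B\tau$, $\breve h=Bh=hA$ and $\vartheta=\sigma\partial\,\breve\tau=\breve\sigma\,\partial\tau$, together with the recursions $A=\id+A\partial h=\id+\partial h\,A$ and $B=\id+h\partial\,B=\id+B\,h\partial$, and the commutation $A\partial=\partial B$ (which is just $(\partial h)^k\partial=\partial(h\partial)^k$ summed over $k$). These turn each verification into a finite algebraic manipulation.

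The verifications then proceed using only these recursions, the defining identities of the original contraction ($\sigma\tau=\id$, $\tau\sigma-\id=\delta h+h\delta$, and the side conditions $\sigma h=0$, $h\tau=0$, $h^2=0$) and the master equation $(\delta+\partial)^2=0$, i.e.\ $\delta\partial+\partial\delta+\partial^2=0$. The side conditions are exactly what make the new side conditions collapse: from $\breve\sigma=\sigma+\breve\sigma\partial h$ together with $\sigma h=0$ and $h^2=0$ one gets $\breve\sigma h=0$, hence $\breve\sigma\breve h=\breve\sigma h\,A=0$; symmetrically $\breve\tau=\tau+h\partial\,\breve\tau$ with $h\tau=0$, $h^2=0$ gives $h\,\breve\tau=0$ and thus $\breve h\,\breve\tau=B\,(h\,\breve\tau)=0$; and $\breve h\,\breve h=(Bh)(hA)=B\,h^2\,A=0$. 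Once the chain-map identity $(\delta+\partial)\breve\tau=\breve\tau(d+\vartheta)$ and the normalization $\breve\sigma\breve\tau=\id_M$ are in hand, the fact that $d+\vartheta$ squares to zero is automatic: $\breve\tau(d+\vartheta)^2=(\delta+\partial)^2\breve\tau=0$, and applying $\breve\sigma$ together with $\breve\sigma\breve\tau=\id_M$ gives $(d+\vartheta)^2=0$; that $\vartheta$ lowers the filtration of $M$ is read off directly from $\vartheta=\sigma\partial\,\breve\tau$.

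The main obstacle is the pair of genuinely nontrivial identities — the chain-map property of $\breve\sigma$ and $\breve\tau$ and the homotopy identity $\breve\tau\breve\sigma-\id_N=(\delta+\partial)\breve h+\breve h(\delta+\partial)$ — where $\partial$ and $h$ actually interact. The delicate step is to rewrite each $\delta$ sitting next to an $h$ by substituting $\delta h+h\delta=\tau\sigma-\id$, then use $\sigma h=0$ or $h\tau=0$ to annihilate the unwanted terms and the recursions for $A$ and $B$ to reassemble the remaining series, so that exactly one copy of $\vartheta$ (and no residue) survives; everything else is formal. I would also take care to phrase the convergence argument purely through the strict lowering of the filtration by $\partial$, so that completeness is invoked in the correct direction regardless of the indexing convention.
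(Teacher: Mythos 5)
The paper does not prove this proposition at all: it is quoted as a classical result with a citation to Brown (and a pointer to the literature for its history), so there is no in-paper proof to compare yours against. That said, your sketch is the standard proof of the perturbation lemma and its outline is sound: convergence of the geometric series from exhaustiveness plus completeness of the filtration (since $\partial$ strictly shifts the filtration while $\sigma$, $\tau$, $h$ preserve it), the resolvent recursions $A=\id+\partial h\,A$, $B=\id+h\partial\,B$, $A\partial=\partial B$, the collapse of the new side conditions via $\sigma h=0$, $h\tau=0$, $h^2=0$, the normalization $\breve\sigma\breve\tau=\sigma AB\tau=\sigma\tau=\id_M$ (using $h^2=0$ to kill the cross terms of $AB$), and the derivation of $(d+\vartheta)^2=0$ from the chain-map identity. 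One caution on the part you rightly flag as delicate: in the homotopy and chain-map verifications you cannot simplify products such as $BA$ or push $\partial$ past $\delta$ freely, because $\partial^2\neq 0$ in general --- only $(\delta+\partial)^2=0$ holds --- so the substitution $\delta h+h\delta=\tau\sigma-\id$ must be combined with $\delta\partial+\partial\delta=-\partial^2$ in a telescoping argument before the series reassemble. Your plan accommodates this, but it is the one place where the sketch would need to be expanded into an actual computation. (Minor point: the paper's definition of a contraction in the Appendix has the subscripts on $\id$ swapped, i.e.\ it should read $\sigma\tau=\id_M$ and $\tau\sigma-\id_N=h\delta+\delta h$; you implicitly use the correct version.)
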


\bibliographystyle{amsplain}
\bibliography{references}

\def\cprime{$'$}
\providecommand{\bysame}{\leavevmode\hbox to3em{\hrulefill}\thinspace}
\providecommand{\MR}{\relax\ifhmode\unskip\space\fi MR }
\providecommand{\MRhref}[2]{%
  \href{http://www.ams.org/mathscinet-getitem?mr=#1}{#2}
}
\providecommand{\href}[2]{#2}
\begin{thebibliography}{10}

\bibitem{MR0496157}
F.~Bayen, M.~Flato, C.~Fronsdal, A.~Lichnerowicz, and D.~Sternheimer,
  \emph{Deformation theory and quantization. {I}. {D}eformations of symplectic
  structures}, Ann. Physics \textbf{111} (1978), no.~1, 61--110. \MR{0496157
  (58 \#14737a)}

\bibitem{MR0496158}
\bysame, \emph{Deformation theory and quantization. {II}. {P}hysical
  applications}, Ann. Physics \textbf{111} (1978), no.~1, 111--151. \MR{0496158
  (58 \#14737b)}

\bibitem{MR0220273}
Ronald Brown, \emph{The twisted {E}ilenberg-{Z}ilber theorem}, Simposio di
  {T}opologia ({M}essina, 1964), Edizioni Oderisi, Gubbio, 1965, pp.~33--37.
  \MR{0220273 (36 \#3339)}

\bibitem{MR2304327}
Alberto~S. Cattaneo and Giovanni Felder, \emph{Relative formality theorem and
  quantisation of coisotropic submanifolds}, Adv. Math. \textbf{208} (2007),
  no.~2, 521--548. \MR{2304327}

\bibitem{MR2102846}
Vasiliy Dolgushev, \emph{Covariant and equivariant formality theorems}, Adv.
  Math. \textbf{191} (2005), no.~1, 147--177. \MR{2102846 (2006c:53101)}

\bibitem{MR0056295}
Samuel Eilenberg and Saunders Mac~Lane, \emph{On the groups of {$H(\Pi,n)$}.
  {I}}, Ann. of Math. (2) \textbf{58} (1953), 55--106. \MR{0056295 (15,54b)}

\bibitem{MR1327535}
Claudio Emmrich and Alan Weinstein, \emph{The differential geometry of
  {F}edosov's quantization}, Lie theory and geometry, Progr. Math., vol. 123,
  Birkh\"auser Boston, Boston, MA, 1994, pp.~217--239. \MR{1327535}

\bibitem{MR1293654}
Boris~V. Fedosov, \emph{A simple geometrical construction of deformation
  quantization}, J. Differential Geom. \textbf{40} (1994), no.~2, 213--238.
  \MR{1293654 (95h:58062)}

\bibitem{MR1109665}
Johannes Huebschmann and Tornike Kadeishvili, \emph{Small models for chain
  algebras}, Math. Z. \textbf{207} (1991), no.~2, 245--280. \MR{1109665
  (92f:55029)}

\bibitem{MR2062626}
Maxim Kontsevich, \emph{Deformation quantization of {P}oisson manifolds}, Lett.
  Math. Phys. \textbf{66} (2003), no.~3, 157--216. \MR{2062626 (2005i:53122)}

\bibitem{1408.2903}
Camille {Laurent-Gengoux}, Mathieu {Sti{\'e}non}, and Ping {Xu},
  \emph{{Kapranov dg-manifolds and Poincar\'e--Birkhoff--Witt isomorphisms}},
  ArXiv e-prints (2014), \texttt{arXiv:1408.2903}.

\bibitem{math/0605356}
Rajan~A. {Mehta}, \emph{{Supergroupoids, double structures, and equivariant
  cohomology}}, ArXiv Mathematics e-prints (2006), \texttt{arXiv:math/0605356}.

\bibitem{MR3319134}
Rajan~A. Mehta, Mathieu Sti{\'e}non, and Ping Xu, \emph{The {A}tiyah class of a
  dg-vector bundle}, C. R. Math. Acad. Sci. Paris \textbf{353} (2015), no.~4,
  357--362. \MR{3319134}

\bibitem{MR2653938}
Ieke Moerdijk and Janez Mr{\v{c}}un, \emph{On the universal enveloping algebra
  of a {L}ie algebroid}, Proc. Amer. Math. Soc. \textbf{138} (2010), no.~9,
  3135--3145. \MR{2653938}

\bibitem{MR1687747}
Victor Nistor, Alan Weinstein, and Ping Xu, \emph{Pseudodifferential operators
  on differential groupoids}, Pacific J. Math. \textbf{189} (1999), no.~1,
  117--152. \MR{1687747 (2000c:58036)}

\bibitem{MR0154906}
George~S. Rinehart, \emph{Differential forms on general commutative algebras},
  Trans. Amer. Math. Soc. \textbf{108} (1963), 195--222. \MR{0154906 (27
  \#4850)}

\bibitem{MR1815717}
Ping Xu, \emph{Quantum groupoids}, Comm. Math. Phys. \textbf{216} (2001),
  no.~3, 539--581. \MR{1815717 (2002f:17033)}

\end{thebibliography}

\end{document}